\newtheorem{remark}{Remark}
\newtheorem{assum}{Assumption}
\newcommand{\R}{\mathbb{R}}
\newcommand{\C}{\mathbb{C}}
\newcommand{\vecB}{{\boldsymbol{B}}}
\newcommand{\vecI}{{\boldsymbol{I}}}
\newcommand{\vecE}{{\boldsymbol{E}}}
\newcommand{\vecf}{{\boldsymbol{f}}}
\newcommand{\vecg}{{\boldsymbol{g}}}
\newcommand{\vech}{{\boldsymbol{h}}}
\newcommand{\vecH}{{\boldsymbol{H}}}
\newcommand{\vecu}{{\boldsymbol{u}}}
\newcommand{\vecw}{  {\boldsymbol{w}} }
\newcommand{\vecv}{  {\boldsymbol{v}}  }
\newcommand{\vecp}{  {\boldsymbol{p}}  }
\newcommand{\vecq}{  {\boldsymbol{q}}  }
\newcommand{\vecG}{{\boldsymbol{G}}}
\newcommand{\vecK}{{\boldsymbol{K}}}
\newcommand{\vecL}{{\boldsymbol{L}}}
\newcommand{\vecN}{{\boldsymbol{N}}}
\newcommand{\vecU}{{\boldsymbol{U}}}
\newcommand{\vecS}{{\boldsymbol{S}}}
\newcommand{\bz}{{\boldsymbol{z}}}
\newcommand{\by}{{\boldsymbol{y}}}
\newcommand{\bx}{{\boldsymbol{x}}}
\newcommand{\be}{{\boldsymbol{e}}}
\newcommand{\mdiv}{\mbox{div\,}}
\newcommand{\mcurl}{\mbox{curl\,}}
\begin{document}

\title{The Interior Inverse Electromagnetic Scattering for an Inhomogeneous Cavity}
\author{Fang Zeng\footnotemark[1]~and~Shixu Meng\footnotemark[2]}
\renewcommand{\thefootnote}{\fnsymbol{footnote}}
  \footnotetext[1]{College of Mathematics and Statistics, Chongqing University, Chongqing, 401331, China.  {\tt fzeng@cqu.edu.cn}}
\footnotetext[2]{Institute of Applied Mathematics, Academy of Mathematics and Systems Science, Chinese Academy of Sciences, Beijing, 100190, China.  {\tt shixumeng@amss.ac.cn}}
\maketitle

\begin{abstract}
In this paper we consider the inverse electromagnetic scattering for a cavity surrounded by an inhomogeneous medium in three dimensions. The measurements are scattered wave fields measured on some surface inside the cavity, where such scattered wave fields are due to sources emitted on the same surface. We first prove that the measurements uniquely determine the shape of the cavity, where we make use of a boundary value problem called the exterior transmission problem. We then complete the inverse scattering problem by designing the linear sampling method to reconstruct the cavity. Numerical examples are further provided to illustrate the viability of our algorithm.
\end{abstract}

\begin{keywords}
interior inverse scattering, linear sampling method, exterior transmission problem, Maxwell's equations, cavity
\end{keywords}
\section{Introduction}  \label{Introduction}
Inverse scattering  plays an important role in non-destructive testing, medical imaging, geophysical exploration and numerous problems associated with target identification. { Qualitative methods, such as linear sampling method and factorization method, have played an important role in inverse scattering. For a survey of these methods we refer to \cite{CaCo, CK,kirsch2008factorization}.} There have been recent interests in the inverse scattering problem for cavities using measurements inside \cite{Ik1,
liu2019regularized,L,QCa,QCo2,qu2019shape,sun2016reciprocity}. Such inverse scattering problems have potential applications in  monitoring the structural integrity of the fusion reactor. The measurements can also be used to design eigenvalue problems to detect anomalies in the exterior of a cavity \cite{cogar2018using}. To the author's knowledge, \cite{ZCS} is the first result on the inverse scattering for cavities using electromagnetic waves. In particular, they considered a perfectly conducting cavity and derived the linear sampling method to reconstruct the cavity. In the case that the cavity is penetrable, the inverse scattering problem is more challenging. The acoustic case was first considered in \cite{CaCoMe,MHC}. The aim of this paper is to extend the results obtained in \cite{CaCoMe} to the electromagnetic case. In particular, we consider the inverse electromagnetic scattering for a cavity surrounded by an inhomogeneous medium in three dimensions. The measurements are scattered electric fields measured on some surface inside the cavity, where such scattered electric fields are due to electric dipoles emitted on the same surface. The corresponding inverse problem is to determine the support of the cavity. { We remark that the cavity problems are "transposition" of the typical scattering problems \cite{haddar2002linear,CaCo1,Ch}.}

The first goal of this paper is to prove the uniqueness of the inverse scattering problem. To this end,
we follow the approach made in \cite{CaCo1, CaCoMe, Hah,Ch}. Note that this approach allows us to consider a general inhomogeneous medium since we avoid constructing the Green's function for the background media. To analyze this method, we propose and make use of a boundary value problem, the so-called {\it exterior transmission problem} for Maxwell's equations. The exterior transmission problem for the acoustic wave equation was proposed in \cite{CaCoMe}.
The second goal is to design a robust imaging algorithm. Here we are particularly interested in the so-called qualitative methods. For a more detailed introduction to qualitative methods we refer to the monographs \cite{CaCo, CK,kirsch2008factorization}. As a first step towards the qualitative methods for our inverse interior electromagnetic scattering problem, we investigate the linear sampling method. It is our further interest to investigate the factorization method, the generalized linear sampling method, and other related robust imaging methods.

To begin with, we introduce the inverse scattering problem in a heuristic setting. We consider the time harmonic electromagnetic scattering in a penetrable cavity $D$ surrounded by inhomogeneous medium in $\R^3$. To be more precise, the medium in $D$ is homogeneous with permittivity $\epsilon_0 \vecI$ and permeability $\mu_0\vecI$. The inhomogeneous medium in the exterior $\mathbb{R}^3 \backslash \overline{D}$ is characterized by matrix-valued permittivity $\boldsymbol{\epsilon}(\bx)$ and permeability $\boldsymbol{\mu}(\bx)$. Furthermore, we assume that $\boldsymbol{\epsilon}(\bx) \equiv \epsilon_0 \vecI$ and $\boldsymbol{\mu}(\bx) \equiv \mu_0\vecI$ outside a large enough ball $B_R$, i.e., the compact support of $\boldsymbol{\epsilon}(\bx) - \epsilon_0 \vecI$ and $\boldsymbol{\mu}(\bx) - \mu_0\vecI$ are contained in $B_R \backslash \overline{D}$. For convenience, we denote the inhomogeneous medium by $D_1 \backslash \overline{D}$. See Fig.~\ref{cavity} for an example of the geometry. {Throughout the paper, we make the following assumptions on $D$, $D_1$, $A$, and $N$. Assume that $D$ and $D_1$ are open, connected regions with $C^2$-smooth boundaries $\partial D$ and $\partial D_1$. Assume that the matrix real-valued functions $A$ and $N$ are symmetric and have $C^1(\overline{D}_1 \backslash D)$ entries. We remark that these assumptions are needed because strong regularities are needed in the technical proof of the uniqueness result (for instance, such regularity assumptions were assumed in \cite{CaCo1} for the uniqueness result in the typical exterior case).}

        \begin{figure}[hb!]
        \centering
\includegraphics[width=0.6422\linewidth]{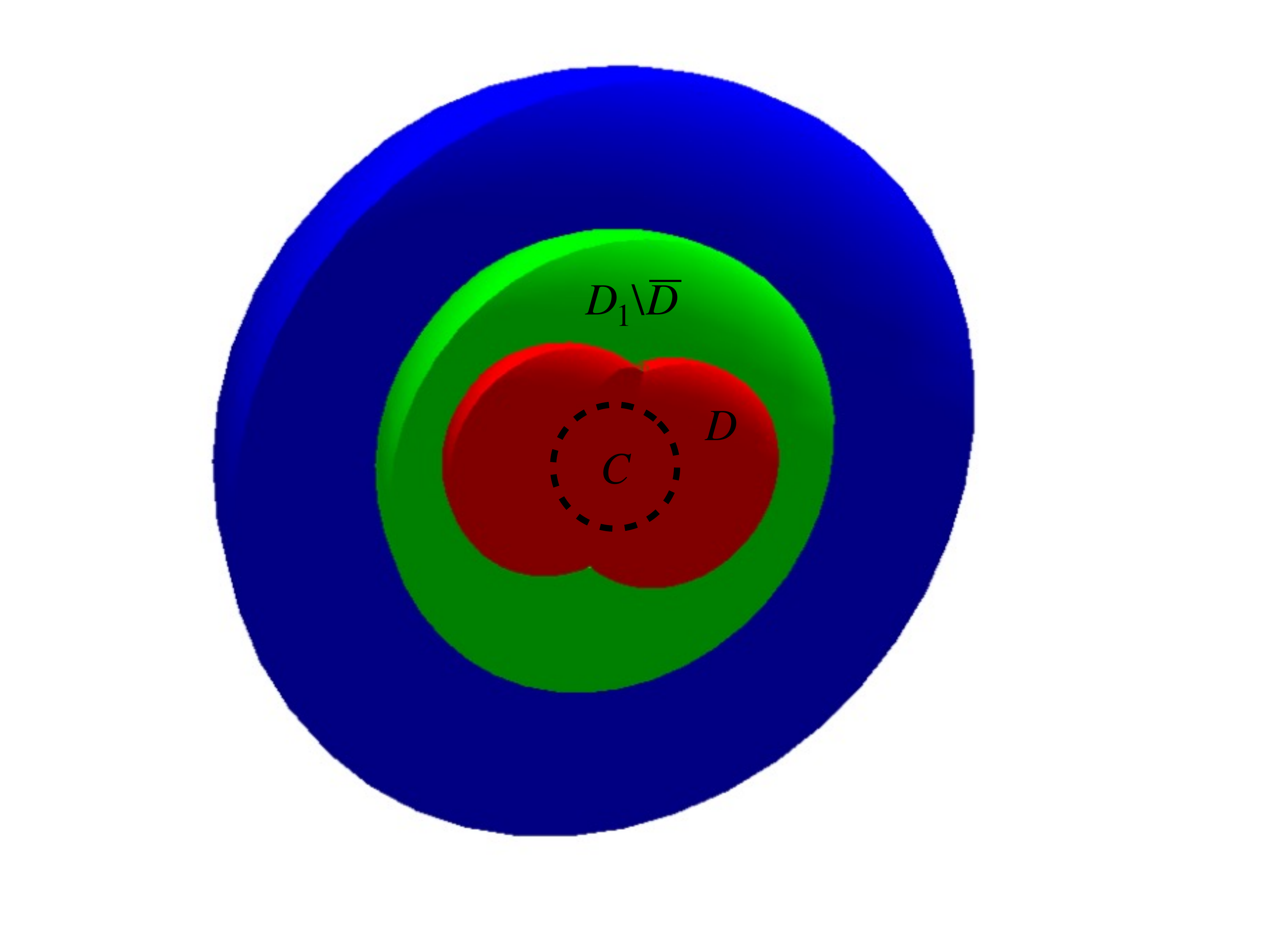}
     \caption{
     \linespread{1}
     An example of the geometry. $D$: cavity. $D_1 \backslash \overline{D}$: inhomogeneous medium. $C$: measurement ball.
     } \label{cavity}
    \end{figure}

For a given fixed frequency $\omega$, let $k:=\sqrt{\epsilon_0\mu_0}~\omega$ be the wave number. Assume the electric incident field is given by an electric dipole at $\by \in D$ with polarization $\vecp \in \R^3$. More specifically, the incident field is represented by
\begin{eqnarray} \label{Intro_Ei}
\vecE^i(\cdot, \by, \vecp):=\vecG (\cdot, \by, \vecp) = \frac{i}{k} \mcurl \mcurl \Phi(\cdot,\by)\vecp,
\end{eqnarray}  
where $\vecG(\cdot, \by, \vecp)$ is the Green's tensor of the free space and $\Phi(\bx,\by)$ is the fundamental solution of the Helmholtz equation in $\mathbb{R}^3$ given by
$$
\Phi(\bx,\by)= \frac{e^{ik|\bx-\by|}}{4 \pi |\bx-\by|}.
$$

Given the incident field $\vecE^i(\cdot, \by, \vecp)$, the corresponding direct scattering problem is finding scattered electric field $\vecE^s(\cdot, \by, \vecp)$ and total electric field $\vecE(\cdot, \by, \vecp)$ such that
\begin{eqnarray}
\mcurl^2 \vecE^s - k^2  \vecE^s = 0 \quad &\mbox{in}& \quad D, \label{TotalEqn1} \\
\mcurl (A \mcurl  \vecE) - k^2 N \vecE = 0 \quad &\mbox{in}& \quad \R^3 \backslash \overline{D}, \label{TotalEqn2} \\
\nu \times \vecE - \nu \times \vecE^s = \nu \times \vecE^i \quad &\mbox{on}& \quad \partial D, \label{TotalEqn3} \\
\nu \times  A\mcurl \vecE - \nu \times \mcurl \vecE^s = \nu \times \mcurl\vecE^i \quad &\mbox{on}& \quad \partial D, \label{TotalEqn4} \\
\lim\limits_{|\bx| \to \infty}\left(\mcurl \vecE \times \bx-ik|\bx|\vecE\right)=0. \label{TotalEqn5}
\end{eqnarray}
where $\nu$ denotes the unit outward normal to $\partial D$. Moreover, we denote by $A(\bx) := \mu_0 \boldsymbol{\mu}(\bx)^{-1}$, and $N(x):= \boldsymbol{\epsilon}(\bx)/\epsilon_0$. Outside the inhomogeneity $D_1 \backslash \overline{D}$, $A(\bx)$ and $N(\bx)$ are identity matrices by our assumption on $\boldsymbol{\epsilon}(\bx), \boldsymbol{\mu}(\bx)$. The last equation \eqref{TotalEqn5} is the Silver-M{\"u}ller radiation condition. For a detailed modeling of the propagation of electromagnetic waves, we refer to the monograph \cite{KH}.

Using the fact that $\vecE(\cdot, \by, \vecp)=\vecE^s(\cdot, \by, \vecp)+\vecE^i(\cdot, \by, \vecp)$. We can represent the forward scattering problem \eqref{TotalEqn1}--\eqref{TotalEqn5} in terms of the scattered electric field $\vecE^s$ as
\begin{eqnarray}
\mcurl( A\mcurl \vecE^s) - k^2 N \vecE^s = \mcurl \left( (\boldsymbol{I}-A)\mcurl\vecf \right)  - k^2\left(\boldsymbol{I}- N\right) \vecf  \quad &\mbox{in}& \quad \R^3, \label{Scattered} \\
\lim\limits_{|\bx| \to \infty}\left(\mcurl \vecE^s\times \bx-ik|\bx|\vecE^s\right)=0.\label{SilverMuller}
\end{eqnarray}
where $\vecf= \vecE^i$. Note that the problem  \eqref{Scattered}--\eqref{SilverMuller} can be studied for general $\vecf$.

{Let $\Sigma := \partial C$, where $C$ is a ball inside $D$}. Our measurements are the scattered  electric fields $\vecE^s(\bx, \by, \vecp)$ for all $\bx \in  \Sigma$, $\by \in  \Sigma$ and $\vecp \in \R^3$. The {\bf inverse problem} we considered in this paper is:

\medskip
{\it
Given $\vecE^s(\bx,\by,\vecp)$ for all $\bx \in  \Sigma$, $\by \in  \Sigma$ and $\vecp \in \R^3$,  can we uniquely determine  the boundary $\partial D$? How can we reconstruct $\partial D$?
}

\vspace{1\baselineskip}

The paper is further organized as follows. In Section \ref{Preliminary} we present several preliminary results. In Section \ref{MaxwellETE}, we study the so-called {\it exterior transmission problem} for Maxwell's equations. We show that the exterior transmission problem is well-posed when the wave number $k$ is purely imaginary with sufficiently large amplitude. This allows us, in particular, to prove that the measurements uniquely determine the cavity in Section \ref{Uniqueness}. In section \ref{Uniqueness}, we show the uniqueness based on the exterior transmission problem. This allow the inhomogeneous background medium is general since we avoid constructing Green's function in the proof. In Section \ref{LSM}, we introduce the linear sampling method and design a reconstruction algorithm to reconstruct the cavity. In Section \ref{Numerics}, we provide numerical examples to illustrate the viability of the linear sampling method algorithm. Finally, we conclude with a summary in Section \ref{Discussion}.
\section{Preliminary results} \label{Preliminary}
We first introduce the following functional spaces. For any bounded connected domain $\Omega$ {(where $\Omega$ satisfies the assumptions on $D$, i.e., $\Omega$ has $C^2$-smooth boundaries)}, let $\nu$ denote its outward normal, we set $\vecL^2(\Omega):=L^2(\Omega)^3$,
$\vecH^m(\Omega):=H^m(\Omega)^3$ and
define
\begin{eqnarray*}
\vecH(\mcurl, \Omega)&:=&\left \{  \vecu \in \vecL^2(\Omega):\mcurl \vecu\in \vecL^2(\Omega) \right\}, \\
\vecH(\mdiv, \Omega)&:=&\left \{  \vecu \in \vecL^2(\Omega):\mdiv \vecu\in L^2(\Omega) \right\}, \\
\vecL_t^2(\partial \Omega) &:=& \left \{  \vecu \in L^2(\partial \Omega)^3:\nu \cdot \vecu=0 \right\}, \\
\vecH^{-\frac{1}{2}}(\mdiv,\partial  \Omega)&:=&\left \{  \vecu \in \vecH^{-1/2}(\partial \Omega): \nu\cdot \vecu =0 ,~\mbox{div}_{\partial \Omega} \vecu \in H^{-\frac{1}{2}}(\partial \Omega) \right\}, \\
\vecH^{-\frac{1}{2}}(\mcurl, \partial \Omega)&:=&\left \{  \vecu \in \vecH^{-1/2}(\partial \Omega): \nu\cdot \vecu =0 ,~\mbox{curl}_{\partial \Omega} \vecu \in H^{-\frac{1}{2}}(\partial \Omega) \right\}.
\end{eqnarray*}
We define $\vecH_{loc}(\mcurl, \R^3 \backslash \overline{\Omega})$ as the usual local space. For any $\vecu \in \vecH(\mcurl, \Omega)$, the tangential trace $\nu \times \vecu \in \vecH^{-\frac{1}{2}}(\mdiv, \partial \Omega)$ and $(\nu \times \vecu) \times \nu \in \vecH^{-\frac{1}{2}}(\mcurl, \partial \Omega)$  are well defined (c.f. \cite{KH,buffa2002traces}).
\subsection{Well-posedness of the scattering problem}

The forward scattering problem \eqref{Scattered}--\eqref{SilverMuller} can be studied variationally as in \cite{KM}. We can directly obtain the following lemma.
\begin{lemma} \label{ForwardMaxwellFredholm}
Assume there exists $\gamma_{1,2} >0$ such that
\begin{eqnarray*}
(A \xi,\xi) \ge \gamma_1 |\xi|^2, \quad \forall \xi \in \C^3\quad \mbox{a.e. in} \quad \R^3, \\
(N \xi,\xi) \ge \gamma_2 |\xi|^2, \quad \forall \xi \in \C^3\quad \mbox{a.e. in} \quad \R^3.
\end{eqnarray*}
Then the forward problem  (\ref{Scattered})-(\ref{SilverMuller}) satisfies the Fredholm alternative.
\end{lemma}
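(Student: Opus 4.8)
The plan is to reduce the whole-space problem to a bounded domain and then apply the Fredholm theory to a variational formulation that is coercive up to a compact perturbation. First I would fix $\rho$ large enough that $\overline{D}\subset B_\rho$ and $A\equiv\vecI$, $N\equiv\vecI$ outside $B_\rho$. Since $\vecE^s$ solves the free Maxwell equation $\mcurl^2\vecE^s-k^2\vecE^s=0$ in $\R^3\setminus\overline{B_\rho}$ together with the Silver--M\"uller condition \eqref{SilverMuller}, its exterior behaviour is encoded by the electric-to-magnetic capacity (Calder\'on) operator $G_e$ on $\partial B_\rho$, which maps $\nu\times\vecE^s$ to $\nu\times\mcurl\vecE^s$ and is bounded on $\vecH^{-\frac12}(\mdiv,\partial B_\rho)$ (see \cite{KH}). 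Replacing the radiation condition by this operator, the problem becomes: find $\vecE^s\in\vecH(\mcurl,B_\rho)$ such that for all $\vecv\in\vecH(\mcurl,B_\rho)$,
$$\int_{B_\rho}\bigl(A\mcurl\vecE^s\cdot\overline{\mcurl\vecv}-k^2 N\vecE^s\cdot\overline{\vecv}\bigr)\,d\bx-\langle G_e(\nu\times\vecE^s),(\nu\times\vecv)\times\nu\rangle=\ell(\vecv),$$
where $\ell(\vecv)=-\int_{B_\rho}(\vecI-A)\mcurl\vecf\cdot\overline{\mcurl\vecv}\,d\bx+k^2\int_{B_\rho}(\vecI-N)\vecf\cdot\overline{\vecv}\,d\bx$ is a bounded antilinear functional, using that $\vecI-A$ and $\vecI-N$ are supported in $B_\rho$.

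Next I would recast the sesquilinear form $a(\cdot,\cdot)$ on the left as the sum of a coercive part and a compact part. The principal term $\int_{B_\rho}A\mcurl\vecE^s\cdot\overline{\mcurl\vecv}$ is coercive on the curl by the first ellipticity assumption, and the capacity operator $G_e$ is known to split into a coercive block plus a compact perturbation. The genuine obstacle is the term $-k^2\int_{B_\rho}N\vecE^s\cdot\overline{\vecv}$: since $\vecH(\mcurl,B_\rho)$ does \emph{not} embed compactly into $\vecL^2(B_\rho)$, this term is bounded but not compact, so a na\"ive G\aa rding argument fails. To circumvent this I would use the $N$-weighted Helmholtz decomposition $\vecE^s=\vecu+\nabla p$ with $\mdiv(N\vecu)=0$ in $B_\rho$ and $p\in H^1(B_\rho)$, which is orthogonal in the $\int_{B_\rho}N\,\cdot\,\overline{\cdot}$ inner product. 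On the gradient part the form reduces to a coercive scalar problem, coercivity of $\int_{B_\rho}N\nabla p\cdot\overline{\nabla q}$ following from the second ellipticity assumption; the divergence-free part $\vecu$ lies in the space $\{\vecu\in\vecH(\mcurl,B_\rho):\mdiv(N\vecu)=0\}$, which embeds compactly into $\vecL^2(B_\rho)$ by the Weck--Weber compactness theorem.

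With this decomposition in hand, I would write $a=a_c+a_k$ where $a_c$ is coercive on $\vecH(\mcurl,B_\rho)$ (combining curl-coercivity, the coercive block of $G_e$, and the coercive gradient part) and $a_k$ collects the remaining contributions, all of which factor through the compact embedding of the divergence-free subspace into $\vecL^2(B_\rho)$ and through the compact part of $G_e$. Lax--Milgram applied to $a_c$ yields an isomorphism, while $a_k$ induces a compact operator, so the operator associated with $a$ is a compact perturbation of an isomorphism and the Fredholm alternative follows. I expect the Helmholtz decomposition together with the Weck--Weber compactness to be the crux of the argument, since this is precisely what restores the compactness that the bare $\vecH(\mcurl)$ setting lacks; the boundedness of $\ell$ and the splitting properties of $G_e$ are comparatively routine and may be quoted from \cite{KH}.
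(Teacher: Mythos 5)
Your proposal is correct and follows essentially the route the paper itself takes: the paper gives no detailed proof but simply invokes the variational treatment of Kirsch and Monk \cite{KM}, which is precisely the argument you reconstruct (truncation to a ball via the exterior Calder\'on operator, Helmholtz-type decomposition to recover compactness that $\vecH(\mcurl)$ lacks, and Fredholm theory for a coercive-plus-compact form). The only detail worth tightening is that the compact embedding of the $N$-divergence-free subspace into $\vecL^2(B_\rho)$ requires an appropriate boundary constraint in the definition of that subspace, as in \cite{KM}.
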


From Lemma \ref{ForwardMaxwellFredholm}, the existence of a unique solution  to (\ref{Scattered})-(\ref{SilverMuller}) follows directly from   following lemma.
\begin{lemma} \label{ForwardMaxwellUnique}
There exists at most one solution to (\ref{Scattered})-(\ref{SilverMuller}).
\end{lemma}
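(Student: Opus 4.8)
The plan is to argue by linearity, reducing to showing that a radiating solution of the homogeneous problem vanishes, and then to combine a Rellich-type identity with the unique continuation principle. Let $\vecE^s_1$ and $\vecE^s_2$ be two solutions of (\ref{Scattered})--(\ref{SilverMuller}) for the same right-hand side $\vecf$, and set $\vecu := \vecE^s_1 - \vecE^s_2 \in \vecH_{loc}(\mcurl, \R^3)$. Then $\vecu$ is a radiating solution of the homogeneous equation $\mcurl(A\mcurl \vecu) - k^2 N \vecu = 0$ in $\R^3$, understood in the weak sense, and the goal is to prove $\vecu \equiv 0$.

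The first step is an energy/Green's identity. I would fix $\rho$ with $\overline{D}_1 \subset B_\rho$, so that $A = N = \vecI$ on $\partial B_\rho$. Pairing the equation with $\overline{\vecu}$ and integrating by parts over $B_\rho$ gives
\begin{equation*}
\int_{B_\rho} \left( A\mcurl \vecu \cdot \mcurl \overline{\vecu} - k^2 N \vecu \cdot \overline{\vecu} \right) dx = \int_{\partial B_\rho} (\nu \times \mcurl \vecu) \cdot \overline{\vecu} \, ds .
\end{equation*}
Here I would invoke the standing assumption that $A$ and $N$ are real-valued and symmetric: this makes both volume integrands real, and since the physical wave number $k = \sqrt{\epsilon_0 \mu_0}\,\omega$ is real, the left-hand side is real. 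Consequently
\begin{equation*}
\im \int_{\partial B_\rho} (\nu \times \mcurl \vecu) \cdot \overline{\vecu} \, ds = 0 .
\end{equation*}

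The second step is to exploit this in the exterior homogeneous region. On $\R^3 \backslash \overline{B}_\rho$ the field $\vecu$ satisfies the constant-coefficient system $\mcurl^2 \vecu - k^2 \vecu = 0$ together with the Silver–Müller radiation condition, so the vanishing of the above imaginary part forces, by the Maxwell analogue of Rellich's lemma \cite{CK}, the electric far-field pattern of $\vecu$ to be identically zero; hence $\vecu = 0$ in $\R^3 \backslash \overline{B}_\rho$. Since $\R^3 \backslash \overline{D}_1$ is connected and there $\vecu$ solves the constant-coefficient Maxwell system, unique continuation extends this to $\vecu = 0$ throughout $\R^3 \backslash \overline{D}_1$. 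To propagate inward, I would use that $\vecu \in \vecH_{loc}(\mcurl)$ and the weak formulation guarantee continuity of the natural traces $\nu \times \vecu$ and $\nu \times A\mcurl \vecu$ across $\partial D_1$; from the exterior side both vanish, yielding zero Cauchy data on $\partial D_1$ from inside. Invoking the unique continuation principle for the variable-coefficient Maxwell system in the connected shell $D_1 \backslash \overline{D}$ (where $A, N \in C^1(\overline{D}_1 \backslash D)$) then gives $\vecu = 0$ there. Repeating the trace argument across $\partial D$, where $A = N = \vecI$ inside $D$, produces zero Cauchy data for $\mcurl^2 \vecu - k^2 \vecu = 0$ in $D$ and hence $\vecu = 0$ in $D$. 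Assembling the three regions yields $\vecu \equiv 0$ in $\R^3$, which is the claim.

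The main obstacle I anticipate is the unique continuation step across the interfaces $\partial D_1$ and $\partial D$, where the coefficients are only piecewise smooth and may jump. This is precisely where the $C^1$-regularity of $A$ and $N$ and the $C^2$-smoothness of the boundaries enter: they ensure that the Cauchy data transmitted through the continuous tangential traces are admissible input for a unique continuation theorem valid for the Maxwell system with variable coefficients, allowing the propagation of the zero set across each interface.
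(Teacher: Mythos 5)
Your proposal is correct and follows essentially the same route as the paper: the paper's appendix proof invokes Theorem 2.1 of Cakoni--Colton for the Rellich/energy-identity step yielding $\vecE^s=0$ outside $\overline{D}_1$, and then propagates the vanishing inward across $\partial D_1$ and $\partial D$ via extension by zero combined with Okaji's unique continuation theorem for the Maxwell system with $C^1$ coefficients --- which is precisely the rigorous implementation of your ``zero Cauchy data plus unique continuation'' step.
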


The proof of the lemma is classical and for completeness we postpone the proof in the Appendix Section \ref{Appendix}. Combine Lemma \ref{ForwardMaxwellFredholm} and Lemma \ref{ForwardMaxwellUnique} we can have
\begin{lemma} \label{ForwardMaxwell}
Assume the assumptions in Lemma \ref{ForwardMaxwellFredholm} holds.
Then there exists a unique solution $\vecE^s \in \vecH_{loc}(\mcurl, \R^3)$ to (\ref{Scattered})-(\ref{SilverMuller}) depending continuously on $\vecf \in \vecH(\mcurl, D_1 \backslash \overline{D})$ such that
$$
\|\vecE^s\|_{\vecH(\mcurl,B_R)} \le c \|\vecf\|_{\vecH(\mcurl, D_1 \backslash \overline{D})},
$$
where $B_R$ is a sufficiently large ball in $\R^3$ that contains $D_1$, {and $c>0$ is a constant that depends on $D_1$, $D$, $B_R$, $A$, and $N$.}
\end{lemma}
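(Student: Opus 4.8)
The plan is to combine the Fredholm alternative of Lemma~\ref{ForwardMaxwellFredholm} with the uniqueness statement of Lemma~\ref{ForwardMaxwellUnique}, and then to read off the stability estimate from the bounded inverse theorem. First I would recast the forward problem on a bounded computational domain. Choosing the ball $B_R$ large enough to contain $D_1$, I would reformulate \eqref{Scattered}--\eqref{SilverMuller} as an equivalent problem on $B_R$ by imposing on $\partial B_R$ the exact Dirichlet-to-Neumann (Calder\'on) boundary condition that encodes the Silver--M\"uller radiation condition; since the medium is homogeneous in $\R^3 \backslash \overline{B_R}$, this DtN map is given explicitly through the free-space radiating solution. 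The resulting variational problem reads: find $\vecE^s \in \vecH(\mcurl, B_R)$ such that a sesquilinear form $a(\vecE^s, \vecv) = \ell_{\vecf}(\vecv)$ holds for all test fields $\vecv$, where $\ell_{\vecf}$ depends linearly and boundedly on $\vecf \in \vecH(\mcurl, D_1 \backslash \overline{D})$. Equivalently this is an operator equation $\mathcal{A}\vecE^s = \mathcal{F}\vecf$ with $\mathcal{A} : \vecH(\mcurl, B_R) \to \vecH(\mcurl, B_R)^*$ bounded.

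Next I would exploit the structure. Lemma~\ref{ForwardMaxwellFredholm} tells us that $\mathcal{A}$ decomposes as a coercive part plus a compact perturbation, hence is a Fredholm operator of index zero; this is precisely the content of the Fredholm alternative. Lemma~\ref{ForwardMaxwellUnique} then shows that the homogeneous problem (with $\vecf = \boldsymbol{0}$) admits only the trivial solution, so $\mathcal{A}$ is injective. A Fredholm operator of index zero with trivial kernel is automatically bijective, because its range is closed and the cokernel has the same dimension as the kernel; by the bounded inverse theorem $\mathcal{A}^{-1}$ is then bounded. This yields existence and uniqueness of $\vecE^s$ on $B_R$, together with the estimate $\|\vecE^s\|_{\vecH(\mcurl, B_R)} \le \|\mathcal{A}^{-1}\|\,\|\mathcal{F}\|\,\|\vecf\|_{\vecH(\mcurl, D_1 \backslash \overline{D})}$, where the constant $c = \|\mathcal{A}^{-1}\|\,\|\mathcal{F}\|$ depends only on $D$, $D_1$, $B_R$, $A$, and $N$ through the coefficients entering $a$ and $\ell_{\vecf}$.

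Finally I would extend the solution to all of $\R^3$. In $\R^3 \backslash \overline{B_R}$ the coefficients are the identity, so the field is a radiating solution of the homogeneous Maxwell system determined by its tangential trace $\nu \times \vecE^s$ on $\partial B_R$; representing it through the free-space Green's tensor shows $\vecE^s \in \vecH_{loc}(\mcurl, \R^3)$ and propagates the $B_R$-estimate to any larger ball. The main point to verify with genuine care is the equivalence between the problem on $\R^3$ and the truncated problem on $B_R$ with the DtN condition, namely that this operator faithfully encodes \eqref{SilverMuller} and is itself bounded so that the coercive-plus-compact structure underlying Lemma~\ref{ForwardMaxwellFredholm} is preserved. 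That is the only step requiring Maxwell-specific work; the remainder is an abstract consequence of the index-zero Fredholm property, so I do not expect substantial difficulty beyond bookkeeping the dependence of $c$ on the data.
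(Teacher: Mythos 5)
Your proposal is correct and follows essentially the same route as the paper, which simply states that Lemma \ref{ForwardMaxwell} follows by combining Lemma \ref{ForwardMaxwellFredholm} (the Fredholm alternative, obtained variationally via the exterior Calder\'on operator as in the cited reference of Kirsch--Monk) with the uniqueness of Lemma \ref{ForwardMaxwellUnique}. You have merely spelled out the standard details — index-zero Fredholm plus injectivity gives bijectivity and a bounded inverse — that the paper leaves implicit.
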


\subsection{Maxwell eigenvalue and regularity properties}
In the exterior inverse scattering problem using far field measurements, one can conclude that two scattered fields coincide in the exterior of an inhomogeneous medium if their far field patterns coincide. As an analog in the interior inverse scattering problem using internal measurements, one expects to conclude that two scattered fields coincide in the homogeneous cavity if their internal measurements coincide on {$\partial C$, where $\partial C$ (introduced in Section \ref{Introduction}) is the ball surface where the measurements are on}. However, to achieve this, one need to assume that $k^2$ is not a Maxwell eigenvalue for $C$. More precisely, we define $k^2$ is a {\bf Maxwell eigenvalue} for $C$ if there exists a nontrivial solution $\vecu \in \vecH(\mcurl, C)$ such that
\begin{eqnarray*}
\mcurl^2 \vecu- k^2 \vecu = 0 \quad &\mbox{in}& \quad C, \\
\nu \times \vecu = 0 \quad &\mbox{on}& \quad \partial C.
\end{eqnarray*}
We now state the following lemma.
\begin{lemma} [Theorem 4.32 \cite{KH}]
Assume that $k^2$ is not a Maxwell eigenvalue for $C$. Then there exists a unique solution $\vecu \in \vecH(\mcurl, C)$ depending continuously on $\vecg \in \vecL^2(C)$ such that
\begin{eqnarray*}
\mcurl^2 \vecu- k^2 \vecu = \vecg \quad &\mbox{in}& \quad C, \\
\nu \times \vecu = 0 \quad &\mbox{on}& \quad \partial C.
\end{eqnarray*}
\end{lemma}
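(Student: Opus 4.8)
The statement is the classical interior Maxwell boundary value problem with a perfectly conducting boundary condition, and the plan is to prove it variationally via the Fredholm alternative. First I would introduce the energy space $\vecH_0(\mcurl, C) := \{ \vecu \in \vecH(\mcurl, C) : \nu \times \vecu = 0 \text{ on } \partial C \}$, into which the boundary condition $\nu \times \vecu = 0$ is built as an essential constraint. Multiplying the equation by a test field $\overline{\vecv} \in \vecH_0(\mcurl, C)$ and integrating by parts with the curl Green's formula, the boundary term $\langle \nu \times \mcurl \vecu, \vecv_T \rangle_{\partial C}$ drops out because the tangential component $\vecv_T = (\nu \times \vecv) \times \nu$ vanishes. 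This yields the variational problem: find $\vecu \in \vecH_0(\mcurl, C)$ with
\begin{equation*}
a(\vecu, \vecv) := \int_C \mcurl \vecu \cdot \mcurl \overline{\vecv} \, d\bx - k^2 \int_C \vecu \cdot \overline{\vecv} \, d\bx = \int_C \vecg \cdot \overline{\vecv} \, d\bx \qquad \forall \vecv \in \vecH_0(\mcurl, C),
\end{equation*}
and a routine argument recovers the boundary value problem in the distributional sense.

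The main obstacle is that the sesquilinear form $a$ is not coercive, and --- more seriously --- the embedding $\vecH_0(\mcurl, C) \hookrightarrow \vecL^2(C)$ is \emph{not} compact, so one cannot directly treat the term $-k^2 \int_C \vecu \cdot \overline{\vecv}$ as a compact perturbation of the coercive part. The standard remedy is a Helmholtz (Hodge) decomposition. I would split
\begin{equation*}
\vecH_0(\mcurl, C) = \nabla H^1_0(C) \oplus X_0, \qquad X_0 := \left\{ \vecu \in \vecH_0(\mcurl, C) : (\vecu, \nabla \phi)_{\vecL^2(C)} = 0 \ \ \forall \phi \in H^1_0(C) \right\},
\end{equation*}
noting that $\nabla H^1_0(C) \subset \vecH_0(\mcurl, C)$ since gradients are curl-free and have vanishing tangential trace. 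The crucial ingredient, and the technical heart of the proof, is the compactness of the embedding $X_0 \hookrightarrow \vecL^2(C)$; this holds because $\partial C$ is $C^2$ (Weber's compactness theorem, valid already for Lipschitz domains), and it is precisely this compactness that restores the Fredholm structure.

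With $\vecu = \nabla p + \vecu_0$ decomposed accordingly, I would treat the two components separately. Testing against $\vecv = \nabla q$ with $q \in H^1_0(C)$, the curl terms vanish and orthogonality eliminates $\vecu_0$, leaving $-k^2 (\nabla p, \nabla q)_{\vecL^2(C)} = (\vecg, \nabla q)_{\vecL^2(C)}$; since $k^2 \neq 0$, the Poincar\'e inequality and the Lax--Milgram lemma determine $p \in H^1_0(C)$ uniquely. Testing against $\vecv_0 \in X_0$ gives
\begin{equation*}
\int_C \mcurl \vecu_0 \cdot \mcurl \overline{\vecv_0} \, d\bx - k^2 \int_C \vecu_0 \cdot \overline{\vecv_0} \, d\bx = (\vecg, \vecv_0)_{\vecL^2(C)},
\end{equation*}
and writing this as $\tilde a(\vecu_0, \vecv_0) - (k^2 + 1)(\vecu_0, \vecv_0)$ with the coercive form $\tilde a(\vecu_0, \vecv_0) := (\mcurl \vecu_0, \mcurl \vecv_0) + (\vecu_0, \vecv_0)$, the lower-order term induces a compact operator on $X_0$ by the compact embedding above. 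The Fredholm alternative then applies on $X_0$.

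Finally, I would invoke uniqueness to obtain existence. If $\vecg = 0$ then $p = 0$, and $\vecu = \vecu_0$ solves the homogeneous problem $\mcurl^2 \vecu - k^2 \vecu = 0$ in $C$ with $\nu \times \vecu = 0$ on $\partial C$; by the hypothesis that $k^2$ is not a Maxwell eigenvalue for $C$, this forces $\vecu = 0$. Hence the Fredholm alternative yields a unique solution for every $\vecg \in \vecL^2(C)$, and the bounded inverse (equivalently the open mapping theorem) gives the continuous dependence $\|\vecu\|_{\vecH(\mcurl, C)} \le c \, \|\vecg\|_{\vecL^2(C)}$. The only genuinely hard step is the compact embedding $X_0 \hookrightarrow \vecL^2(C)$; everything else is bookkeeping around the Helmholtz decomposition and the Fredholm alternative.
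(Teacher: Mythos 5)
Your argument is correct and is the standard proof of this result: the paper itself offers no proof, merely citing Theorem 4.32 of \cite{KH}, and the proof given there proceeds exactly as you describe, via the variational formulation in $\vecH_0(\mcurl,C)$, the Helmholtz decomposition isolating the subspace on which the embedding into $\vecL^2(C)$ is compact, the Fredholm alternative, and the non-eigenvalue hypothesis to pass from uniqueness to existence. Nothing further is needed.
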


In the {remainder} of the paper, we assume that $k^2$ is not a Maxwell eigenvalue for $C$ (note that this is not a restriction since we can choose the measurement ball $C$ such that this assumption holds).

For our uniqueness result in Section \ref{Uniqueness}, we need the following regularity property which allows us to obtain the compact embedding for Maxwell's equations.

\begin{lemma} \label{MaxwellTransmissionRegularity}
Let $\vecE^s$ be the unique solution to the scattering problem  (\ref{Scattered})-(\ref{SilverMuller}) where $\vecf=\vecE^i(\cdot, \by, \vecp)$ defined via \eqref{Intro_Ei}. Let $B_R$ be a sufficiently large ball in $\R^3$ that contains $D_1$. Then $\vecE^s \in \vecH^1(D)$, $\vecE^s \in \vecH^1(D_1 \backslash \overline{D})$, $\vecE^s \in \vecH^1(B_R \backslash \overline{D}_1)$ and
\begin{eqnarray*}
\|\vecE^s\|_{\vecH^1(\Omega)} \le C \left(  \|\vecE^s\|_{\vecH(\mcurl, B_R)} +\|\vecE^i\|_{\vecH^1(B_R \backslash \overline{D})} \right),
\end{eqnarray*}
where $C$ is a constant and $\Omega$ can be $D$, $D_1 \backslash \overline{D}$ and $B_R \backslash \overline{D}_1$.
\end{lemma}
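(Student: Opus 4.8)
The plan is to work separately in each of the three subregions $\Omega\in\{D,\ D_1\backslash\overline{D},\ B_R\backslash\overline{D}_1\}$, since $\vecE^s$ can only be \emph{piecewise} $\vecH^1$: the coefficients $A,N$ jump across the $C^2$ interfaces $\partial D$ and $\partial D_1$, so the right-hand side of \eqref{Scattered} carries surface distributions there and $\vecE^s$ cannot be globally $\vecH^1$. Inside each open subregion, however, $A$ and $N$ are $C^1$, symmetric, and positive definite, and the forcing is $\vecL^2$: it vanishes in $D$ and in $B_R\backslash\overline{D}_1$ (where $A=N=\vecI$), while in $D_1\backslash\overline{D}$ the dipole location $\by\in D$ is bounded away from $\overline{D_1\backslash D}$, so $\vecE^i$ is smooth there and $\mcurl((\vecI-A)\mcurl\vecE^i)-k^2(\vecI-N)\vecE^i\in\vecL^2(D_1\backslash\overline{D})$. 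The central tool will be the classical $\mcurl$--$\mdiv$ regularity estimate: on a bounded domain with $C^2$ boundary, a field $\vecu\in\vecH(\mcurl,\Omega)$ with $\mdiv(N\vecu)\in L^2(\Omega)$ (for $C^1$ symmetric positive definite $N$) and tangential trace in $\vecH^{1/2}(\partial\Omega)$ lies in $\vecH^1(\Omega)$ with a continuity estimate; see \cite{KH,buffa2002traces}.

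First I would control the divergence in each subregion. Taking the distributional divergence of \eqref{Scattered} inside a region and using $\mdiv\,\mcurl=0$ gives $\mdiv\vecE^s=0$ in $D$ and in $B_R\backslash\overline{D}_1$, so there $\vecE^s\in\vecH(\mdiv)$ trivially; in the shell it gives $\mdiv(N\vecE^s)=\mdiv\!\big((\vecI-N)\vecE^i\big)$, and since $N\in C^1$ and $\vecE^i\in\vecH^1(D_1\backslash\overline{D})$ the right-hand side is in $L^2$ with $\|\mdiv(N\vecE^s)\|_{L^2}\le C\|\vecE^i\|_{\vecH^1(D_1\backslash\overline{D})}$. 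Together with $\vecE^s\in\vecH(\mcurl,B_R)$ this places $\vecE^s$ in the domain of the embedding in every $\Omega$ and bounds the volume part of the $\vecH^1$ norm by $\|\vecE^s\|_{\vecH(\mcurl,B_R)}+\|\vecE^i\|_{\vecH^1(B_R\backslash\overline{D})}$.

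It then remains to furnish the boundary traces required by the embedding. On the sphere $\partial B_R$ the trace of $\vecE^s$ is smooth, by exterior (multipole) regularity together with interior ellipticity, and $\vecE^i$ is $C^\infty$ on $\overline{B_R\backslash D}$ because $\by\in D$. Across the interfaces $\partial D$ and $\partial D_1$, I would invoke the two-sided (transmission) regularity for the $\mcurl$--$\mcurl$ system rather than a one-directional bootstrap: the global membership $\vecE^s\in\vecH_{loc}(\mcurl,\R^3)$ gives the matched tangential trace $[\nu\times\vecE^s]=0$, and the weak form of \eqref{Scattered} supplies the natural conormal condition on $\nu\times A\mcurl\vecE^s$ up to a known smooth jump determined by $\vecE^i$. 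Treating the two adjacent regions together as a transmission problem with these matched data yields piecewise $\vecH^1$ regularity up to the interface, the interiors being handled by standard local div--curl regularity. Applying the embedding in each $\Omega$ and summing the region-wise estimates, with all incident-field contributions absorbed into $\|\vecE^i\|_{\vecH^1(B_R\backslash\overline{D})}$, produces the asserted bound.

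The main obstacle is this last interface step in the variable-coefficient shell: establishing that the matched tangential trace and the natural conormal transmission condition upgrade the interface traces from $\vecH^{-1/2}(\mdiv,\partial\Omega)$ to $\vecH^{1/2}$ without circularity, and carrying this out with the non-scalar $C^1$ coefficients $A$ and $N$, so that it is $N\vecE^s$ rather than $\vecE^s$ that is naturally divergence-controlled. Once the trace regularity is secured, the divergence computation and the verification that every lower-order remainder (in particular the terms involving $\nabla N$) is dominated by $\|\vecE^s\|_{\vecH(\mcurl,B_R)}+\|\vecE^i\|_{\vecH^1(B_R\backslash\overline{D})}$ are routine.
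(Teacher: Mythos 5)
Your overall strategy --- controlling $\mdiv(N\vecE^s)$ from the equation and then applying a curl--div embedding region by region, with transmission data across $\partial D$ and $\partial D_1$ --- is the same route the paper takes (it localizes with a cutoff near each interface and invokes the piecewise-$\vecH^1$ transmission regularity of Theorem 2.6 and Remark 2.7 in \cite{CaCo1}). The problem is that the step you explicitly defer as ``the main obstacle'' is the one genuinely nontrivial point of the proof, and your proposal does not close it. The two-sided embedding you need does \emph{not} take the conormal data $\nu\times A\mcurl\vecE$ as a hypothesis: its hypotheses are that the \emph{jumps} across the interface of the tangential trace and of the normal trace $\nu\cdot N\vecE$ both lie in $H^{1/2}$. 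The conormal transmission condition handed to you by the weak formulation lives only in $\vecH^{-\frac{1}{2}}(\mdiv,\partial D)$ and cannot be ``upgraded''; neither one-sided trace of $\vecE^s$ is known to be $H^{1/2}$ a priori (that would be circular, as you note). So as written, the hypotheses of the embedding are not verified and the interface step fails.

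The missing ingredient is the conversion of the conormal condition into the normal-trace condition. The paper does this with the surface divergence identity $\mdiv_{\partial D}(\nu\times\vech)=-\nu\cdot\mcurl\vech$ applied to $\vech=A\mcurl\vecE$ and $\vech=\mcurl\vecE^s$: combining it with the equations $\mcurl(A\mcurl\vecE)=k^2N\vecE$ and $\mcurl^2\vecE^s=k^2\vecE^s$ and taking $\mdiv_{\partial D}$ of the transmission condition \eqref{TotalEqn4} yields $(N\vecE\cdot\nu)|_{\partial D}-(\vecE^s\cdot\nu)|_{\partial D}=(\vecE^i\cdot\nu)|_{\partial D}$ in $H^{-1/2}(\partial D)$, and the right-hand side is in $H^{1/2}(\partial D)$ because $\vecE^i$ is $\vecH^1$ (indeed smooth) near $\partial D$. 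Together with the tangential jump $\nu\times\vecE^i\in\vecH_t^{1/2}(\partial D)$ and the $L^2$ control of $\mdiv(N\chi\vecE)=\nabla\chi\cdot(N\vecE)$ after cutting off, the transmission regularity theorem of \cite{CaCo1} then applies and gives the estimate. (An equivalent shortcut: taking the global distributional divergence of \eqref{Scattered} shows $N\vecE^s-(\vecI-N)\vecE^i$ is divergence-free across $\partial D$, which encodes the same normal-jump condition.) Until you supply this argument, the proof is incomplete; the remaining parts of your outline --- the divergence computation, the smoothness of $\vecE^i$ away from $\by$, and the localization and summation over the two interfaces --- match the paper and are routine.
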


For sake of completeness we include the proof in the Appendix Section \ref{Appendix}.

\begin{remark}\label{MaxwellTransmissionRegularityRemark}
If in addition $\mcurl \vecE^i(\cdot, \by, \vecp) \in \vecH_{loc}^1(\R^3 \backslash \overline{D})$, Lemma \ref{MaxwellTransmissionRegularity} implies that $A\mcurl \vecE^s \in \vecH^1(\Omega)$ where $\Omega$ can be $D$, $D_1 \backslash \overline{D}$ and $B_R \backslash \overline{D}_1$ (see for instance, Theorem 2.5, Theorem 2.6, and Remark 2.7 in \cite{CaCo1}). This can be proved exactly in the same way by introducing $\vecH = A\mcurl \vecE$ and by writing the electromagnetic scattering problem in terms of $\vecH$.
\end{remark}

For later analyses, we need the following regularity result. It is a direct consequence of $\mdiv \vecu=0$ and interior elliptic regularity properties (c.f. \cite{GiT}).

\begin{lemma} \label{MaxwellInteriorRegularity}
Assume $\vecu \in \vecH(\mcurl,\Omega)$ satisfies
$$
\mcurl^2 \vecu - k^2 \vecu =0 \quad \mbox{in} \quad \Omega.
$$
Then
$\vecu \in \vecH^s(\Omega')$ for any $s>0$ where $\Omega' \,{\subset\subset}\, \Omega$, i.e. $\vecu$ is analytic in $\Omega'$.
\end{lemma}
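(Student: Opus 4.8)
The plan is to reduce the vector equation to a family of scalar Helmholtz equations and then to invoke interior elliptic regularity for the Helmholtz operator.

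First I would establish that $\dive \vecu = 0$ in the distributional sense. Since $\vecu \in \vecH(\mcurl,\Omega)$ and the equation gives $\mcurl^2 \vecu = k^2 \vecu \in \vecL^2(\Omega)$, the identity $\mcurl^2 \vecu - k^2 \vecu = 0$ holds distributionally. Taking the divergence of this equation and using that the divergence of any curl vanishes, namely $\dive \mcurl = 0$, the first term drops out and I am left with $-k^2 \dive \vecu = 0$. Because $k \neq 0$, this forces $\dive \vecu = 0$ in $\Omega$.

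Next I would use the vector identity $\mcurl^2 \vecu = \nabla(\dive \vecu) - \Delta \vecu$, valid in the sense of distributions, where $\Delta$ acts componentwise. Combined with $\dive \vecu = 0$ and the equation $\mcurl^2 \vecu = k^2 \vecu$, this yields $\Delta \vecu + k^2 \vecu = 0$ componentwise; that is, each scalar component $u_j$ of $\vecu$ is an $L^2$ distributional solution of the Helmholtz equation $\Delta u_j + k^2 u_j = 0$ in $\Omega$. I would then apply interior elliptic regularity: the operator $\Delta + k^2$ has constant (hence real-analytic) coefficients and is elliptic, so it is analytic-hypoelliptic, and every distributional solution $u_j$ is real-analytic in $\Omega$. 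In particular, for any $\Omega' \subset\subset \Omega$ each $u_j \in H^s(\Omega')$ for every $s>0$, so $\vecu \in \vecH^s(\Omega')$ as claimed. If one prefers to avoid analytic hypoellipticity, the Sobolev conclusion alone follows by a standard bootstrap: $u_j \in L^2$ and $\Delta u_j = -k^2 u_j \in L^2$ give $u_j \in H^2_{loc}$, and feeding this back into the right-hand side and iterating yields $u_j \in H^s_{loc}$ for all $s$.

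The only point that requires care — the main obstacle, such as it is — is justifying the two distributional manipulations (taking the divergence of the equation, and applying the curl-curl identity) starting from the low baseline regularity $\vecu \in \vecH(\mcurl,\Omega)$. Once $\dive \vecu = 0$ is in hand, the reduction to the scalar Helmholtz equation and the ensuing regularity conclusion are entirely standard.
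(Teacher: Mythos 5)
Your proposal is correct and follows exactly the route the paper indicates: the paper gives no detailed proof, stating only that the lemma "is a direct consequence of $\mdiv \vecu=0$ and interior elliptic regularity properties" (citing Gilbarg--Trudinger), which is precisely the reduction you carry out — divergence-free from the equation, the identity $\mcurl^2 \vecu = \nabla(\mdiv\vecu)-\Delta\vecu$, and componentwise Helmholtz regularity. Your elaboration of the distributional justifications is a faithful filling-in of the same argument rather than a different approach.
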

\section{The exterior transmission problem} \label{MaxwellETE}
To show that the cavity $D$ is uniquely determined by the measurements $\vecE^s(\bx,\by,\vecp)$ for all $\bx \in \Sigma$, $\by \in \Sigma$ and $\vecp \in \R^3$, we follow the approach made in \cite{CaCo1, CaCoMe, Hah,Ch}. In this approach, one needs to consider the following boundary value problem which is refereed as the {\it exterior transmission problem}: find nontrivial $\widetilde{\vecw} \in \vecH_{loc}(\mcurl, \R^3 \backslash \overline{D} )$ and $\widetilde{\vecv} \in \vecH_{loc}(\mcurl, \R^3 \backslash \overline{D} )$ that satisfies the following
\begin{eqnarray}
\mcurl(A\mcurl \widetilde{\vecw}) - k^2 N \widetilde{\vecw} = \widetilde{\vecf}_1 \quad &\mbox{in}& \quad \R^3 \backslash \overline{D}, \label{ETEMaxwell1} \\
\mcurl^2 \widetilde{\vecv} - k^2  \widetilde{\vecv} = \widetilde{\vecf}_2 \quad &\mbox{in}& \quad \R^3 \backslash \overline{D}, \label{ETEMaxwell2} \\
\nu \times \widetilde{\vecw} - \nu \times \widetilde{\vecv} = \widetilde{\vecg} \quad &\mbox{on}& \quad \partial D, \label{ETEMaxwell3} \\
\nu \times A \mcurl \widetilde{\vecw}  - \nu \times \mcurl \widetilde{\vecv} = \widetilde{\vech} \quad &\mbox{on}& \quad \partial D, \label{ETEMaxwell4}\\
\lim\limits_{|\bx|\to \infty}\left(\mcurl \widetilde{\vecu} \times \bx-ik|\bx|\widetilde{\vecu} \right)=0,&& \label{ETEMaxwell5} \\
\lim\limits_{|\bx|\to \infty}\left(\mcurl \widetilde{\vecv} \times \bx-ik|\bx|\widetilde{\vecv}\right)=0,&& \label{ETEMaxwell6}
\end{eqnarray}
where $\widetilde{\vecf}_{1,2} \in \vecL_{loc}^2(\R^3 \backslash \overline{D} )$ have compact support in $B_R \backslash \overline{D}$,  $\widetilde{\vecg} \in \vecH^{-\frac{1}{2}}(\mdiv,\partial D)$ and $\widetilde{\vech} \in \vecH^{-\frac{1}{2}}(\mdiv,\partial D)$.
\begin{definition}
Values of $k \in \R$ such that the homogeneous problem (i.e. $\widetilde{\vecf}_{1,2}=0$, $\widetilde{\vecg}=0$ and $\widetilde{\vech}=0$) of (\ref{ETEMaxwell1})-(\ref{ETEMaxwell6}) has a nontrivial solution are called {\it exterior transmission eigenvalues}.
\end{definition}

In the following, we will show that the exterior transmission problem  (\ref{ETEMaxwell1})-(\ref{ETEMaxwell6}) is well-posed when $k = i \kappa$ where $\kappa \in \R$ is sufficiently large. We formulate the exterior transmission problem variationally and employ the T-coercity approach in \cite{Ch}. We also mention related work \cite{li2019maxwell} on exterior transmission eigenvalues for Maxwell equations.

\subsection{Variational formulation}
We first formulate the exterior transmission  problem  (\ref{ETEMaxwell1})-(\ref{ETEMaxwell6}) in a bounded domain. {Recall that $B_R$ is a sufficiently large ball that contains the support of $D_1 \backslash \overline{D}$, let $S_R=\partial B_R$.} Let us first recall the following results. 
\begin{enumerate}
\item
Let $G_{k}$ be the exterior Calderon operator, an isomorphism between $\vecH^{-\frac{1}{2}}(\mdiv, S_R)$ and $\vecH^{-\frac{1}{2}}(\mdiv, S_R)$, that maps a tangential vector field $\hat{\bx} \times \vecE^s$ on $S_R$ to $\hat{\bx} \times \mcurl \vecE^s$ on $S_R$ where $\vecE^s$ satisfies
\begin{eqnarray*}
\mcurl^2 \vecE^s -k^2  \vecE^s = 0 \quad &\mbox{in}& \quad \R^3 \backslash \overline{B_R}, \\
\lim\limits_{|\bx|\to \infty}\left(\mcurl \vecE^s \times \bx-ik|\bx|\vecE^s\right)=0,
\end{eqnarray*}
here $\hat{\bx}=\bx/|\bx|$. The exterior Calderon operator allows {us to} reformulate the unbounded domain problem on a bounded domain (c.f. \cite{KM}).
\item For any $\widetilde{\vecg} \in \vecH^{-\frac{1}{2}}(\mdiv, \partial D)$, there exists a lifting function $\vecv_l\in \vecH_{loc}(\mcurl, \R^3 \backslash \overline{D} )$ such that
\begin{eqnarray*}
\mcurl^2 \vecv_l -k^2  \vecv_l = 0 \quad &\mbox{in}& \quad \R^3 \backslash \overline{D}, \\
\nu \times \vecv_l  = \widetilde{\vecg} \quad &\mbox{on}& \quad \partial D, \\
\lim\limits_{|\bx|\to \infty}\left(\mcurl \vecv_l \times \bx-ik|\bx|\vecv_l\right)=0.
\end{eqnarray*}
Moreover we have
\begin{eqnarray} \label{MaxwellLifting}
\|\vecv_l\|_{\vecH(\mcurl,B_R \backslash \overline{D})} \le M  \|\widetilde{\vecg}\|_{ H^{-\frac{1}{2}}(\mdiv, \partial D)},
\end{eqnarray}
where $M$ is a constant.
\end{enumerate}
From the two results above, we can formulate the exterior transmission problem  (\ref{ETEMaxwell1})-(\ref{ETEMaxwell6}) in a bounded domain. To be more precise, let $\vecw:= \widetilde{\vecw}$, $\vecv := \widetilde{\vecv} + \vecv_\ell$ and ${\vecf}_{1,2}:=\widetilde{\vecf}_{1,2}$ in $B_R \backslash \overline{D}$, and $\vech:= \widetilde{\vech} - \nu \times \mcurl \vecv_\ell$ on $\partial D$, then we have that $\vecw, \vecv \in \vecH(\mcurl, B_R \backslash \overline{D} )$ satisfy 
\begin{eqnarray}
\mcurl(A\mcurl \vecw) - k^2 N \vecw = \vecf_1 \quad &\mbox{in}& \quad B_R \backslash \overline{D}, \label{WellposedETEMaxwell1} \\
\mcurl^2 \vecv - k^2  \vecv = \vecf_2 \quad &\mbox{in}& \quad B_R \backslash \overline{D}, \label{WellposedETEMaxwell2} \\
\nu \times \vecw - \nu \times \vecv = 0 \quad &\mbox{on}& \quad \partial D, \label{WellposedETEMaxwell3} \\
\nu \times A \mcurl \vecw  - \nu \times \mcurl\vecv = \vech \quad &\mbox{on}& \quad \partial D, \label{WellposedETEMaxwell4}\\
\hat{\bx} \times  \mcurl \vecw   = G_{k} (\hat{\bx} \times \vecw) \quad &\mbox{on}& \quad S_R, \label{WellposedETEMaxwell5} \\
\hat{\bx} \times  \mcurl \vecv   = G_{k} (\hat{\bx} \times \vecv) \quad &\mbox{on}& \quad S_R. \label{WellposedETEMaxwell6}
\end{eqnarray}
Now we introduce the Hilbert space $\vecU$
$$
\vecU := \{ (\vecw,\vecv): \vecw \in \vecH(\mcurl, B_R \backslash \overline{D}),\,\,  \vecv \in \vecH(\mcurl, B_R \backslash \overline{D}),\,\, \nu \times \vecw=\nu\times\vecv \,\, \mbox{on} \,\, \partial D\}.
$$
Let  $\left(\cdot,\cdot\right)$ be the usual $\vecL^2(B_R \backslash \overline{D})$ inner product. For a generic bounded domain $\Omega$, let $\left<\cdot,\cdot\right>_{\partial \Omega}$ be the duality pairing between  $\vecH^{-\frac{1}{2}}(\mdiv, \partial \Omega)$ and $\vecH^{-\frac{1}{2}}(\mcurl, \partial \Omega)$. And denote the tangential component of $\vecu$ by
$$
\vecu_T:=(\nu \times \vecu) \times \nu \quad \mbox{on} \quad \partial \Omega.
$$
Now we can derive a variational formulation of (\ref{WellposedETEMaxwell1})-(\ref{WellposedETEMaxwell6}).
\begin{lemma}
The exterior transmission problem (\ref{WellposedETEMaxwell1})-(\ref{WellposedETEMaxwell6}) is equivalent to the following variational problem
\begin{eqnarray} \label{MaxwellETEVariational}
a_k\left( \left(\vecw,\vecv\right), \left(\vecw',\vecv'\right) \right) = F(\vecw',\vecv'), \quad \forall \quad (w',v') \in \vecU
\end{eqnarray}
where
\begin{eqnarray*}
\hspace{-2.422cm}a_k \left( (\vecw,\vecv), (\vecw',\vecv') \right)&:=& \left(A\mcurl \vecw, \mcurl \vecw' \right) -\left(\mcurl \vecv, \mcurl \vecv' \right) - k^2 \left(N \vecw,  \vecw' \right) +k^2  \left(\vecv,  \vecv' \right) \\
&+& \left<G_{k} (\hat{x} \times \vecw),\vecw'_T \right>_{S_R} -\left<G_{k} (\hat{x} \times \vecv),\vecv'_T\right>_{S_R},
\end{eqnarray*}
and
\begin{eqnarray*}
F (\vecw',\vecv') :=  \left(\vecf_1,  \vecw' \right) -  \left(\vecf_2,  \vecv' \right) + \left<\vech,\vecv'_T\right>_{\partial D}.
\end{eqnarray*}

\end{lemma}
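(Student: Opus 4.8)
The plan is to show the equivalence between the boundary value problem (\ref{WellposedETEMaxwell1})--(\ref{WellposedETEMaxwell6}) and the variational problem (\ref{MaxwellETEVariational}) by establishing both directions. First I would derive the variational formulation from the PDE system. Starting from (\ref{WellposedETEMaxwell1}), I take the $\vecL^2(B_R \backslash \overline{D})$ inner product with a test function $\vecw'$ where $(\vecw',\vecv') \in \vecU$, and integrate by parts (the vector Green's identity for the curl operator). This produces the volume term $\left(A\mcurl \vecw, \mcurl \vecw'\right)$, the term $-k^2\left(N\vecw, \vecw'\right)$, and two boundary terms, one on $\partial D$ and one on $S_R$. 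The boundary term on $S_R$ involves $\nu \times A\mcurl \vecw$; since $A = \vecI$ outside the inhomogeneity, this reduces to $\hat{\bx} \times \mcurl \vecw$, and invoking the Calderon operator boundary condition (\ref{WellposedETEMaxwell5}) converts it into the duality pairing $\left<G_{k}(\hat{x}\times\vecw), \vecw'_T\right>_{S_R}$. I do the analogous computation for (\ref{WellposedETEMaxwell2}) tested against $\vecv'$, which yields $-\left(\mcurl\vecv, \mcurl\vecv'\right)$, $+k^2\left(\vecv, \vecv'\right)$, the boundary term on $S_R$ via (\ref{WellposedETEMaxwell6}), and a boundary term on $\partial D$. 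Subtracting the second identity from the first assembles the full sesquilinear form $a_k$ on the left-hand side and the linear functional $F$ on the right.

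The crux of the forward direction is handling the two boundary contributions on $\partial D$ so that they collapse into the single term $\left<\vech, \vecv'_T\right>_{\partial D}$. The $\partial D$ terms are $-\left<\nu \times A\mcurl\vecw, \vecw'_T\right>_{\partial D}$ from the first identity and $+\left<\nu\times\mcurl\vecv, \vecv'_T\right>_{\partial D}$ from the second (with appropriate sign conventions from the orientation of $\nu$ as outward normal on $\partial(B_R\backslash\overline{D})$). Here I would use the constraint built into $\vecU$, namely $\nu\times\vecw' = \nu\times\vecv'$ on $\partial D$, which forces $\vecw'_T = \vecv'_T$ on $\partial D$. This lets me combine the two pairings into $-\left<\nu\times A\mcurl\vecw - \nu\times\mcurl\vecv, \vecv'_T\right>_{\partial D}$, and then the transmission condition (\ref{WellposedETEMaxwell4}) replaces the bracket by $\vech$, giving exactly $\left<\vech, \vecv'_T\right>_{\partial D}$ up to sign. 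Care with signs and the direction of the duality pairing between $\vecH^{-1/2}(\mdiv)$ and $\vecH^{-1/2}(\mcurl)$ is the main bookkeeping challenge.

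For the converse direction, I assume $(\vecw,\vecv)\in\vecU$ solves (\ref{MaxwellETEVariational}) for all test pairs. The constraint $\nu\times\vecw = \nu\times\vecv$ on $\partial D$ is already encoded in membership in $\vecU$, so (\ref{WellposedETEMaxwell3}) holds automatically. To recover the PDEs, I first test against pairs $(\vecw', 0)$ and $(0, \vecv')$ with $\vecw', \vecv'$ compactly supported in the interior of $B_R\backslash\overline{D}$ (so all boundary pairings vanish); this yields (\ref{WellposedETEMaxwell1}) and (\ref{WellposedETEMaxwell2}) in the distributional sense. Then I test against general elements of $\vecU$ and integrate the now-established volume equations back by parts; matching the resulting boundary integrals on $\partial D$ against the term $\left<\vech, \vecv'_T\right>_{\partial D}$ recovers the transmission condition (\ref{WellposedETEMaxwell4}), while matching on $S_R$ recovers the Calderon conditions (\ref{WellposedETEMaxwell5})--(\ref{WellposedETEMaxwell6}). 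Here I expect the main obstacle to be the density and trace-theoretic justification: one must ensure test functions realizing arbitrary tangential data $\vecv'_T \in \vecH^{-1/2}(\mcurl,\partial D)$ exist within $\vecU$ so that the variational identity genuinely pins down $\vech$, and that the Calderon-operator boundary terms are correctly interpreted as duality pairings in $\vecH^{-1/2}(\mdiv, S_R)$. These are standard but delicate, and I would cite the trace and lifting results already invoked (c.f. \cite{KM,buffa2002traces}) rather than reprove them.
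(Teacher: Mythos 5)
Your proposal is correct and follows essentially the same route as the paper: test each equation, integrate by parts, use the constraint $\nu\times\vecw'=\nu\times\vecv'$ built into $\vecU$ together with the transmission condition \eqref{WellposedETEMaxwell4} to collapse the $\partial D$ terms, and for the converse recover the PDEs distributionally with compactly supported test functions before identifying the boundary conditions via the duality pairings. Your explicit remark that $A=\vecI$ near $S_R$ (so the natural boundary term there is $\hat{\bx}\times\mcurl\vecw$) is a detail the paper leaves implicit, but the argument is the same.
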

\begin{proof}
On one hand, assume that $(\vecw,\vecv)$ is a solution to the boundary value problem  (\ref{WellposedETEMaxwell1})-(\ref{WellposedETEMaxwell6}). Let $(\vecw',\vecv') \in \vecU$ be a test function. Multiply $\vecw'$ to equation (\ref{WellposedETEMaxwell1}) and integrate by parts ({first for smooth fields and then followed by a density argument}) to get
$$
\left( A\mcurl \vecw, { \mcurl}\vecw'\right) -k^2 \left(N \vecw,  \vecw' \right) + \left<G_{k} (\hat{x} \times \vecw),\vecw'_T\right>_{S_R} - \left<\nu \times A\mcurl \vecw,\vecw'_T\right>_{\partial D} = \left(\vecf_1,\vecw'\right).
$$
Multiply $\vecv'$ to equation (\ref{WellposedETEMaxwell2}) and integrate by parts to get
$$
\left( \mcurl \vecv, { \mcurl}\vecv'\right) -k^2 \left( \vecv,  \vecv' \right) + \left<G_{k} (\hat{x} \times \vecv),\vecv'_T\right>_{S_R} - \left<\nu \times \mcurl \vecv,\vecv'_T\right>_{\partial D} = \left(\vecf_2,\vecv'\right).
$$
Take the difference of the above two equations and note that $ \nu \times \vecw'=\nu\times\vecv'$ on $\partial D$, we can derive
\begin{eqnarray*}
&& \left(A\mcurl \vecw, \mcurl \vecw' \right) -\left(\mcurl \vecv, \mcurl \vecv' \right) - k^2 \left(N \vecw,  \vecw' \right) +k^2  \left(\vecv,  \vecv' \right) \\
&+& \left<G_{k} (\hat{x} \times \vecw),\vecw_T'\right>_{S_R} -\left<G_{k} (\hat{x} \times \vecv),\vecv'_T\right>_{S_R} \\
&=&  \left(\vecf_1,  \vecw' \right) -  \left(\vecf_2,  \vecv' \right) + \left<\vech,\vecv'_T\right>_{\partial D}.
\end{eqnarray*}
This yields the variational formulation (\ref{MaxwellETEVariational}).

On the other hand, if $(\vecw,\vecv)$ satisfies the variational formulation (\ref{MaxwellETEVariational}). Let $\vecv'=0$, then
$$
\left( A\mcurl \vecw, \mcurl \vecw'\right) -k^2 \left(N \vecw,  \vecw' \right) + \left<G_{k} (\hat{x} \times \vecw),\vecw'_T\right>_{S_R} - \left<\nu \times A \mcurl \vecw,\vecw'_T\right>_{\partial D} = \left(\vecf_1,\vecw'\right).
$$
Let $\vecw' \in \big(C_0^\infty(B_R \backslash \overline{D})\big)^3$, we have that $\vecw$ satisfies equation  (\ref{WellposedETEMaxwell1}) in the distributional sense. Then by duality pairing between  $\vecH^{-\frac{1}{2}}(\mdiv, S_R)$ and $\vecH^{-\frac{1}{2}}(\mcurl, S_R)$, we can derive equation  (\ref{WellposedETEMaxwell5}). In the exact same way, we can have $\vecv$ satisfies equations  (\ref{WellposedETEMaxwell2}) and (\ref{WellposedETEMaxwell6}). Finally we can have again from duality pairing between  $\vecH^{-\frac{1}{2}}(\mdiv, \partial D)$ and $\vecH^{-\frac{1}{2}}(\mcurl, \partial D)$ that equation (\ref{WellposedETEMaxwell4}) holds. This proves the lemma. 
\end{proof}
\subsection{T-coercivity approach}
Now we introduce the T-coercivity approach. Note that by means of the Riesz representation theorem we can define the operator ${\mathcal A}_k: \vecU \to \vecU$  by
$$
\left({\mathcal A}_k(\vecw,\vecv),(\vecw',\vecv')\right)_\vecU= a_k((\vecw,\vecv),(\vecw',\vecv')) \qquad \mbox{ for all } ((\vecw,\vecv),(\vecw',\vecv')) \in \vecU\times \vecU.
$$
The idea behind the T-coercivity method is to consider an equivalent formulation of (\ref{MaxwellETEVariational}) where $a_k$ is replaced by $a_k^{{\mathcal T}}$ defined by
\begin{equation}\label{forme T coercive}
a_k^{\mathcal T}((\vecw,\vecv),(\vecw',\vecv')):=a_k((\vecw,\vecv),{\mathcal T}(\vecw',\vecv')),\quad\forall ((\vecw,\vecv),(\vecw',\vecv'))\in \vecU \times \vecU
\end{equation}
with ${\mathcal T}$ being an \textit{ad hoc} isomorphism of $\vecU$. To show that ${\mathcal A}_k$ is an isomorphism on ${\vecU}$, it is sufficient to show that $a_k^{\mathcal T}$ is coercive.

Let $\Omega$ be a neighborhood (sufficiently close to $\partial D$) of $\partial D$. Setting  ${\mathcal N}:= (B_R \backslash \overline{D})\cap \Omega$, we  denote by
\begin{equation}\label{MaxwellAssumpETEAN}
\begin{array}{lll}
A_{*}:=\underset{x\in{\mathcal N}}{\mbox{ inf }}\underset{|\xi|=1}{\mbox{ inf }} \overline{\xi}\cdot A(x)\xi>0,  & A^*:=\underset{x\in {\mathcal N}}{\mbox{ sup }}\underset{|\xi|=1}{\mbox{ sup }}\overline{\xi}\cdot A(x)\xi <\infty,\ \\
N_{*}:=\underset{x\in{\mathcal N}}{\mbox{ inf }}\underset{|\xi|=1}{\mbox{ inf }} \overline{\xi}\cdot N(x)\xi>0,  & N^*:=\underset{x\in {\mathcal N}}{\mbox{ sup }}\underset{|\xi|=1}{\mbox{ sup }}\overline{\xi}\cdot N(x)\xi <\infty,\ \\
\end{array}
\end{equation}
for all ${\xi}\in\mathbb{C}^3$.
Then we can prove the following result.
\begin{lemma}\label{MaxwellETEInvertible}
Assume that either $A^*<1$ and ${N}^*<1$ or $A_*>1$ and ${N}_*>1$.  Then there exists $\kappa>0$ such that ${\mathcal A}_{i\kappa}$ is invertible.
\end{lemma}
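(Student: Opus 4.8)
The plan is to prove invertibility of $\mathcal{A}_{i\kappa}$ by the T-coercivity method: I will exhibit an isomorphism $\mathcal{T}$ of $\vecU$ for which the form $a_{i\kappa}^{\mathcal{T}}$ of \eqref{forme T coercive} is bounded and coercive, since then the standard equivalence (a bounded operator is an isomorphism iff its form is T-coercive for some isomorphism $\mathcal{T}$) together with Lax--Milgram gives the claim. Fix a smooth cutoff $\chi$ with $\chi\equiv 1$ on $\partial D$, $\mathrm{supp}\,\chi\subset\mathcal{N}$ and $0\le\chi\le1$. In the case $A_*>1$, $N_*>1$ I take the involution $\mathcal{T}(\vecw,\vecv):=(\vecw,\,-\vecv+2\chi\vecw)$; in the case $A^*<1$, $N^*<1$ I take the symmetric choice $\mathcal{T}(\vecw,\vecv):=(-\vecw+2\chi\vecv,\,\vecv)$. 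Because $\chi=1$ on $\partial D$, the constraint $\nu\times\vecw=\nu\times\vecv$ is preserved, so $\mathcal{T}$ maps $\vecU$ into $\vecU$; and since $\mathcal{T}^2=\mathrm{Id}$, it is an isomorphism.

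The core of the argument is the evaluation of $a_{i\kappa}^{\mathcal{T}}((\vecw,\vecv),(\vecw,\vecv))$. With $k=i\kappa$ one has $-k^2=\kappa^2>0$, so in the first case a direct substitution, using that $\chi$ vanishes near $S_R$ to simplify the Calderon terms, yields
\begin{eqnarray*}
a_{i\kappa}^{\mathcal{T}}((\vecw,\vecv),(\vecw,\vecv)) &=& (A\mcurl\vecw,\mcurl\vecw)+\kappa^2(N\vecw,\vecw)+\|\mcurl\vecv\|^2+\kappa^2\|\vecv\|^2 \\
&& -\,2(\mcurl\vecv,\mcurl(\chi\vecw))-2\kappa^2(\vecv,\chi\vecw) \\
&& +\,\langle G_{i\kappa}(\hat{\bx}\times\vecw),\vecw_T\rangle_{S_R}+\langle G_{i\kappa}(\hat{\bx}\times\vecv),\vecv_T\rangle_{S_R}.
\end{eqnarray*}
The four leading terms are coercive in view of the uniform positive-definiteness of $A$ and $N$. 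For the two boundary terms I would first establish the favorable sign of the Calderon quadratic form at imaginary wavenumber: applying the vector Green identity to the radiating solution $\vecu^e$ of $\mcurl^2\vecu^e-(i\kappa)^2\vecu^e=0$ in $\R^3\backslash\overline{B_R}$ with $\hat{\bx}\times\vecu^e=\hat{\bx}\times\vecu$ on $S_R$, together with the Silver--M\"uller decay, gives
$$
\langle G_{i\kappa}(\hat{\bx}\times\vecu),\vecu_T\rangle_{S_R}=\int_{\R^3\backslash\overline{B_R}}\left(|\mcurl\vecu^e|^2+\kappa^2|\vecu^e|^2\right)dx\ge 0,
$$
so both Calderon terms contribute nonnegatively and may be discarded from the lower bound.

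It then remains to absorb the cross terms, which are supported in $\mathcal{N}$. Writing $\mcurl(\chi\vecw)=\chi\mcurl\vecw+\nabla\chi\times\vecw$, the leading cross term is estimated by a weighted Young inequality
$$
2\,|(\mcurl\vecv,\chi\mcurl\vecw)|\le \frac{1}{t}\|\mcurl\vecv\|^2+t\!\int_{\mathcal{N}}\!\chi\,|\mcurl\vecw|^2\le \frac{1}{t}\|\mcurl\vecv\|^2+\frac{t}{A_*}(A\mcurl\vecw,\mcurl\vecw),
$$
and analogously $2\kappa^2|(\vecv,\chi\vecw)|\le \tfrac{\kappa^2}{s}\|\vecv\|^2+\tfrac{s}{N_*}\kappa^2(N\vecw,\vecw)$. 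Since $A_*>1$ and $N_*>1$, one may choose $t\in(1,A_*)$ and $s\in(1,N_*)$, so that the subtracted multiples leave strictly positive remainders of $\|\mcurl\vecv\|^2$, $(A\mcurl\vecw,\mcurl\vecw)$, $\kappa^2\|\vecv\|^2$ and $\kappa^2(N\vecw,\vecw)$. The residual lower-order term $2(\mcurl\vecv,\nabla\chi\times\vecw)$ is bounded by $\varepsilon\|\mcurl\vecv\|^2+C_\varepsilon\|\vecw\|_{\mathcal{N}}^2$ and is absorbed into these remainders once $\kappa$ is taken large enough --- this is exactly where the hypothesis that $\kappa$ be sufficiently large enters. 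One thereby obtains $a_{i\kappa}^{\mathcal{T}}((\vecw,\vecv),(\vecw,\vecv))\ge c\,\|(\vecw,\vecv)\|_{\vecU}^2$; the case $A^*<1$, $N^*<1$ is identical with the roles of $\vecw$ and $\vecv$ interchanged, the contrast $A^*<1$, $N^*<1$ now supplying the admissible window for the Young parameters. By Lax--Milgram the coercive form $a_{i\kappa}^{\mathcal{T}}$ defines an isomorphism, whence $\mathcal{A}_{i\kappa}$ is invertible because $\mathcal{T}$ is an isomorphism of $\vecU$.

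The step I expect to be most delicate is not the cutoff bookkeeping but the interplay between the contrast conditions and the absorption: the admissible window $(1,A_*)$ (respectively $(1,N_*)$, or their counterparts in the second case) for the Young parameter is nonempty \emph{precisely} under the stated hypotheses, so verifying that the sign of $A-\vecI$ and $N-\vecI$ on $\mathcal{N}$ is what makes the two field energies comparable is the genuine crux; establishing the nonnegativity of the Calderon form at $k=i\kappa$ is the other technical point, though it is classical.
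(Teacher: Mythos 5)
Your proof is correct and follows essentially the same route as the paper: the T-coercivity method with a cutoff-based isomorphism $\mathcal{T}$, nonnegativity of the Calderon form at $k=i\kappa$, Young's inequality with parameters chosen in the window opened by the contrast assumptions, and absorption of the $\nabla\chi$ residual by taking $\kappa$ large. The only differences are cosmetic (your $\mathcal{T}$ for the case $A^*<1$, $N^*<1$ is the negative of the paper's $(\vecw-2\chi\vecv,-\vecv)$, which changes nothing), and your explicit check that $\mathcal{T}$ preserves the trace constraint defining $\vecU$ is a welcome detail the paper leaves implicit.
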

\begin{proof}
We first consider the case when  $A^*<1$ and $N^*<1$. Let $\chi \in C^{\infty}(\overline{B_R})$ be a cut off function equal to 1 in a neighborhood of $\partial D$  such that $|\chi|\le 1$, and $\chi=0$ in $\{B_R\backslash \overline{D}\} \backslash \overline{\mathcal N}$.
Let ${\mathcal T}(w,v)=(w-2\chi v,-v)$. We then have that
\begin{eqnarray}
a_{i\kappa}^{\mathcal T}((\vecw,\vecv),(\vecw,\vecv))
&=& (A \mcurl \vecw , \mcurl \vecw) + (\mcurl \vecv ,\mcurl \vecv) - 2(A \mcurl \vecw , \mcurl (\chi \vecv)) \nonumber \\
&+& {\kappa}^2(   (N \vecw,\vecw) +(\vecv,\vecv) - 2(N\vecw,\chi \vecv) ) +\left<G_{i{\kappa }} (\nu \times \vecw),\vecw_T\right>_{S_R} \nonumber \\
&+& \left<G_{i\kappa} (\nu \times \vecv),\vecv_T\right>_{S_R}-2\left<G_{i\kappa} (\nu \times \vecw),\chi \vecv_T\right>_{S_R}. \label{MaxwellETEInvertible_1}
\end{eqnarray}
By Young's inequality we have that
\begin{eqnarray}
2|(A\mcurl \vecw, \mcurl (\chi \vecv))|
&\le&  2 |(A\mcurl \vecw, \chi \mcurl \vecv)|+ 2 |(A\mcurl \vecw, \nabla \chi \times \vecv)| \nonumber \\
&\le& \alpha(A\mcurl \vecw, \mcurl \vecw)_{\mathcal N}+\alpha^{-1}(A\mcurl \vecv, \mcurl \vecv)_{\mathcal N} \nonumber \\
&+& \beta (A\mcurl \vecw,\mcurl \vecw)_{\mathcal N}+\beta^{-1}(A\nabla \chi \times \vecv, \nabla \chi \times \vecv)_{\mathcal N}, \qquad \label{MaxwellETEInvertible_2}
\end{eqnarray}
where $(\cdot,\cdot)_{\mathcal N}$ denotes the $\vecL^2$ inner product in $\mathcal N$ and $\alpha>0$, $\beta>0$ are constants to be chosen later. Similarly we have that
\begin{equation}
2|(N \vecw,\chi \vecv)|
\le 2|(N \vecw,\vecv)_{\mathcal N}| \le \eta(N \vecw,\vecw)_{\mathcal N}+\eta^{-1}(N\vecv,\vecv)_{\mathcal N}
 \label{MaxwellETEInvertible_3}
\end{equation}
for some constant $\eta>0$ to be chosen later.

Since $k=i \kappa$, we have that $\vecw$ and $\vecv$ decay exponentially when  $|\bx| \to \infty$. This yields
\begin{equation}
\left<G_{i{\kappa }} (\nu \times \vecw),\vecw_T\right>_{S_R}=\int\limits_{{\mathbb R}^3\setminus \overline{B_R}}\left(|\mcurl \vecw|^2+\kappa^2|\vecw|^2\right)\,d\bx \ge 0, \label{MaxwellETEInvertible_4}
\end{equation}
and similarly  $\left<G_{i{\kappa }} (\nu \times \vecv),\vecv_T\right>_{S_R} \ge 0$. Note that $\chi$ is compactly supported near $\partial D$, then
\begin{equation}
\left< G_{i\kappa} (\nu \times \vecw),\chi \vecv_T\right>_{S_R}=0. \label{MaxwellETEInvertible_5}
\end{equation}
From \eqref{MaxwellETEInvertible_1} -- \eqref{MaxwellETEInvertible_5}, we can obtain that
\begin{eqnarray*}
\hspace{-2.522cm}|a_{i\kappa}^{\mathcal T}((\vecw,\vecv),(\vecw,\vecv))|
&\ge&  (A \mcurl \vecw , \mcurl \vecw)_{\{B_R\backslash \overline{D}\} \backslash \overline{\mathcal N}}+ (\mcurl \vecv ,\mcurl \vecv)_{\{B_R\backslash \overline{D}\} \backslash \overline{\mathcal N}}\\
&+& \kappa^2\left(  (N \vecw,\vecw)_{\{B_R\backslash \overline{D}\} \backslash \overline{\mathcal N}}+(\vecv,\vecv)_{\{B_R\backslash \overline{D}\} \backslash \overline{\mathcal N}}\right) \\
&+& (1-\alpha-\beta)(A \mcurl \vecw , \mcurl \vecw)_{\mathcal N}+((I-\alpha^{-1}A)\mcurl \vecv ,\mcurl \vecv)_{\mathcal N}\\
&+& \kappa^2(1-\eta)(N\vecw,\vecw)_{\mathcal N}+(\kappa^2(1-\eta^{-1}N)-\sup |\nabla \chi|^2A^* \beta^{-1})\vecv,\vecv)_{\mathcal N}.
\end{eqnarray*}
Taking $\alpha,\beta,\eta, \kappa$ such that $A^*<\alpha<1$, $N^*<\eta<1$, and $\beta < 1-\alpha$, then $a_{i\kappa}^{\mathcal T}$ is coercive for sufficiently large $\kappa$.

The case when   $A^*>1$ and $N^*>1$ can be proven the same way using ${\mathcal T}(\vecw,\vecv)=(\vecw,-\vecv+2\chi \vecw)$. This proves the lemma. 
\end{proof}

Assume that $A$ and $N$ satisfy the assumptions in Lemma \ref{MaxwellETEInvertible}.  Let $k=i\kappa$ where $\kappa \in \R$ is sufficiently large. Then from Lemma \ref{MaxwellETEInvertible}, there is a unique solution $(\vecw,\vecv)$ to \eqref{WellposedETEMaxwell1} -- \eqref{WellposedETEMaxwell6}, and
\begin{eqnarray*}
&&||\vecw||_{\vecH(\mcurl,B_R \backslash \overline{D})}+||\vecv||_{\vecH(\mcurl, B_R \backslash \overline{D})} \\
&\le& C\left( ||\vecg||_{ \vecH^{-\frac{1}{2}}(\mdiv, \partial D)}+||\vech||_{ \vecH^{-\frac{1}{2}}(\mdiv, \partial D)} + ||\vecf_1||_{\vecL^2(B_R \backslash \overline{D})} +||\vecf_2||_{\vecL^2(B_R \backslash \overline{D})} \right).
\end{eqnarray*}
Let $\widetilde{\vecw}$ and $\widetilde{\vecv}$ be the solution to the exterior transmission problem (\ref{ETEMaxwell1})-(\ref{ETEMaxwell6}). Recall that $\vecw= \widetilde{\vecw}$, $\vecv= \widetilde{\vecv} + \vecv_\ell$, ${\vecf}_{1,2}=\widetilde{\vecf}_{1,2}$, $\vech= \widetilde{\vech} - \nu \times \mcurl \vecv_\ell$, and the lifting function $\vecv_\ell$ satisfies equation (\ref{MaxwellLifting}), then we can immediately obtain the following theorem.
\begin{theorem}\label{MaxwellETEFredholm}
Assume that $A$ and $N$ satisfy the assumptions in Lemma \ref{MaxwellETEInvertible}.  Let $k=i\kappa$ where $\kappa \in \R$ is sufficiently large. Then the exterior transmission problem (\ref{ETEMaxwell1})-(\ref{ETEMaxwell6}) has a unique solution which depends continuously on the data $\widetilde{\vecf}_1$, $\widetilde{\vecf}_2$, $\widetilde{\vecg}$ and $\widetilde{\vech}$
\begin{eqnarray*}
&&||\widetilde{\vecw}||_{\vecH(\mcurl,B_R \backslash \overline{D})}+||\widetilde{\vecv}||_{\vecH(\mcurl, B_R \backslash \overline{D})} \\
&\le& C\left( ||\widetilde{\vecg}||_{ \vecH^{-\frac{1}{2}}(\mdiv, \partial D)}+||\widetilde{\vech}||_{ \vecH^{-\frac{1}{2}}(\mdiv, \partial D)} + ||\widetilde{\vecf}_1||_{\vecL^2(B_R \backslash \overline{D})} +||\widetilde{\vecf}_2||_{\vecL^2(B_R \backslash \overline{D})} \right),
\end{eqnarray*}
where $C>0$ is a constant independent of $\widetilde{\vecf}_1$, $\widetilde{\vecf}_2$, $\widetilde{\vecg}$ and $\widetilde{\vech}$.\end{theorem}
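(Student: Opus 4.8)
The plan is to read the theorem off as a direct consequence of the invertibility of $\mathcal{A}_{i\kappa}$ established in Lemma \ref{MaxwellETEInvertible}, and then to transfer the resulting estimate for the bounded-domain problem (\ref{WellposedETEMaxwell1})--(\ref{WellposedETEMaxwell6}) back to the original unbounded problem (\ref{ETEMaxwell1})--(\ref{ETEMaxwell6}) through the lifting $\vecv_\ell$. First I would fix $\kappa$ large enough that Lemma \ref{MaxwellETEInvertible} applies, so that $\mathcal{A}_{i\kappa}:\vecU\to\vecU$ is an isomorphism. Since the preceding lemma shows that the variational problem (\ref{MaxwellETEVariational}) is equivalent to (\ref{WellposedETEMaxwell1})--(\ref{WellposedETEMaxwell6}), this immediately yields a unique $(\vecw,\vecv)\in\vecU$ with $\|(\vecw,\vecv)\|_\vecU \le \|\mathcal{A}_{i\kappa}^{-1}\|\,\|F\|_{\vecU'}$, where $F$ is the antilinear functional on the right-hand side of (\ref{MaxwellETEVariational}); uniqueness is then just the injectivity of $\mathcal{A}_{i\kappa}$.

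Next I would estimate $\|F\|_{\vecU'}$. Since $F(\vecw',\vecv') = (\vecf_1,\vecw') - (\vecf_2,\vecv') + \langle \vech,\vecv'_T\rangle_{\partial D}$, the Cauchy--Schwarz inequality handles the first two terms, while for the boundary term I would use $\vecv'_T\in\vecH^{-\frac12}(\mcurl,\partial D)$ together with the continuity of the tangential trace on $\vecH(\mcurl, B_R\backslash\overline{D})$ to get $|\langle \vech,\vecv'_T\rangle_{\partial D}|\le C\,\|\vech\|_{\vecH^{-\frac12}(\mdiv,\partial D)}\,\|\vecv'\|_{\vecH(\mcurl)}$. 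This gives $\|F\|_{\vecU'}\le C\big(\|\vecf_1\|_{\vecL^2(B_R\backslash\overline{D})}+\|\vecf_2\|_{\vecL^2(B_R\backslash\overline{D})}+\|\vech\|_{\vecH^{-\frac12}(\mdiv,\partial D)}\big)$, which is exactly the estimate on $(\vecw,\vecv)$ recorded in the paragraph preceding the theorem.

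Finally I would undo the lifting. Recalling $\widetilde{\vecw}=\vecw$, $\widetilde{\vecv}=\vecv-\vecv_\ell$, $\widetilde{\vecf}_{1,2}=\vecf_{1,2}$ and $\widetilde{\vech}=\vech+\nu\times\mcurl\vecv_\ell$, the triangle inequality gives $\|\widetilde{\vecw}\|_{\vecH(\mcurl)}+\|\widetilde{\vecv}\|_{\vecH(\mcurl)}\le \|\vecw\|_{\vecH(\mcurl)}+\|\vecv\|_{\vecH(\mcurl)}+\|\vecv_\ell\|_{\vecH(\mcurl)}$, while $\|\vech\|_{\vecH^{-\frac12}(\mdiv,\partial D)}\le \|\widetilde{\vech}\|_{\vecH^{-\frac12}(\mdiv,\partial D)}+\|\nu\times\mcurl\vecv_\ell\|_{\vecH^{-\frac12}(\mdiv,\partial D)}\le \|\widetilde{\vech}\|_{\vecH^{-\frac12}(\mdiv,\partial D)}+C\,\|\vecv_\ell\|_{\vecH(\mcurl)}$, where the last step uses $\mcurl^2\vecv_\ell=k^2\vecv_\ell$ to bound $\mcurl\vecv_\ell$ in $\vecH(\mcurl)$ and hence its trace. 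The lifting bound (\ref{MaxwellLifting}) controls $\|\vecv_\ell\|_{\vecH(\mcurl,B_R\backslash\overline{D})}$ by $M\,\|\widetilde{\vecg}\|_{\vecH^{-\frac12}(\mdiv,\partial D)}$. Chaining these inequalities produces the asserted estimate with right-hand side $C\big(\|\widetilde{\vecg}\|+\|\widetilde{\vech}\|+\|\widetilde{\vecf}_1\|+\|\widetilde{\vecf}_2\|\big)$.

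I expect the main difficulty not to lie in any single estimate above—each is routine once Lemma \ref{MaxwellETEInvertible} is in hand—but rather in the bookkeeping that makes the bounded reformulation genuinely equivalent to the unbounded exterior transmission problem. Concretely, one must verify that the pair $(\widetilde{\vecw},\widetilde{\vecv})$, obtained by extending the variational solution outside $B_R$ via the radiating field encoded by the exterior Calderon operator $G_{i\kappa}$, really satisfies the Silver--M\"uller radiation conditions (\ref{ETEMaxwell5})--(\ref{ETEMaxwell6}), and that $\nu\times\mcurl\vecv_\ell$ is an admissible datum in $\vecH^{-\frac12}(\mdiv,\partial D)$. Both points rest on the mapping properties of $G_k$ and the existence of the lifting quoted from \cite{KM} at the beginning of this section, so the argument reduces to invoking those properties carefully rather than to a new estimate.
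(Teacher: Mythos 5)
Your proposal is correct and follows essentially the same route as the paper: the paper also obtains the theorem as an immediate consequence of Lemma \ref{MaxwellETEInvertible} applied to the bounded reformulation (\ref{WellposedETEMaxwell1})--(\ref{WellposedETEMaxwell6}), followed by undoing the substitution $\vecv=\widetilde{\vecv}+\vecv_\ell$, $\vech=\widetilde{\vech}-\nu\times\mcurl\vecv_\ell$ using the lifting bound (\ref{MaxwellLifting}). Your explicit estimate of $\|F\|_{\vecU'}$ and of $\|\nu\times\mcurl\vecv_\ell\|_{\vecH^{-\frac{1}{2}}(\mdiv,\partial D)}$ via $\mcurl^2\vecv_\ell=k^2\vecv_\ell$ fills in details the paper leaves implicit, but the argument is the same.
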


\begin{remark} \label{InhomogeneousCaderon}
If we replace (\ref{WellposedETEMaxwell5}) and (\ref{WellposedETEMaxwell6}) by
\begin{eqnarray}
\hat{x} \times  \mcurl \vecw   - G_{k} \hat{x} \times \vecw =\vech_1 \quad &\mbox{on}& \quad S_R,  \label{WellposedETEMaxwell5_1}\\
\hat{x} \times  \mcurl \vecv   - G_{k} \hat{x} \times \vecv =\vech_2 \quad &\mbox{on}& \quad S_R, \label{WellposedETEMaxwell6_1}
\end{eqnarray}
where $\vech_{1,2} \in \vecH^{-\frac{1}{2}}(\mdiv, S_R)$, we can see from Lemma \ref{MaxwellETEInvertible} and Theorem \ref{MaxwellETEFredholm} that the new exterior transmission problem (where we replace (\ref{WellposedETEMaxwell5}) -- (\ref{WellposedETEMaxwell6}) by (\ref{WellposedETEMaxwell5_1}) -- (\ref{WellposedETEMaxwell6_1}))   has a unique solution that depends continuously on the data ($k=i \kappa$ and $\kappa$ is sufficiently large).
\end{remark}
\section{Uniqueness of the inverse problem} \label{Uniqueness}
In order to show that boundary $\partial D$ is uniquely determined by the measurements $\vecE^s(\bx,\by,\vecp)$ for all $\bx \in \Sigma$, $\by \in \Sigma$ and $\vecp \in \R^3$. We follow the approach made in \cite{ CaCo1, CaCoMe, Hah}. 
\subsection{Reciprocity relation}
First of all, we need the following reciprocity relation to prove the uniqueness. The reciprocity relation also plays an important role in the linear sampling method discussed later.
\begin{lemma} \label{ReciprocityMaxwell}
Let $\vecE^s(\cdot,\by,\vecp)$ be the scattered electric field satisfying equations  (\ref{Scattered})-(\ref{SilverMuller}) corresponding to $\vecE^i(\cdot, \by, \vecp)$, then the following reciprocity relation holds
$$
\vecp \cdot \vecE^s(\bx_p,\bx_q,\vecq) = \vecq \cdot \vecE^s(\bx_q,\bx_p,\vecp)
$$
for $\bx_p, \bx_q \in D$ and $\vecp, \vecq \in \R^3$.
\end{lemma}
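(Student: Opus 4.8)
The reciprocity relation is a symmetry statement of the form $\vecp \cdot \vecE^s(\bx_p,\bx_q,\vecq) = \vecq \cdot \vecE^s(\bx_q,\bx_p,\vecp)$, and the standard route to such identities is to play the two scattering solutions against each other in a suitable bilinear (symmetric) pairing and integrate over a large bounded region. The plan is to work with the \emph{total} fields rather than the scattered fields, and to exploit the symmetry of the operator $\mcurl(A\,\mcurl\,\cdot) - k^2 N(\cdot)$ arising from the assumption that $A$ and $N$ are symmetric matrices. Concretely, I would set $\vecE_p := \vecE(\cdot,\bx_p,\vecp)$ and $\vecE_q := \vecE(\cdot,\bx_q,\vecq)$, the two total fields, and consider the antisymmetric combination that a Green-type (second vector) identity produces.

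First I would apply a vector Green's identity to the pair $(\vecE_p,\vecE_q)$ on the domain $B_R \setminus \overline{D}$, where the governing equation is $\mcurl(A\mcurl \vecE) - k^2 N\vecE = 0$. Because $A$ and $N$ are symmetric, the volume contributions
$$
\int_{B_R\setminus\overline{D}} \big[ (A\mcurl \vecE_p)\cdot \mcurl \vecE_q - (A\mcurl \vecE_q)\cdot \mcurl \vecE_p \big]\,d\bx
$$
and the corresponding $k^2 N$ terms cancel identically, leaving only boundary terms on $\partial D$ and on $S_R$. The outer boundary term on $S_R$ vanishes by the Silver--M\"uller radiation condition \eqref{TotalEqn5}: two radiating solutions paired in the symmetric bilinear form (no conjugation) give zero flux at infinity, exactly as in the classical Rellich/reciprocity computation. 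Next I would perform the analogous computation inside $D$, where the total field solves $\mcurl^2 \vecE - k^2 \vecE = 0$ together with the singular incident dipole; here the key input is that $\vecE^i(\cdot,\by,\vecp) = \vecG(\cdot,\by,\vecp)$ is built from the free-space Green's tensor, whose own symmetry $\vecp\cdot\vecG(\bx,\by,\vecq) = \vecq\cdot\vecG(\by,\bx,\vecp)$ converts the residues at the dipole points $\bx_p,\bx_q$ into precisely the two sides of the claimed identity.

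The mechanism, then, is that the transmission conditions \eqref{TotalEqn3}--\eqref{TotalEqn4} glue the interior and exterior computations together: the boundary terms on $\partial D$ coming from the exterior Green's identity match those from the interior identity (using $\nu\times\vecE$ and $\nu\times A\mcurl\vecE$ continuity across $\partial D$), so the $\partial D$ contributions cancel when the two regions are combined. What survives is the difference of the two dipole residues, and setting it to zero yields the reciprocity relation. I would invoke the regularity guaranteed by Lemma \ref{MaxwellTransmissionRegularity} and Remark \ref{MaxwellTransmissionRegularityRemark} to ensure the integrations by parts and trace pairings are justified (the fields lie in $\vecH^1$ piecewise and $A\mcurl\vecE^s$ has the requisite regularity, so the boundary dualities are genuine integrals).

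The step I expect to be the main obstacle is the careful bookkeeping of the dipole singularities. Unlike a plane-wave incident field, the dipole $\vecG(\cdot,\by,\vecp)$ is singular at $\by\in D$, so the interior identity must be applied on $D$ with small balls excised around $\bx_p$ and $\bx_q$, and one must take the limit as those radii shrink to zero, tracking the singular contributions to show they assemble into $\vecp\cdot\vecE^s(\bx_p,\bx_q,\vecq)$ and $\vecq\cdot\vecE^s(\bx_q,\bx_p,\vecp)$. This requires the local expansion of $\mcurl\mcurl\Phi$ near the source and the symmetry of the free-space Green's tensor; handling the interaction between the scattered-field part (smooth near the sources, hence contributing the desired dotted-scattered-field values) and the singular incident part (whose self-interaction must cancel) is where the computation is most delicate, though each piece is classical once set up correctly.
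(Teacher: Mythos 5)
Your proposal follows essentially the same route as the paper's proof: the interior Green's identity with the dipole residues produces the difference $\vecp\cdot\vecE^s(\bx_p,\bx_q,\vecq)-\vecq\cdot\vecE^s(\bx_q,\bx_p,\vecp)$ as a boundary integral over $\partial D$ (the paper delegates this residue computation to Theorem 2.1 of \cite{ZCS} rather than excising small balls explicitly), and then the transmission conditions together with the exterior Green's identity in $B_R\setminus\overline{D}$ --- using the symmetry of $A$ and $N$, equation \eqref{TotalEqn2}, and the Silver--M\"uller radiation condition --- show that this boundary integral vanishes. All the key ingredients you identify are exactly the ones the paper uses, so the approach is correct and matches.
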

\begin{proof}
For any $\bx_p, \bx_q \in D$ and $\vecp, \vecq \in \R^3$, let  $\vecE(\cdot,\by,\vecp)=\vecE^s(\cdot,\by,\vecp)+\vecE^i(\cdot,\by,\vecp)$ be the total electric field. We can directly derive that (c.f. the proof of Theorem 2.1 in \cite{ZCS})
\begin{eqnarray*}
&&\hspace{-2.522cm}\vecp \cdot \vecE^s(\bx_p,\bx_q,\vecq) - \vecq \cdot \vecE^s(\bx_q,\bx_p,\vecp) \\
&&\hspace{-2.522cm}=\int_{\partial D} \left(\nu \times \vecE(\by,\bx_q,\vecq) \right) \cdot \mcurl \vecE(\by,\bx_p,\vecp) ds(\by) - \int_{\partial D} \left( \nu \times \vecE(\by,\bx_p,\vecp) \right) \cdot \mcurl \vecE(\by,\bx_q,\vecq) ds(\by).
\end{eqnarray*}
From the continuity of the wave fields across the boundary, we have that
\begin{eqnarray*}
\nu \times \vecE^+ = \nu \times \vecE^- \quad &\mbox{on}& \quad \partial D, \\
\nu \times A \mcurl \vecE^+  = \nu \times \mcurl\vecE^- \quad &\mbox{on}& \quad \partial D,
\end{eqnarray*}
where $\nu \times \vecE^+$ denotes the tangential trace of $\vecE|_{\R^3 \backslash \overline{D}}$ and  $\nu \times \vecE^-$ denotes the tangential trace of $\vecE|_{D}$ (similar definitions hold for $\nu \times A \mcurl \vecE^+$ and $\nu \times \mcurl \vecE^-$).  Now we can derive
\begin{eqnarray*}
&&\hspace{-2.522cm}\vecp \cdot \vecE^s(\bx_p,\bx_q,\vecq) - \vecq \cdot \vecE^s(\bx_q,\bx_p,\vecp) \\
&&\hspace{-2.522cm}=\int_{\partial D} \bigg( \nu \times \vecE^+(\by,\bx_q,\vecq) \cdot A\mcurl \vecE^+(\by,\bx_p,\vecp) - \nu \times \vecE^+(\by,\bx_p,\vecp) \cdot A\mcurl \vecE^+(\by,\bx_q,\vecq) \bigg)ds(\by).
\end{eqnarray*}
{ Note that $\vecE(\by,\bx_q,\vecq)$ and $\vecE(\by,\bx_p,\vecp)$ both satisfy equation \eqref{TotalEqn2}, therefore we have from integration by parts (first for smooth fields and then followed by a density argument) that}
\begin{eqnarray*}
&&\hspace{-2.122cm}\int_{\partial D} \bigg( \nu \times \vecE^+(\by,\bx_q,\vecq) \cdot A\mcurl \vecE^+(\by,\bx_p,\vecp) - \nu \times \vecE^+(\by,\bx_p,\vecp) \cdot A\mcurl \vecE^+(\by,\bx_q,\vecq) \bigg)ds(\by) \\
&&\hspace{-2.122cm} =\int_{\partial B_R} \bigg( \nu \times \vecE(\by,\bx_q,\vecq) \cdot \mcurl \vecE(\by,\bx_p,\vecp) - \nu \times \vecE(\by,\bx_p,\vecp) \cdot \mcurl \vecE(\by,\bx_q,\vecq) \bigg)ds(\by) \\
&&\hspace{-2.122cm} + \int_{B_R \backslash \overline{D}} \bigg( \mcurl A \mcurl \vecE(\by,\bx_p,\vecp) \vecE(\by,\bx_q,\vecq) - \mcurl A \mcurl \vecE(\by,\bx_q,\vecq) \vecE(\by,\bx_p,\vecp) \bigg)d \by \\
&&\hspace{-2.122cm} - \int_{B_R \backslash \overline{D}} \bigg( A \mcurl \vecE(\by,\bx_p,\vecp) \mcurl \vecE(\by,\bx_q,\vecq) - A \mcurl \vecE(\by,\bx_q,\vecq) \mcurl \vecE(\by,\bx_p,\vecp) \bigg)d\by.
\end{eqnarray*}
Note that $A$ and $N$ are symmetric, we have the third integral in the above equation is zero. Note that $\vecE$ satisfies equation (\ref{TotalEqn2}) and $N$ is symmetric, we have that the second integral in the above equation is zero. Note that $\vecE$ satisfies the Silver-M{\"u}ller radiation condition, we have that the first integral in the above equation is zero. Now we can conclude that
$$
\vecp \cdot \vecE^s(x_\vecp,x_\vecq,\vecq) = \vecq \cdot \vecE^s(x_\vecq,x_\vecp,\vecp).
$$
This completes the proof. 
\end{proof}
\subsection{Uniqueness}
To prove the uniqueness result, we { first} show the following lemma.
\begin{lemma}\label{Maxwellintgone1} Assume that $A$ and $N$ satisfy the assumptions in Lemma \ref{MaxwellETEInvertible}.  For $n\in{\mathbb N}$, let $\{(\vecw_n,\,\vecv_n)\}\in \vecH_{loc}(\mcurl, \R^3 \backslash \overline{D}))\times
\vecH_{loc}(\mcurl, \R^3 \backslash \overline{D}))$, be a sequence of solutions to the
exterior transmission problem (\ref{ETEMaxwell1})-(\ref{ETEMaxwell6}) with
boundary data $\vecg_n\in \vecH^{-\frac{1}{2}}(\mdiv, \partial D)$ and $\vech_n\in
\vecH^{-\frac{1}{2}}(\mdiv, \partial D)$. If the sequences $\{\vecg_n\}$ and $\{\vech_n\}$ both
converge in $\vecH^{-\frac{1}{2}}(\mdiv, \partial D)$ , and the sequences $\{(\vecw_n|_\Omega,\mcurl \vecw_n|_\Omega)\}$ and $\{(\vecv_n|_\Omega,\mcurl \vecv_n|_\Omega)\}$ are
bounded in $\vecH^1(\Omega) \times \vecH^1(\Omega)$ for $\Omega=B_R\backslash\overline{D_1}$ and $\Omega=D_1 \backslash\overline{D}$, then there exists a convergent subsequence $\{(\vecw_{n_k}, \vecv_{n_k})\}_{k=1}^\infty$ in $\vecH(\mcurl, B_R\backslash\overline{D}) \times \vecH(\mcurl, B_R\backslash\overline{D})$.
\end{lemma}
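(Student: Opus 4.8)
The plan is to convert the $\vecH^1$-regularity assumed in the hypotheses into compactness, since the embedding $\vecH(\mcurl, \Omega) \hookrightarrow \vecL^2(\Omega)$ is \emph{not} compact and boundedness in $\vecH(\mcurl)$ alone can never produce a convergent subsequence. The domain is split as $B_R \backslash \overline{D} = (D_1 \backslash \overline{D}) \cup \partial D_1 \cup (B_R \backslash \overline{D_1})$ into the inhomogeneous shell and the homogeneous exterior annulus, and on each piece the hypotheses supply $\vecH^1$-bounds for the fields \emph{and} for their curls. On each piece I would extract $\vecL^2$-convergent subsequences of the fields and of the curls via Rellich--Kondrachov, thereby obtaining convergence in $\vecH(\mcurl)$ on each piece, and then glue the pieces together across the interface $\partial D_1$.

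First I would fix $\Omega$ to be either $D_1 \backslash \overline{D}$ or $B_R \backslash \overline{D_1}$. Both are bounded Lipschitz (indeed $C^2$) domains, so the embedding $\vecH^1(\Omega) \hookrightarrow \vecL^2(\Omega)$ is compact. Since $\{\vecw_n|_\Omega\}$ and $\{\mcurl \vecw_n|_\Omega\}$ are bounded in $\vecH^1(\Omega)$, I can extract a subsequence along which $\vecw_n \to \vecw$ and $\mcurl \vecw_n \to \vecq$ in $\vecL^2(\Omega)$. Passing to the limit in the distributional identity $\int_\Omega \vecw_n \cdot (\mcurl \boldsymbol{\phi})\,d\bx = \int_\Omega (\mcurl \vecw_n) \cdot \boldsymbol{\phi}\,d\bx$, valid for all $\boldsymbol{\phi} \in (C_0^\infty(\Omega))^3$, identifies $\vecq = \mcurl \vecw$, hence $\vecw_n \to \vecw$ in $\vecH(\mcurl, \Omega)$. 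Applying this to $\vecv_n$ as well and to both choices of $\Omega$, and passing to a common subsequence after finitely many successive extractions, I obtain one subsequence $\{(\vecw_{n_k}, \vecv_{n_k})\}$ that converges in $\vecH(\mcurl, D_1 \backslash \overline{D})$ and in $\vecH(\mcurl, B_R \backslash \overline{D_1})$, for both components.

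The remaining and decisive step is to glue these piecewise limits into genuine limits in $\vecH(\mcurl, B_R \backslash \overline{D})$. Writing $\vecw$ for the function that equals the two subdomain limits, a priori it is only piecewise in $\vecH(\mcurl)$, and membership in $\vecH(\mcurl, B_R \backslash \overline{D})$ requires that the tangential traces of $\vecw$ from the two sides of $\partial D_1$ coincide; otherwise a spurious surface current would appear in the global curl. Here I would use that each $\vecw_{n_k}$ lies in $\vecH_{loc}(\mcurl, \R^3 \backslash \overline{D})$, so that its tangential traces on $\partial D_1$ from either side already agree. Because the tangential trace is continuous from $\vecH(\mcurl, \Omega)$ into $\vecH^{-\frac{1}{2}}(\mdiv, \partial D_1)$, the $\vecH(\mcurl)$-convergence on each subdomain passes this agreement to the limit, so $\vecw \in \vecH(\mcurl, B_R \backslash \overline{D})$ with $\mcurl \vecw$ given by the piecewise curls; the same argument applies to $\vecv$. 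Combining the $\vecL^2$-convergence of the fields and of their curls over the two subdomains then yields $\vecw_{n_k} \to \vecw$ and $\vecv_{n_k} \to \vecv$ in $\vecH(\mcurl, B_R \backslash \overline{D})$. I expect this interface-gluing argument, together with the realization that the curls (not merely the fields) must be controlled in $\vecH^1$, to be the crux; the convergence of the data $\vecg_n, \vech_n$ on $\partial D$ enters the surrounding uniqueness argument but is not itself needed for the compactness extraction.
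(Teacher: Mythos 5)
Your proof is correct, but it follows a genuinely different route from the paper's. You use the full strength of the hypothesis that \emph{both} $\vecw_n,\vecv_n$ \emph{and} their curls are bounded in $\vecH^1$ on each subdomain: Rellich--Kondrachov then gives strong $\vecL^2$ convergence of fields and curls separately, the distributional identity identifies the limit of the curls with the curl of the limit, and the two subdomains are glued across $\partial D_1$ (your trace-matching step is valid, though it is even simpler than you make it --- since $\vecw_{n_k}\to\vecw$ and $\mcurl\vecw_{n_k}\to\vecq$ both converge in $\vecL^2$ of the \emph{whole} set $B_R\backslash\overline{D}$, the distributional curl of $\vecw$ on the whole set is already forced to equal $\vecq\in\vecL^2$, so no spurious surface term can appear). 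The paper instead applies Rellich only to the fields themselves (not their curls), uses the $\vecH^1$ bound on $\mcurl\vecw_{n_k}$ near $S_R$ together with the compact embedding $H^{1/2}(S_R)\hookrightarrow H^{-1/2}(S_R)$ to get convergence of the Calderon terms $(G_k-G_{i\kappa})(\hat{\bx}\times\vecw_{n_k})$, rewrites the equations at the coercive frequency $i\kappa$ with the differences moved to the right-hand side, and then invokes the stability estimate of Lemma \ref{MaxwellETEInvertible} / Remark \ref{InhomogeneousCaderon} to upgrade to $\vecH(\mcurl)$ convergence of the full solution pair; this is why the paper needs the convergence of $\vecg_n$ and $\vech_n$, which, as you correctly observe, your argument does not use at all. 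Your approach is shorter and purely a compactness statement, independent of the T-coercivity machinery; the paper's approach is the one that would survive under weaker volume regularity (only $\vecL^2$-compactness of the fields entering the right-hand sides is really needed there) and it reuses the well-posedness framework already established in Section \ref{MaxwellETE}.
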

\begin{proof}
Let $\left\{(\vecw_n,\vecv_n)\right\}$ be the sequence stated in the lemma. Since the sequences $\{\vecw_n|_\Omega\}$ and $\{\vecv_n|_\Omega\}$ are
bounded in $\vecH^1(\Omega)$ for $\Omega=B_R\backslash\overline{D_1}$ and $\Omega=D_1 \backslash\overline{D}$, then from the compact embedding of
$H^1(\Omega)$ into $L^2(\Omega)$, we can select  convergent subsequences
$\{\vecw_{n_k}\}$ and $\{\vecv_{n_k}\}$ in both $\vecL^2(B_R\backslash\overline{D_1})$ and $\vecL^2(D_1 \backslash\overline{D})$.

Note that $\mdiv(\nu \times \vecw_{n_k}) = -\nu \cdot \mcurl \vecw_{n_k}$, and $\{(\vecw_{n_k}|_\Omega,\mcurl \vecw_{n_k}|_\Omega)\}$ is a bounded sequence in $\vecH^1(\Omega) \times \vecH^1(B_R\backslash\overline{D_1})$, then from the compact embedding of
$H^{\frac{1}{2}}(S_R)$ into $H^{-\frac{1}{2}}(S_R)$, we can select a subsequence, still denote as $\{\vecw_{n_k}\}$ such that $\hat{\bx} \times \vecw_{n_k}|_{S_R}$ converges in $\vecH^{-\frac{1}{2}}(\mdiv, S_R)$. Note that $G_k-G_{i\kappa}$ are bounded from $\vecH^{-\frac{1}{2}}(\mdiv, S_R)$ to $\vecH^{-\frac{1}{2}}(\mdiv, S_R)$, then $\left( G_{k}-G_{i\kappa} \right) (\hat{\bx} \times \vecw_{n_k})$ converges in $\vecH^{-\frac{1}{2}}(\mdiv, S_R)$. Similarly we can have a subsequence (still denoted by) $\left( G_{k}-G_{i\kappa} \right) (\hat{\bx} \times \vecv_{n_k})$ converges in $\vecH^{-\frac{1}{2}}(\mdiv, S_R)$.

Since $(\vecw_{n_k},\,\vecv_{n_k})$ is solution to the
exterior transmission problem (\ref{ETEMaxwell1})-(\ref{ETEMaxwell6}), then $\{\vecw_{n_k}\}$ and
$\{\vecv_{n_k}\}$ satisfy
\begin{eqnarray*}
\mcurl(A\mcurl \vecw_{n_k}) + \kappa^2 N \vecw_{n_k} = (\kappa^2 +k^2) N \vecw_{n_k} \quad &\mbox{in}& \quad B_R \backslash \overline{D}, \\
\mcurl^2 \vecv_{n_k} + \kappa^2  \vecv_{n_k} = (\kappa^2 +k^2)  \vecv_{n_k} \quad &\mbox{in}& \quad B_R \backslash \overline{D},  \\
\nu \times \vecw_{n_k} - \nu \times \vecv_{n_k} = \vecg_{n_k} \quad &\mbox{on}& \quad \partial D, \\
\nu \times A \mcurl \vecw_{n_k}  - \nu \times \mcurl\vecv_{n_k} = \vech_{n_k} \quad &\mbox{on}& \quad \partial D, \\
\hat{\bx} \times  \mcurl \vecw_{n_k}   - G_{i\kappa} (\hat{\bx} \times \vecw_{n_k}) = \left( G_{k}-G_{i\kappa} \right) (\hat{\bx} \times \vecw_{n_k}) \quad &\mbox{on}& \quad S_R, \\
\hat{\bx} \times  \mcurl \vecv_{n_k}   - G_{i\kappa} (\hat{\bx} \times \vecv_{n_k}) =\left( G_{k}-G_{i\kappa} \right) (\hat{\bx} \times \vecw_{n_k}) \quad &\mbox{on}& \quad S_R,
\end{eqnarray*}
where $\kappa>0$ is chosen as in Lemma \ref{MaxwellETEInvertible}. Each term on the right hand sides converges, then from Remark \ref{InhomogeneousCaderon}, we have $\{(\vecw_{n_k}, \vecv_{n_k})\}$ converges in $\vecH(\mcurl, B_R\backslash\overline{D}) \times \vecH(\mcurl, B_R\backslash\overline{D})$. 
\end{proof}

Now we are ready to prove the following uniqueness result.
\begin{theorem} \label{MaxwellUniqueness}
Assume that $A$ and $N$ satisfy the assumptions in Lemma \ref{MaxwellETEInvertible} and $k$ is not a Maxwell eigenvalue for $C$. Then the cavity is uniquely determined by the measurements $\vecE^s(\bx,\by,\vecp)$ for all $\bx \in \Sigma$, $\by \in \Sigma$ and $\vecp \in \R^3$.
\end{theorem}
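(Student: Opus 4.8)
The plan is to argue by contradiction along the singular-source lines of \cite{CaCo1,CaCoMe,Hah}, with the exterior transmission problem of Section \ref{MaxwellETE} as the central device; working with this problem rather than with a Green's function for the background is exactly what lets the medium be general. Suppose two admissible configurations, a cavity $D$ with medium $(A,N)$ and a cavity $\widetilde{D}$ with medium $(\widetilde{A},\widetilde{N})$, both satisfying the hypotheses of Lemma \ref{MaxwellETEInvertible}, produce identical data on $\Sigma$, and assume for contradiction that $D\neq\widetilde{D}$. First I would turn equality of data into equality of fields. Fix $\by\in\Sigma$, $\vecp\in\R^3$ and set $\vecU:=\vecE^s(\cdot,\by,\vecp)-\widetilde{\vecE}^s(\cdot,\by,\vecp)$. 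Since $C$ lies in the homogeneous part of both configurations, $\vecU$ solves $\mcurl^2\vecU-k^2\vecU=0$ in $C$ and, as the data agree, $\nu\times\vecU=0$ on $\Sigma=\partial C$; because $k^2$ is not a Maxwell eigenvalue for $C$, this forces $\vecU\equiv0$ in $C$. Using the reciprocity relation (Lemma \ref{ReciprocityMaxwell}) to interchange source and observation, repeating the argument in the source variable, and invoking the analyticity of homogeneous Maxwell solutions (Lemma \ref{MaxwellInteriorRegularity}) together with unique continuation and the radiation condition, I would propagate this to the unbounded connected component $G$ of $\R^3\setminus(\overline{D}\cup\overline{\widetilde{D}})$, concluding that the two scattered fields coincide on $G$ for all source points in $G$.

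Next I would exploit the geometric discrepancy. Since $D\neq\widetilde{D}$, after possibly interchanging the roles of the two cavities there is a point $z^*\in\partial D\cap\partial G$ with $z^*\notin\overline{\widetilde{D}}$. Choose point-dipole sources at locations $z_n\to z^*$ approaching from within $G$. For the cavity $\widetilde{D}$ the points $z_n$ stay at a fixed positive distance from $\partial\widetilde{D}$, so $\widetilde{\vecE}^s(\cdot,z_n,\vecp)$ depends continuously on the source location and remains uniformly bounded in $\vecH(\mcurl)$ near $\partial D$. By the field identity on $G$ the scattered fields for $D$ inherit the same bound there, and the regularity estimate of Lemma \ref{MaxwellTransmissionRegularity} upgrades this to uniform $\vecH^1$-bounds on $B_R\setminus\overline{D_1}$ and on $D_1\setminus\overline{D}$. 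By contrast, the incident dipole $\vecE^i(\cdot,z_n,\vecp)=\vecG(\cdot,z_n,\vecp)$ has its singularity converging to $z^*\in\partial D$, so its $\vecH(\mcurl,B_R\setminus\overline{D})$-norm diverges and the transmission data it generates on $\partial D$, built from $\nu\times\vecG(\cdot,z_n,\vecp)$ and $\nu\times\mcurl\vecG(\cdot,z_n,\vecp)$, blow up in $\vecH^{-\frac{1}{2}}(\mdiv,\partial D)$.

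I would then package the scattering for $D$ as a solution $(\widetilde{\vecw}_n,\widetilde{\vecv}_n)$ of the exterior transmission problem \eqref{ETEMaxwell1}--\eqref{ETEMaxwell6}, with $\widetilde{\vecw}_n$ the exterior total field (solving the $A,N$ system) and $\widetilde{\vecv}_n$ the free field carried by the dipole, coupled on $\partial D$ through the data $(\widetilde{\vecg}_n,\widetilde{\vech}_n)$ read off from \eqref{TotalEqn3}--\eqref{TotalEqn4}. Because of the diverging dipole norm, the norms $s_n:=\|(\widetilde{\vecw}_n,\widetilde{\vecv}_n)\|_{\vecH(\mcurl)}$ must tend to infinity. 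Normalizing by $s_n$, the uniform $\vecH^1$-bounds of the previous paragraph together with the compact trace embedding make the normalized boundary data converge and supply the hypotheses of the compactness Lemma \ref{Maxwellintgone1}, so a subsequence of the normalized pairs converges in $\vecH(\mcurl,B_R\setminus\overline{D})\times\vecH(\mcurl,B_R\setminus\overline{D})$ to a nontrivial limit solving the homogeneous exterior transmission problem. However, the normalized dipole $\vecG(\cdot,z_n,\vecp)/s_n$ tends to zero pointwise away from $z^*$, which forces the free-field component of the limit, hence by the coupling and unique continuation the whole limit, to vanish identically — contradicting its nontriviality.

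The step I expect to be the main obstacle is precisely this last reconciliation: the several norms of the singular dipole $\vecG(\cdot,z_n,\vecp)$ (in $\vecL^2$, in $\vecH(\mcurl)$, and through $\mcurl\vecG$ in $\vecH^1$, cf. Remark \ref{MaxwellTransmissionRegularityRemark}) grow at different rates as $z_n\to z^*$, and one must choose the normalization and verify the bounds so that Lemma \ref{Maxwellintgone1} and the regularity Lemma \ref{MaxwellTransmissionRegularity} genuinely apply to the normalized sequence while the singularity at $z^*$ is not washed out in the limit — so that the blow-up forced by the dipole and the boundedness forced by the data identity on $G$ are irreconcilable. A secondary difficulty is the first step: continuing the field identity across the two differing boundaries into $G$, and placing the sources $z_n$ consistently with respect to both media, without assuming that $(A,N)$ and $(\widetilde{A},\widetilde{N})$ coincide.
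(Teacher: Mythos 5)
Your overall strategy --- singular sources combined with the exterior transmission problem and the compactness Lemma \ref{Maxwellintgone1} --- is indeed the paper's strategy, and your second and third paragraphs track Steps 2--4 of the actual proof quite closely (the paper resolves the normalization issue you flag by dividing the dipole by $\|\vecG(\cdot,\bx_n,\vecp)\|_{\vecH^1(\mcurl,\,D_1\setminus\overline{\Omega}_1)}$, so that the normalized incident field $\vecv_n$ has unit norm on the inhomogeneity adjacent to the touched boundary while tending to zero in $\vecH^2$ of any region bounded away from the singular point). However, there is a genuine error in your geometric setup, and it is precisely the point at which this interior problem is a ``transposition'' of the usual exterior one. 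The measurement sphere $\Sigma$ and the admissible dipole sources live \emph{inside} the cavity, and the scattered fields satisfy the free Maxwell system only there. Consequently the identity $\vecE_1^s=\vecE_2^s$, obtained from the non-eigenvalue assumption on $C$, reciprocity, and analyticity, propagates only to the connected component of $\Omega_1\cap\Omega_2$ (the intersection of the two cavities) containing $C$ --- not to the unbounded component $G$ of $\R^3\setminus(\overline{D}\cup\overline{\widetilde D})$. That exterior region is occupied by the two (possibly different) inhomogeneous media, where the fields solve different anisotropic systems and no analytic continuation from $C$ is available; moreover the incident dipoles are only defined for source points $\by$ in the cavity, so you cannot ``repeat the argument in the source variable'' for sources in $G$.

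Because of this, your choice of the singular point is reversed. You take $z^*\in\partial D$ with $z^*\notin\overline{\widetilde D}$ and approach it from the exterior; but the sources $\bx_n$ must lie in the region where the field identity has been established, namely $\Omega:=\Omega_1\cap\Omega_2$. The correct choice (made in the paper) is $\bx^*\in\partial\Omega_1$ with $\bx^*\in\Omega_2$, approached from inside the common cavity along the inward normal, $\bx_n=\bx^*-\frac{\epsilon}{n}\nu$. It is then the \emph{interiority} of $\bx^*$ to $\Omega_2$ that makes $\|\vecv_n\|_{\vecH^2(B_R\setminus\overline{\Omega}_2)}\to 0$ after normalization, hence forces the normalized scattered fields $\vecu_{2,n}$ --- and, by the field identity on $\Omega$, also $\vecu_{1,n}$ --- to vanish in the limit; meanwhile the singularity sitting on $\partial\Omega_1$ keeps $\|\vecv_n\|_{\vecH^1(\mcurl,\,D_1\setminus\overline{\Omega}_1)}=1$, and Lemma \ref{Maxwellintgone1} applied to the pair $(\vecu_{1,n}+\vecv_n,\vecv_n)$ yields a convergent subsequence whose limit must simultaneously vanish (by unique continuation from the region away from $\bx^*$) and have unit norm --- the contradiction. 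With the geometry corrected in this way, your argument becomes the paper's proof.
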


        \begin{figure}[hb!]
        \centering
\includegraphics[width=0.522\linewidth]{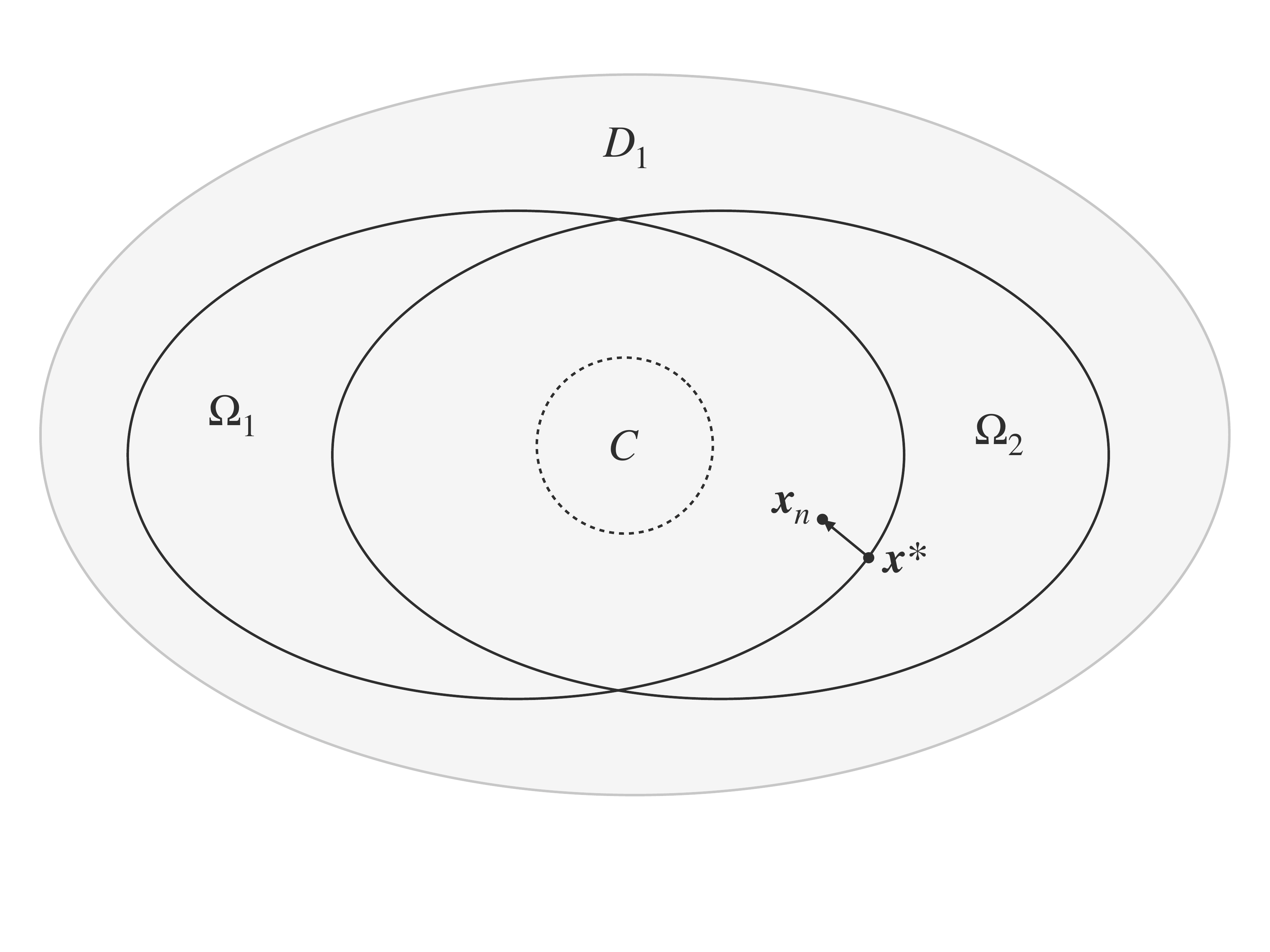}
 \vspace{-1\baselineskip}
     \caption{
     \linespread{1}
     An example of the geometry for uniqueness.
     } \label{TwoCavities}
    \end{figure}
\begin{proof}
To begin with,  let us assume that $D_1 \backslash \overline{\Omega}_1$ and $D_1 \backslash \overline{\Omega}_2$ are two inhomogeneities with material properties characterized by $A_j$ and $N_j$, respectively. $\vecE_j^s (j=1,2)$ are the corresponding scattered electric field. Moreover, we always have that the measurement ball $C$ belongs to  $\Omega:=\Omega_1 \cap \Omega_2$.
See Fig. \ref{TwoCavities} for a configuration of the problem. Now if $\vecE_1^s(\bx,\by,\vecp)=\vecE_2^s(\bx,\by,\vecp)$ for all $\bx, \by \in \Sigma:= \partial C$ and $\vecp \in \R^3$, we are going to show that $\Omega_1 = \Omega_2$.

We prove the theorem by a contradiction argument. Assume on the contrary that $\Omega_1 \not = \Omega_2$. 

\textit{Step 1:} We first show that $\vecE_1^s(\bx,\by,\vecp)=\vecE_2^s(\bx,\by,\vecp)$  for all $\bx, \by \in \Omega$ and $\vecp \in \R^3$. Note that for a fixed $\by \in \Sigma$,
\begin{eqnarray} 
\mcurl^2 \vecE_j^s(\cdot,\by,\vecp)- k^2 \vecE_j^s(\cdot,\by,\vecp) = 0 &\quad &\mbox{in} \quad C, \label{MaxwellUniqueness eqn1}\\
\nu \times  \vecE_1^s(\cdot,\by,\vecp) =  \nu \times \vecE_2^s(\cdot,\by,\vecp) &\quad &\mbox{on} \quad \Sigma. \label{MaxwellUniqueness eqn2}
\end{eqnarray}
Since $k$ is not a Maxwell eigenvalue for $C$, we have that
\begin{equation} \label{MaxwellUniqueness eqn3}
\vecE_1^s(\cdot,\by,\vecp)=\vecE_2^s(\cdot,\by,\vecp) \quad \mbox{in} \quad C. 
\end{equation}
From the interior regularity result in Lemma \ref{MaxwellInteriorRegularity} we have that
\begin{equation} \label{MaxwellUniqueness eqn4}
\vecE_1^s(\cdot,\by,\vecp)=\vecE_2^s(\cdot,\by,\vecp) \quad \mbox{in} \quad \Omega.
\end{equation}
From the reciprocity relation in Lemma \ref{ReciprocityMaxwell}, we have that for any $\by \in \Omega$ and $\bx\in \Sigma$
$$
\vecq \cdot \vecE_1^s(\bx,\by,\vecp) = \vecp \cdot \vecE_1^s(\by,\bx,\vecq) =  \vecp \cdot \vecE_2^s(\by,\bx,\vecq) =\vecq \cdot \vecE_2^s(\bx,\by,\vecp).
$$
This is valid for any $\vecq \in \R^3$. Therefore we have that
$$
\vecE_1^s(\bx,\by,\vecp)=\vecE_2^s(\bx,\by,\vecp) \quad \mbox{for} \quad \bx \in \Sigma \quad \mbox{and} \quad \by \in \Omega.
$$
Repeating the argument above {(i.e. from equations \eqref{MaxwellUniqueness eqn1}--\eqref{MaxwellUniqueness eqn2} where $\by \in \Omega$, we can derive that equations \eqref{MaxwellUniqueness eqn3}--\eqref{MaxwellUniqueness eqn4} hold for all $\by \in \Omega$)}, we can obtain that $\vecE_1^s(\bx,\by,\vecp)=\vecE_2^s(\bx,\by,\vecp)$  for all $\bx, \by \in \Omega$ and $\vecp \in \R^3$.

\textit{Step 2:} Now without loss of generality we can pick $\bx^* \in \partial \Omega_1$ such that $\bx^* \in \Omega_2$. Let $\bx_n:= \bx^*-\frac{\epsilon}{n}\nu \in \Omega$, where $\epsilon$ is small enough and $\nu$ is the unit outward normal to $\Omega_1$. It is seen that $\bx_n \to \bx^*$ as $n \to \infty$. Let $\vecE_{j,n}^s(\cdot,\bx_n,\vecp)$ be the scattered electric fields corresponding to the inhomogeneity $D_1 \backslash \overline{\Omega_j}$ and electric dipole $\vecE^i(\cdot, \bx_n, \vecp)$.  Define $\vecv_{n}$ and $\vecu_{j,n}$ by
$$
\vecv_{n}:= \frac{\vecG(\cdot,\bx_n,\vecp)}{\|\vecG(\cdot,\bx_n,\vecp)\|_{\vecH^1(\mcurl, D_1 \backslash \overline{\Omega}_1)}}, \quad \vecu_{j,n}:= \frac{\vecE_j^s(\cdot,\bx_n,\vecp)}{\|\vecG(\cdot,\bx_n,\vecp)\|_{\vecH^1(\mcurl,D_1 \backslash \overline{\Omega}_1)}},
$$
where $\|\vecG(\cdot,\bx_n,\vecp)\|^2_{\vecH^1(\mcurl, D_1 \backslash \overline{\Omega}_1)}= \|\vecG(\cdot,\bx_n,\vecp)\|^2_{\vecH^1( D_1 \backslash \overline{\Omega}_1)} + \|\mcurl \vecG(\cdot,\bx_n,\vecp)\|^2_{\vecH^1( D_1 \backslash \overline{\Omega}_1)}$.
Since $\bx^*$ is an interior point in $\Omega_2$, we have that
\begin{equation} \label{uniq_vto0}
\|\vecv_{n}\|_{\vecH^2(B_R \backslash \overline{\Omega}_2)} \to 0 ~~\mbox{as}~~ n \to \infty.
\end{equation}
Note that $\vecu_{2,n}$ is the unique solution to the forward problem (\ref{Scattered})-(\ref{SilverMuller}) depending continuously on data $\vecv_n$, then from Lemma \ref{ForwardMaxwell}, Lemma \ref{MaxwellTransmissionRegularity} and Remark \ref{MaxwellTransmissionRegularityRemark}
\begin{equation} \label{uniq_u2to0}
\|\vecu_{2,n}\|_{\vecH^1(\Omega)} \to 0 \quad \mbox{and} \quad \|\mcurl \vecu_{2,n}\|_{\vecH^1(\Omega)} \to 0 ~~\mbox{as}~~ n \to \infty.
\end{equation}
We have shown above that $\vecE_1^s(\bx,\by,\vecp)=\vecE_2^s(\bx,\by,\vecp)$  for all $\bx \in \Omega$, $\by \in \Omega$ and $\vecp \in \R^3$, then
\begin{equation} \label{uniq_u1to0}
\|\vecu_{1,n}\|_{\vecH^1(\Omega)} \to 0 \quad \mbox{and} \quad \|\mcurl \vecu_{1,n}\|_{\vecH^1(\Omega)} \to 0 ~~\mbox{as}~~ n \to \infty.
\end{equation}

Now, with the help of a cutoff function $\chi\in C_0^{\infty}(\R^3)$ with small enough compact support near $\bx^*$ and note that $
\mcurl (\chi \vecu_{1,n}  ) = \chi \mcurl \vecu_{1,n} + \nabla \chi \times \vecu_{1,n}
$, we can have from equation \eqref{uniq_u1to0} that
\begin{equation}\label{Maxwellgone1}
\|\nu \times \chi \vecu_{1,n}\|_{\vecH^{-\frac{1}{2}}(\mdiv,\partial \Omega_1)}\to 0  \quad \mbox{as} \quad n\to \infty.
\end{equation}
Similarly we can have from equation \eqref{uniq_u1to0} that
\begin{equation}\label{Maxwellgone2}
\|\nu \times \chi \mcurl \vecu_{1,n}\|_{\vecH^{-\frac{1}{2}}(\mdiv,\partial \Omega_1)} \to 0 \quad \mbox{as} \quad n\to \infty.
\end{equation}

\textit{Step 3:}
We next note that in the exterior of a small enough neighborhood $\Omega_{\epsilon_1}$ of $\bx^*$, $\vecv_n$ and $\mcurl \vecv_n$ are uniformly bounded in $\vecH^2(B_R \backslash \overline{\Omega}_{\epsilon_1})$. Then for a small enough neighborhood $\Omega_{\epsilon_2}$ of $\bx^*$, we have from Lemma \ref{MaxwellTransmissionRegularity} and Remark \ref{MaxwellTransmissionRegularityRemark} that $\vecu_{1,n} \in \vecH_{loc}^1(\Omega_1 \backslash \overline{\Omega}_{\epsilon_2})$, $A\mcurl \vecu_{1,n} \in \vecH_{loc}^1(\Omega_1 \backslash \overline{\Omega}_{\epsilon_2})$ and $\mcurl\big(A\mcurl \vecu_{1,n}\big) \in \vecH_{loc}^1(\Omega_1 \backslash \overline{\Omega}_{\epsilon_2})$. Since $A$ has $C^1(\overline{D}_1 \backslash D)$ entries, then $\mcurl \vecu_{1,n} \in \vecH_{loc}^1(\Omega_1 \backslash \overline{\Omega}_{\epsilon_2})$ and $\mcurl^2 \vecu_{1,n} \in \vecH_{loc}^1(\Omega_1 \backslash \overline{\Omega}_{\epsilon_2})$. Recall that $\chi$ has compact support near $\bx^*$, we have $(1-\chi)\vecu_{1,n} \in \vecH^1(\Omega_1)$ and $(1-\chi)\mcurl  \vecu_{1,n}  \in \vecH^1(\Omega_1)$.

Since
$
\mcurl \left((1-\chi)\vecu_{1,n} \right) = (1-\chi) \mcurl \vecu_{1,n} + \nabla (1-\chi) \times \vecu_{1,n} \in \vecH^1(\Omega_1)
$, then from the compact embedding of $H^1(\Omega_1)$ to $L^2(\Omega_1)$, there exists a $\vecH(\mcurl, \Omega_1)$ convergent subsequence $\{(1-\chi)\vecu_{1,n_k}\}$, still denoted (abusively) by $\{(1-\chi)\vecu_{1,n}\}$ . Similarly there exists a $\vecH(\mcurl, \Omega_1)$ convergent subsequence $\{(1-\chi)\mcurl \vecu_{1,n_k}\}$, still denoted (abusively) by $\{(1-\chi)\mcurl \vecu_{1,n}\}$. Therefore we immediately have that
\begin{equation}\label{Maxwellgone11}
\|\nu \times (1-\chi) \vecu_{1,n}\|_{\vecH^{-\frac{1}{2}}(\mdiv,\partial \Omega_1)}   \quad \mbox{converges as} \quad n\to \infty,
\end{equation}
\begin{equation}\label{Maxwellgone22}
\|\nu \times (1-\chi) \mcurl  \vecu_{1,n}\|_{\vecH^{-\frac{1}{2}}(\mdiv,\partial \Omega_1)} \quad \mbox{converges as} \quad n\to \infty.
\end{equation}

Finally from equations (\ref{Maxwellgone1})-(\ref{Maxwellgone22}) we have that
\begin{eqnarray}
\|\nu \times  \vecu_{1,n}\|_{\vecH^{-\frac{1}{2}}(\mdiv,\partial \Omega_1)}   \quad &\mbox{converges as}& \quad n\to \infty, \label{uniq_step3_1}\\
\|\nu \times \mcurl  \vecu_{1,n}\|_{\vecH^{-\frac{1}{2}}(\mdiv,\partial \Omega_1)} \quad &\mbox{converges as}& \quad n\to \infty. \label{uniq_step3_2}
\end{eqnarray}

\textit{Step 4:} It is first seen that the total wave field and incident wave field pair $(\vecu_{1,n}+\vecv_n, \vecv_n)$ satisfies equations  (\ref{ETEMaxwell1})-(\ref{ETEMaxwell6}) with $D=\Omega_1$, $\vecf_1=0$, $\vecf_2=0$, $\vecg = \nu \times \vecu_{1,n}$, and $\vech=\nu \times \mcurl \vecu_{1,n}$. Note that $\|\vecv_n\|_{\vecH^1(\mcurl, D_1 \backslash \overline{\Omega}_1)}=1$ and $\bx_n \in \Omega_1$, then $\vecv_n$ and $\mcurl \vecv_n$ are uniformly bounded in $\vecH^1(B_R \backslash \overline{\Omega_1})$. Moreover $\vecu_{1,n}$ is the unique solution to the forward problem (\ref{Scattered})-(\ref{SilverMuller}) depending continuously on data $\vecv_n$, then from Lemma \ref{MaxwellTransmissionRegularity} and Remark \ref{MaxwellTransmissionRegularityRemark}, $\vecu_{1,n}$ and $\mcurl \vecu_{1,n}$ are uniformly bounded in $\vecH^1(B_R \backslash \overline{D}_1)$ and $\vecH^1(D_1 \backslash \overline{\Omega}_1)$.

Together with \eqref{uniq_step3_1} -- \eqref{uniq_step3_2}, we have that all the assumptions in Lemma \ref{Maxwellintgone1} are satisfied, hence there exists a subsequence, still denoted (abusively) by $\vecv_n$, that converges in $\vecH(\mcurl, B_R \backslash \overline{\Omega}_1)$. Assume $\vecv_n \to \vecv^*$. Then
$$
\mcurl^2 \vecv^* -k^2 \vecv^*=0 \quad \mbox{in} \quad B_R \backslash \overline{\Omega}_1.
$$
Moreover note that $\vecv_n \to 0$ in any open set away from $\bx^*$, then we have $\vecv^*=0$ in some open set in $B_R \backslash \overline{\Omega}_1$. By interior regularity we have that $\vecv^*=0$ in $B_R \backslash \overline{\Omega_1}$. This contradicts $\vecv_n \to \vecv^*$ since $\|\vecv_n\|_{\vecH^1(\mcurl, D_1 \backslash \overline{\Omega}_1)}=1$. This completes the proof. 
\end{proof}
\section{The linear sampling method} \label{LSM}
The second goal of this paper is to design a robust imaging algorithm. Here we are particularly interested in the so-called qualitative methods. For a more detailed introduction to qualitative methods we refer to the book \cite{CaCo, CK}. As a first step towards qualitative methods for our inverse interior electromagnetic scattering problems for a penetrable cavity, we investigate the linear sampling method. It is our further interest to consider { the factorization method and related imaging methods}.

In this section we develop the linear sampling method. Given the near field data $\vecE^s(\bx,\by,\vecp)$ for all $\bx, \by \in \Sigma$ and $\vecp \in \R^3$, we can define $\mathcal{N}: \vecL_t^2(\Sigma) \to \vecL_t^2(\Sigma)$ by
\begin{equation} \label{near field operator}
(\mathcal{N} \vecg)(\bx) := \int_\Sigma \nu(\bx) \times \vecE^s(\bx,\by,\vecg(\by)) ds(\by).
\end{equation}

We also define the electric single layer potential $\vecS : \vecH^{-\frac{1}{2}}(\mdiv,\Sigma) \to \vecH_{loc}^1(\mcurl, \mathbb{R}^3 \backslash \Sigma)$ by
$$
(\vecS \vecg)(\bx):=\int_{\Sigma} \vecG (\bx, \by) \vecg(\by) ds(\by).
$$
Denoted by $\vecH$ the function space of
$$
\vecH := \mbox{closure}\bigg\{ \vecu \in \vecH^1(\mcurl, D_1 \backslash \overline{D}): \nonumber\\
\vecu = ( \vecS \vecg )|_{D_1 \backslash \overline{D}}\, , \vecg \in \vecH^{-\frac{1}{2}}(\mdiv,\partial D) \bigg\}, \label{lsm function space H}
$$
and then define the solution operator $\vecK: \vecH \to \vecH_{loc}(\mcurl, \R^3)$ by
$$
\vecK \vecf:= \vecE^s
$$
with $\vecE^s$ and $\vecf$ (here $\vecf$ is a general function in $\vecH^1(\mcurl, D_1 \backslash \overline{D})$) satisfying equations (\ref{Scattered})-(\ref{SilverMuller}). A superposition argument yields
$$
\mathcal{\vecN} \vecg = \big(\nu \times \vecK \vecS \vecg \big)|_\Sigma.
$$
It can also be written as $\mathcal{\vecN} = \vecB \vecS$ by introducing the operator $\vecB: \vecH \to \vecL_t^2(\Sigma)$ defined by $\vecB \vecf := (\nu \times \vecK \vecf) |_{\Sigma}$.

In the remaining of the paper, we make the following assumptions.
\begin{assum} \label{ETEMaxwellDiscreteness}
Assume that $k$ is not an exterior transmission eigenvalue, and the corresponding exterior transmission problem (\ref{ETEMaxwell1})-(\ref{ETEMaxwell6}) has a unique solution depending continuously on the data.
\end{assum}

Similar to the interior transmission problem \cite{CaCo,CaHa2}, we expect that the exterior transmission eigenvalues form a discrete set and there exists infinitely many such eigenvalues. We expect that the exterior transmission eigenvalues provide information about the surrounding medium and such eigenvalues can be used in non-destructive material testing.  It is our future interest to investigate the exterior transmission problem. 
\begin{assum} \label{MaxwellEigenAssum}
Assume that $k$ is not a Maxwell eigenvalue for $C$.
\end{assum}

\subsection{A denseness result}
To develop the linear sampling method, we need the following denseness result.
\begin{lemma} \label{MaxwellDenseness}
Assume that $\vecu \in \vecH_{loc}(\mcurl, \R^3 \backslash \overline{D})$ satisfies
\begin{eqnarray*}
\mcurl^2 \vecu- k^2 \vecu = 0 &\quad &\mbox{in} \quad \R^3 \backslash \overline{D}, \\
\lim\limits_{|\bx|\to \infty}\left(\mcurl \vecu\times \bx-ik|\bx|\vecu\right)=0.&&
\end{eqnarray*}
Then the restrictions of $\vecu$ on $D_1 \backslash \overline{D}$ belongs to $\vecH$, i.e. $\vecu|_{D_1 \backslash \overline{D}} \in \vecH$.
\end{lemma}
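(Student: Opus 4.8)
The plan is to reduce the statement to a density property of boundary traces and then establish that property by duality. First I would observe that both $\vecu$ and every single-layer potential $\vecS\vecg$ are radiating solutions of $\mcurl^2(\cdot)-k^2(\cdot)=0$ in $\R^3\setminus\overline{D}$, and that such a solution is uniquely determined by its tangential trace on $\partial D$ (a radiating field with $\nu\times(\cdot)=0$ on $\partial D$ vanishes, by uniqueness of the exterior perfect-conductor problem for real $k>0$). This furnishes a bounded solution operator $T:\vecH^{-\frac12}(\mdiv,\partial D)\to\vecH^1(\mcurl, D_1\setminus\overline{D})$ that maps a tangential field to the restriction to $D_1\setminus\overline{D}$ of the radiating field it generates, so that $(\vecS\vecg)|_{D_1\setminus\overline{D}}=T(\nu\times\vecS\vecg)$ and $\vecu|_{D_1\setminus\overline{D}}=T(\nu\times\vecu)$. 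Because $T$ is continuous, it then suffices to show that $\{\nu\times(\vecS\vecg)|_{\partial D}:\vecg\in\vecH^{-\frac12}(\mdiv,\partial D)\}$ is dense in $\vecH^{-\frac12}(\mdiv,\partial D)$; indeed, picking $\vecg_n$ with $\nu\times\vecS\vecg_n\to\nu\times\vecu$ would give $(\vecS\vecg_n)|_{D_1\setminus\overline{D}}\to\vecu|_{D_1\setminus\overline{D}}$ and hence $\vecu|_{D_1\setminus\overline{D}}\in\vecH$.

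To obtain this trace density I would argue by duality. Writing $S_t\vecg:=\nu\times(\vecS\vecg)|_{\partial D}$ for the electric single-layer boundary operator, the range of $S_t$ is dense if and only if its adjoint, taken with respect to the duality pairing $\langle\cdot,\cdot\rangle_{\partial D}$ between $\vecH^{-\frac12}(\mdiv,\partial D)$ and $\vecH^{-\frac12}(\mcurl,\partial D)$, is injective; by the symmetry $\vecG(\bx,\by)=\vecG(\by,\bx)^{\top}$ of the Green tensor this adjoint is again a single-layer boundary operator, so it suffices to show that a single-layer potential with vanishing tangential trace has vanishing density. Assuming $\nu\times\vecS\vecm=0$ on $\partial D$, the potential $\vecS\vecm$ is a radiating solution in $\R^3\setminus\overline{D}$ with vanishing tangential trace and therefore vanishes there. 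Since the tangential trace of a single-layer potential is continuous across $\partial D$, the interior field $\vecS\vecm|_{D}$ solves the homogeneous Maxwell equation in $D$ with zero tangential trace on $\partial D$, so $\vecS\vecm\equiv0$ in $D$; the jump relation $[\nu\times\mcurl\vecS\vecm]_{\partial D}=\vecm$ would then give $\vecm=0$, proving injectivity and hence the lemma.

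The hard part will be the interior step $\vecS\vecm|_{D}\equiv0$. A field solving $\mcurl^2(\cdot)-k^2(\cdot)=0$ in $D$ with vanishing tangential trace need not be zero precisely when $k^2$ is an interior Maxwell eigenvalue of $D$, and this obstruction cannot be removed by unique continuation, which only propagates a zero that is already present on an open set. As $D$ is the very object to be reconstructed, I would circumvent this either by imposing a non-resonance condition on $k$ (the exceptional values forming a discrete set) or, more robustly, by replacing the pure single layer in the definition of $\vecH$ with a combined single-and-double layer potential, whose associated interior operator is injective for every real $k>0$; the two Cauchy data are then controlled simultaneously and the duality argument goes through unchanged. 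A secondary technical point, handled using the $C^2$-smoothness of $\partial D$, is to confirm the mapping properties invoked above: that $\vecS$ maps $\vecH^{-\frac12}(\mdiv,\partial D)$ into $\vecH^1(\mcurl, D_1\setminus\overline{D})$ up to $\partial D$ and that $T$ is bounded into the same space.
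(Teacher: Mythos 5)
Your first reduction is exactly the one the paper makes: a radiating solution of $\mcurl^2\vecu-k^2\vecu=0$ in $\R^3\backslash\overline{D}$ depends continuously on its tangential trace on $\partial D$, so it suffices to approximate $\nu\times\vecu$ on $\partial D$ by traces of the potentials generating $\vecH$ (the paper then simply cites Theorem 3.4 of Zeng--Cakoni--Sun for that trace density rather than reproving it). The genuine problem is in your second step: you have put the single-layer density on the wrong surface. In this paper $\vecS$ is the potential $\int_{\Sigma}\vecG(\cdot,\by)\vecg(\by)\,ds(\by)$ with density on the measurement sphere $\Sigma=\partial C\subset D$ (this is forced by the factorization $\mathcal{\vecN}=\vecB\vecS$ of the near-field operator, and it is what the paper's proof uses via $\vecp_n\in\vecH^{-1/2}(\mdiv,\Sigma)$; the ``$\vecg\in\vecH^{-1/2}(\mdiv,\partial D)$'' in the displayed definition of $\vecH$ is a typo). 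Your argument -- continuity of the tangential trace \emph{across $\partial D$}, the interior field $\vecS\vecm|_{D}$ solving the homogeneous equation in $D$, the jump relation on $\partial D$ -- only makes sense for a layer supported on $\partial D$, so what you prove is denseness of a different space, not of $\vecH$.

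This misplacement also manufactures the obstruction that your third paragraph then struggles with. Run the duality argument with the correct geometry: the adjoint of $\vecg\mapsto\nu\times(\vecS\vecg)|_{\partial D}$, $\vecg$ on $\Sigma$, is (essentially) a potential with density on $\partial D$ whose tangential trace is evaluated on $\Sigma$; if that trace vanishes, the potential solves the homogeneous Maxwell system in all of $C$ with zero tangential trace on $\partial C$, and it vanishes provided $k^2$ is not a Maxwell eigenvalue of the \emph{measurement ball} $C$ -- precisely the standing assumption of the paper, harmless because $C$ is chosen by the experimenter. Unique continuation then kills the potential in $D$ and outside $D$, and the jump relation on $\partial D$ gives injectivity. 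So the exceptional set you worry about concerns $C$, not the unknown cavity $D$, and neither of your proposed fixes is acceptable: assuming non-resonance of $D$ is inadmissible since $D$ is the object to be reconstructed, and replacing $\vecS$ by a combined single-and-double layer changes the definition of $\vecH$ and destroys the factorization $\mathcal{\vecN}=\vecB\vecS$ on which Theorem \ref{MaxwellLSM} rests.
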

\begin{proof}
For the given $\vecu \in \vecH_{loc}(\mcurl, \R^3 \backslash \overline{D})$, we have that there exists $\vecp_n \in \vecH^{-\frac{1}{2}}(\mdiv, \Sigma)$ such that (c.f. Theorem 3.4 in \cite{ZCS})
\begin{equation} \label{proof MaxwellDenseness 1}
\|\nu \times \vecu_n\|_{\vecH^{-\frac{1}{2}}(\mdiv,\partial D)} \to \|\nu \times \vecu\|_{\vecH^{-\frac{1}{2}}(\mdiv,\partial D)},
\end{equation}
where $\vecu_n \in \vecH$ is defined by
$$
\vecu_n = \int_{\Sigma} \vecG (\cdot, \by) \vecp_n(\by) ds(\by).
$$
Note that $\by \in C \subset D$, then we have that $\vecu_n$ satisfies the same equation as $\vecu$, i.e.
\begin{eqnarray*}
\mcurl^2 \vecu_n- k^2 \vecu_n = 0 &\quad &\mbox{in} \quad \R^3 \backslash \overline{D}, \\
\lim\limits_{|\bx|\to \infty}\left(\mcurl \vecu_n\times \bx-ik|\bx|\vecu_n\right)=0.&&
\end{eqnarray*}
Equation \eqref{proof MaxwellDenseness 1} states that the tangential trace of $\vecu_n$ converges to the tangential trace of $\vecu$ on $\partial D$, therefore the well-posedness of the Maxwell equation directly yields
$$
\|\vecu_n\|_{\vecH(\mcurl, D_1 \backslash \overline{D})} \to \|\vecu\|_{\vecH(\mcurl, D_1 \backslash \overline{D})}.
$$
This completes the proof. 
\end{proof}

\subsection{Characterization of the cavity by $\vecB$}
Now we give a characterization of the boundary $\partial D$.
\begin{theorem} \label{RangeBMaxwell}
Assume that Assumption \ref{ETEMaxwellDiscreteness} and \ref{MaxwellEigenAssum} hold. Then for any $\bz \in \R^3 \backslash \overline{C}$ and polarization $\vech \in \R^3$,
$$
\bz \in \R^3 \backslash \overline{D} \quad \mbox{iff} \quad \big(\nu \times  \vecG (\cdot,\bz) \vech\big)|_\Sigma \in \mbox{Range}(\vecB).
$$
\end{theorem}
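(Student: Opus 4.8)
The plan is to prove the two implications separately, writing $\vecG_\bz := \vecG(\cdot,\bz)\vech$ for brevity. The biconditional reflects the classical linear sampling dichotomy: for sampling points $\bz$ outside the cavity the dipole trace can be matched exactly by a genuine scattered field, whereas for $\bz$ inside $D$ (but outside the measurement ball $C$) matching it would force the scattered field to inherit the dipole singularity, which is impossible.

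For the forward direction, assume $\bz\in\R^3\setminus\overline D$. Then $\vecG_\bz$ is a smooth radiating solution of $\mcurl^2\vecG_\bz-k^2\vecG_\bz=0$ in a neighbourhood of $\overline D$, so $\nu\times\vecG_\bz,\ \nu\times\mcurl\vecG_\bz\in\vecH^{-\frac{1}{2}}(\mdiv,\partial D)$. I would solve the exterior transmission problem \eqref{ETEMaxwell1}--\eqref{ETEMaxwell6} with $\widetilde\vecf_1=\widetilde\vecf_2=0$, $\widetilde\vecg=\nu\times\vecG_\bz$ and $\widetilde\vech=\nu\times\mcurl\vecG_\bz$; Assumption \ref{ETEMaxwellDiscreteness} yields a unique solution $(\widetilde\vecw,\widetilde\vecv)$. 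Since $\widetilde\vecv$ is a radiating solution of $\mcurl^2\widetilde\vecv-k^2\widetilde\vecv=0$ in $\R^3\setminus\overline D$, Lemma \ref{MaxwellDenseness} gives $\vecf:=\widetilde\vecv|_{D_1\setminus\overline D}\in\vecH$. I then claim that setting $\vecE^s:=\vecG_\bz$ in $D$ and $\vecE^s:=\widetilde\vecw-\widetilde\vecv$ in $\R^3\setminus\overline D$, with exterior total field $\vecE:=\widetilde\vecw$ and incident field $\widetilde\vecv$, solves the forward problem \eqref{TotalEqn1}--\eqref{TotalEqn5} with this $\vecf$: equations \eqref{TotalEqn3}--\eqref{TotalEqn4} reduce verbatim to the transmission conditions \eqref{ETEMaxwell3}--\eqref{ETEMaxwell4}, the interior/exterior equations and radiation condition are immediate, and $\nu\times\vecE^s$ is continuous across $\partial D$. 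Because the source in \eqref{Scattered} depends only on $\vecf|_{D_1\setminus\overline D}$, uniqueness (Lemma \ref{ForwardMaxwell}) identifies this field with $\vecK\vecf$. Hence $\vecE^s=\vecG_\bz$ in $D\supset\Sigma$, so $\vecB\vecf=(\nu\times\vecG_\bz)|_\Sigma$ and the trace lies in the range.

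For the reverse direction, suppose $\bz\in\overline D$ (so $\bz\in(D\setminus\overline C)\cup\partial D$ since $\bz\in\R^3\setminus\overline C$) and, arguing by contradiction, that $(\nu\times\vecG_\bz)|_\Sigma\in\mbox{Range}(\vecB)$. Pick $\vecf\in\vecH$ with $\vecE^s:=\vecK\vecf$ satisfying $\nu\times\vecE^s=\nu\times\vecG_\bz$ on $\Sigma$. As $\bz\notin\overline C$, both fields solve $\mcurl^2(\cdot)-k^2(\cdot)=0$ in $C$, so their difference has zero tangential trace on $\partial C$; since $k$ is not a Maxwell eigenvalue for $C$ (Assumption \ref{MaxwellEigenAssum}) this forces $\vecE^s=\vecG_\bz$ in $C$. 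Now $\vecE^s$ solves the homogeneous Maxwell equation throughout $D$, hence is analytic there by Lemma \ref{MaxwellInteriorRegularity}, and unique continuation propagates $\vecE^s=\vecG_\bz$ to $D\setminus\{\bz\}$. This is impossible: $\vecE^s\in\vecH_{loc}(\mcurl,\R^3)$ gives $\vecE^s\in\vecL^2(D)$, whereas the dipole $\vecG_\bz$ fails to be square integrable near its singularity $\bz\in\overline D$. The contradiction shows the trace is not in the range, completing the characterization.

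The analytically demanding part of this argument is the forward direction, and it has essentially been discharged in advance: the solvability of the exterior transmission problem (Assumption \ref{ETEMaxwellDiscreteness}, whose well-posedness for $k=i\kappa$ with $\kappa$ large was established in Theorem \ref{MaxwellETEFredholm} through the T-coercivity estimate of Lemma \ref{MaxwellETEInvertible}) and the denseness Lemma \ref{MaxwellDenseness}. Granting these, the only real care needed is the verification that the constructed pair solves \eqref{TotalEqn1}--\eqref{TotalEqn5}, together with the trace-regularity bookkeeping ensuring $\nu\times\vecG_\bz$ is admissible transmission data; the reverse direction is then an elementary unique-continuation and singularity argument. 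I expect the subtlest point to be justifying that $\vecf=\widetilde\vecv|_{D_1\setminus\overline D}$ genuinely produces $\vecG_\bz$ as its scattered field inside the cavity, i.e. correctly matching the exterior-transmission data with the forward-problem transmission conditions and invoking uniqueness to pass from the constructed fields to $\vecK\vecf$.
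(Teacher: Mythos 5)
Your proposal is correct and follows essentially the same route as the paper: for $\bz$ outside $\overline{D}$ you solve the exterior transmission problem with data $\nu\times\vecG(\cdot,\bz)\vech$ and $\nu\times\mcurl\vecG(\cdot,\bz)\vech$, glue $\vecG(\cdot,\bz)\vech$ in $D$ to $\widetilde\vecw-\widetilde\vecv$ outside, and use Lemma \ref{MaxwellDenseness} to place $\widetilde\vecv|_{D_1\backslash\overline{D}}$ in $\vecH$; for $\bz$ inside you use the Maxwell eigenvalue assumption on $C$, interior regularity, and the non-square-integrable singularity of the dipole to reach a contradiction, exactly as in the paper (which phrases the contradiction via boundedness rather than $\vecL^2$ integrability). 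The only slight difference is that you explicitly cover $\bz\in\partial D$, a case the paper's proof passes over silently.
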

\begin{proof}
We first show that if $\bz \in D\backslash \overline{C}$, then $(\nu \times  \vecG (\cdot,\bz) \vech)|_\Sigma \not\in \mbox{Range}(\vecB)$. Assume on the contrary that there exists $\vecu \in \vecH$ such that
$$
\vecB \vecu = \big(\nu \times  \vecG (\cdot,\bz) \vech\big)|_\Sigma.
$$
Let $\vecw = \vecK \vecu$, then
$$
\mcurl^2 \vecw - k^2 \vecw = 0 \quad \mbox{in} \quad C
$$
and
$\nu \times \vecw = \vecB \vecu= \big(\nu \times  \vecG (\cdot,\bz) \vech\big)|_\Sigma$ on  $\Sigma$.
Note that $\bz \not\in \overline{C}$, then
\begin{eqnarray*}
\mcurl^2 \big( \vecG (\cdot,\bz) \vech \big) - k^2 \vecG (\cdot,\bz) \vech = 0 &\quad &\mbox{in} \quad C.
\end{eqnarray*}
Now $\vecw$ and $\vecG (\cdot,\bz) \vech$ satisfy the same equation with the same boundary data, then the fact that $k$ is not a Maxwell eigenvalue for $C$ yields
$$
\vecw = \vecG (\cdot,\bz) \vech \quad \mbox{in} \quad C.
$$
From the interior regularity in Lemma \ref{MaxwellInteriorRegularity} we have that
$$
\vecw = \vecG (\cdot,\bz) \vech \quad \mbox{in} \quad D \backslash \{\bz\}.
$$
This contradicts  the fact that $\vecw$ is analytic and bounded in the interior of $D$.

We now show that if $\bz \in \R^3 \backslash \overline{D}$, then $\big(\nu \times  \vecG (\cdot,\bz) \vech\big)|_\Sigma \in \mbox{Range}(\vecB)$. From the well-posedness of the exterior transmission problem in Assumption \ref{ETEMaxwellDiscreteness}, we have that there exists a unique solution $(\vecw, \vecv)$ such that equations (\ref{ETEMaxwell1})-(\ref{ETEMaxwell6}) hold for $\vecf_{1,2}=0$, $\vecg = \nu \times \big( \vecG (\cdot,\bz) \vech\big)$ and $\vech = \nu \times \big(\mcurl \vecG (\cdot,\bz) \vech\big)$. Let us define
$$
\vecu:=\Big\{
\begin{array}{ccc}
 \vecw -\vecv &  \mbox{in}  & \R^3 \backslash \overline{D}  \\
 \vecG (\cdot,\bz) \vech &  \mbox{in} &  D
\end{array}.
$$
Then we have that  $\vecu \in \vecH_{loc}(\mcurl, \R^3)$ and
$$
\mcurl(A\mcurl \vecu) - k^2 N \vecu = \mcurl \left( \left(I-A\right) \mcurl \vecv\right) - k^2\left(I- N\right) \vecv  \quad \mbox{in} \quad \R^3.
$$
This yields
$$
\vecB \vecv = \nu \times \vecK \vecv = \nu \times \vecu = \nu \times  \vecG (\cdot,\bz) \vech \quad \mbox{on} \quad \Sigma,
$$
provided that $\vecv \in \vecH$. Indeed that $(\vecw, \vecv)$ satisfies equations (\ref{ETEMaxwell1})-(\ref{ETEMaxwell6}), then
\begin{eqnarray*}
\mcurl^2 \vecv- k^2 \vecv = 0 &\quad &\mbox{in} \quad \R^3 \backslash \overline{D} \\
\lim\limits_{|\bx|\to \infty}\left(\mcurl \vecv \times \bx-ik|\bx|\vecv\right)=0,&&
\end{eqnarray*}
from this and Lemma \ref{MaxwellDenseness}, we have that $\vecv \in \vecH$. Hence $\big(\nu \times  \vecG (\cdot,\bz) \vech\big)|_\Sigma \in \mbox{Range}(\vecB)$ and this completes the proof. 
\end{proof}

\subsection{Properties of $\vecB$ and $\vecS$}
From the definition of the function space $\vecH$ and the operator $\vecS$, it directly follows that $\vecS$ has dense range in $\vecH$. Now we prove the following lemma.
\begin{lemma} \label{DenseRangeBMaxwell}
Assume that $k$ is not a exterior transmission eigenvalue, then $\vecB$ is compact and has dense range in $\vecL_t^2(\Sigma)$.
\end{lemma}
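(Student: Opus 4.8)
The plan is to prove the two assertions separately: compactness follows from an interior-regularity smoothing argument, while density of the range follows from showing that the adjoint $\vecB^*$ is injective.

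\textbf{Compactness.} First I would use the fact that $\Sigma=\partial C$ lies strictly inside $D$, where $A=\vecI$ and $N=\vecI$, so the right-hand side of \eqref{Scattered} vanishes and $\vecE^s=\vecK\vecf$ satisfies $\mcurl^2\vecE^s-k^2\vecE^s=0$ in $D$. Fix an intermediate domain $C\subset\subset C'\subset\subset D$. By Lemma \ref{ForwardMaxwell} the map $\vecf\mapsto\vecE^s$ is bounded from $\vecH$ into $\vecH(\mcurl,B_R)$, and by the interior regularity of Lemma \ref{MaxwellInteriorRegularity} the fields $\vecE^s$ are bounded in $\vecH^s(C')$ for every $s>0$, with norm controlled by $\|\vecf\|_{\vecH}$. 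Composing with the tangential trace onto $\Sigma$ yields a bounded map from $\vecH$ into $\vecH^{s-1/2}(\Sigma)$; since $\vecH^{s-1/2}(\Sigma)$ embeds compactly into $\vecL_t^2(\Sigma)$ for $s>1/2$, the operator $\vecB$ is compact.

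\textbf{Dense range.} To prove density I would show $\vecB^*$ is injective. Let $\vecg\in\vecL_t^2(\Sigma)$ satisfy $\int_\Sigma(\nu\times\vecK\vecf)\cdot\overline{\vecg}\,ds=0$ for every $\vecf\in\vecH$; the goal is $\vecg=0$. The idea is to reinterpret this pairing by introducing the radiating field generated by $\vecg$, namely a single-layer type potential $\vecv_g=\vecS\overline{\vecg}$ continued as a radiating solution of the homogeneous Maxwell system off $\Sigma$, and to use a Green's identity together with the reciprocity relation of Lemma \ref{ReciprocityMaxwell} to transfer the test against $\vecg$ onto $\partial D$. This rewrites the annihilation condition as the vanishing of a duality pairing on $\partial D$ involving the tangential traces of $\vecf$ and the traces of a companion scattered field $\vecw_g$ generated by the incident field $\vecv_g$. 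I would then exploit Lemma \ref{MaxwellDenseness}: since the restrictions to $D_1\setminus\overline{D}$ of radiating solutions of $\mcurl^2\vecu-k^2\vecu=0$ in $\R^3\setminus\overline{D}$ are dense in $\vecH$, the vanishing of the pairing for all such $\vecf$ forces the transmission conditions \eqref{ETEMaxwell3}--\eqref{ETEMaxwell4} to become homogeneous, so that $(\vecw_g,\vecv_g)$ solves the homogeneous exterior transmission problem \eqref{ETEMaxwell1}--\eqref{ETEMaxwell6}. Because $k$ is assumed not to be an exterior transmission eigenvalue (Assumption \ref{ETEMaxwellDiscreteness}), the only solution is trivial, whence $\vecv_g\equiv0$ in $\R^3\setminus\overline{D}$; the jump relations of the layer potential across $\Sigma$, together with uniqueness for the interior and exterior Maxwell problems, then give $\vecg=0$. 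Thus $\vecB^*$ is injective and $\vecB$ has dense range in $\vecL_t^2(\Sigma)$.

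\textbf{Main obstacle.} I expect the crux to be the explicit computation of $\vecB^*$, that is, carrying out the Green's/reciprocity identity carefully enough that the annihilation condition is identified precisely with a homogeneous exterior transmission problem. The delicate points are keeping track of the correct dual pairings between $\vecH^{-\frac{1}{2}}(\mdiv,\partial D)$ and $\vecH^{-\frac{1}{2}}(\mcurl,\partial D)$, handling the complex conjugation (the problem is not self-adjoint), and verifying that the boundary data produced lie in the trace spaces required by Theorem \ref{MaxwellETEFredholm}, so that the eigenvalue hypothesis can legitimately be invoked.
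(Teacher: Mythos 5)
Your compactness argument is correct and is essentially the paper's: since $\Sigma=\partial C$ lies strictly inside $D$, where $\vecK\vecf$ solves the homogeneous Maxwell system, interior regularity (Lemma \ref{MaxwellInteriorRegularity}) smooths the field near $\Sigma$ and the tangential trace lands compactly in $\vecL_t^2(\Sigma)$.

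The density half, however, is incomplete at exactly the point you flag as the ``main obstacle'', and the paper closes that point by a different and much shorter route than the one you sketch. You propose to compute $\vecB^*$ by transferring the annihilation condition to a duality pairing on $\partial D$ and then to use Lemma \ref{MaxwellDenseness} to force the transmission data in \eqref{ETEMaxwell3}--\eqref{ETEMaxwell4} to vanish. It is not clear this can work as stated: the annihilation condition carries information only on $\Sigma$, and Lemma \ref{MaxwellDenseness} is a denseness statement about restrictions to $D_1\setminus\overline{D}$, not a device for killing Cauchy data on $\partial D$. The ingredient your sketch never invokes at this stage is the standing assumption that $k^2$ is not a Maxwell eigenvalue for $C$ (Assumption \ref{MaxwellEigenAssum}). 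The paper's argument runs: reciprocity (Lemma \ref{ReciprocityMaxwell}) yields $\mathcal{N}^*\vecg=\overline{\mathcal{N}\overline{\vecg}}$, so dense range of $\vecB$ (whose range contains that of $\mathcal{N}=\vecB\vecS$) reduces to injectivity of $\mathcal{N}$ itself; if $\mathcal{N}\vecg=0$ then $\vecu=\vecK\vecS\vecg$ has vanishing tangential trace on $\Sigma$, the non-Maxwell-eigenvalue assumption forces $\vecu=0$ in $C$, analyticity forces $\vecu=0$ throughout $D$, and only then do the boundary data of the exterior transmission problem on $\partial D$ vanish, so that $(\vecu+\vecv,\vecv)$ with $\vecv=\vecS\vecg$ solves the homogeneous problem \eqref{ETEMaxwell1}--\eqref{ETEMaxwell6}; the non-transmission-eigenvalue hypothesis and the mapping properties of the single layer potential then give $\vecg=0$. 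Your concluding steps (homogeneous exterior transmission problem implies $\vecv_g=0$ outside $D$, hence $\vecg=0$) agree with the paper's, but the bridge from the annihilation condition on $\Sigma$ to the homogeneous exterior transmission problem is the substance of the proof, and in your proposal it is deferred rather than supplied.
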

\begin{proof}
Assume $\vecu = \vecK \vecv$, then $\vecB \vecv= (\nu \times \vecu)|_\Sigma$. From the definition of the solution operator $\vecK$ we have that
$$
\mcurl^2 \vecu - k^2 \vecu=0 \quad \mbox{in} \quad D.
$$
Note that $\Sigma$ lies in the interior of $D$, then the interior regularity in Lemma \ref{MaxwellInteriorRegularity}  yields that $\nu \times \vecu \in \left(H^1(\Sigma\right))^3$ and hence $\vecB$ is compact.

To show that $\vecB$ has dense range it is sufficient to show that $\mathcal{\vecN}=\vecB \vecS$ has dense range, i.e. $\mathcal{\vecN}^*$ is injective. Note that (Theorem 3.1 in \cite{ZCS})
$$
\mathcal{\vecN}^* \vecg= \overline{\int_\Sigma \nu(\cdot) \times \vecE^s(\cdot,\by,\overline{\vecg(\by)})ds(\by)} = \overline{\mathcal{\vecN} \overline{\vecg}}.
$$
It is sufficient to show that $\mathcal{\vecN}$ is injective. Now we assume $\mathcal{\vecN} \vecg=0$, then $\vecu=\vecK \vecS \vecg$ satisfies
$$
\nu \times \vecu =0 \quad \mbox{on} \quad \Sigma.
$$
Note that
$$
\mcurl^2 \vecu - k^2 \vecu=0 \quad \mbox{in} \quad C
$$
and $k^2$ is not a Maxwell eigenvalue, we have that $\vecu=0$ in $C$. Let
$$
\vecv = \int_\Sigma \vecG (\cdot,\by)\vecg(\by) ds(\by).
$$
Note that $\vecu$ and $\vecv$ satisfies equations (\ref{Scattered})-(\ref{SilverMuller}), then let $\vecw=\vecu+\vecv$ in $\R^3 \backslash \overline{D}$ we have that $(\vecw,\vecv)$ is a solution to the homogeneous exterior transmission problem (\ref{ETEMaxwell1})-(\ref{ETEMaxwell2}). However $k$ is not a transmission eigenvalue, then $\vecv=0$ in $\R^3 \backslash \overline{D}$. The interior regularity in Lemma \ref{MaxwellInteriorRegularity}  yields $\vecv=0$ in $\R^3 \backslash \overline{C}$. Then $\nu \times \vecv=0$ on $\Sigma$. From the mapping properties of the singer layer potential (c.f. Theorem 3.4 in \cite{ZCS}), we have that $\nu \times \vecv=0$ on $\Sigma$ implies $\vecg=0$ on $\Sigma$. This completes the proof. 
\end{proof}

\subsection{Main result on the linear sampling method}
Now we are ready to prove the main theorem.
\begin{theorem} \label{MaxwellLSM}
For any $\bz \in \R^3 \backslash \overline{C}$ and polarization $\vech \in \R^3$, we have that
\begin{enumerate}
\item If $\bz \in \R^3 \backslash \overline{D}$, then for any $\epsilon>0$, there exists $\vecg_{\bz}^\epsilon \in \vecL_t^2(\Sigma)$ such that
$$
\| \mathcal{\vecN}  \vecg_{\bz}^\epsilon - \nu \times \vecG (\cdot,\bz) \vech\|_{\vecL_t^2(\Sigma)} < \epsilon,
$$
and $\|\vecS   \vecg_{\bz}^\epsilon\|_{\vecH(\mcurl,D_1 \backslash \overline{D})}$ remains bounded as $\epsilon \to 0$.
\item If $\bz \in D \backslash \overline{C}$, then for any $\epsilon>0$, there exits $\vecg_{\bz}^\epsilon \in \vecL_t^2(\Sigma)$ such that
$$
\| \mathcal{\vecN}  \vecg_{\bz}^\epsilon - \nu \times \vecG (\cdot,\bz) \vech\|_{\vecL_t^2(\Sigma)} < \epsilon,
$$
and such $\vecg_{\bz}^\epsilon \in \vecL_t^2(\Sigma)$ satisfies
$$
\lim\limits_{\epsilon \to 0} \|\vecS   \vecg_{\bz}^\epsilon\|_{\vecH(\mcurl,D_1 \backslash \overline{D})} =\infty.
$$
\end{enumerate}
\end{theorem}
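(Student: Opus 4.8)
The plan is to exploit the factorization $\mathcal{N}=\vecB\vecS$ together with the range characterization of Theorem~\ref{RangeBMaxwell} and the functional properties of $\vecB$ and $\vecS$ proved above. Throughout I would write $\phi_{\bz}:=\big(\nu\times\vecG(\cdot,\bz)\vech\big)|_\Sigma$; since $\bz\in\R^3\backslash\overline{C}$ we have $\bz\notin\Sigma$, so $\vecG(\cdot,\bz)\vech$ is smooth near $\Sigma$ and hence $\phi_{\bz}\in\vecL_t^2(\Sigma)$. A preliminary observation, used in both parts, is that $\mathcal{N}$ has dense range: because $\vecS$ has dense range in $\vecH$ and $\vecB$ is continuous, $\text{Range}(\vecB)\subseteq\overline{\vecB(\text{Range}(\vecS))}=\overline{\text{Range}(\mathcal{N})}$, and since $\vecB$ has dense range in $\vecL_t^2(\Sigma)$ by Lemma~\ref{DenseRangeBMaxwell}, taking closures yields $\overline{\text{Range}(\mathcal{N})}=\vecL_t^2(\Sigma)$. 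Thus for every $\epsilon>0$ a density $\vecg_{\bz}^\epsilon\in\vecL_t^2(\Sigma)$ with $\|\mathcal{N}\vecg_{\bz}^\epsilon-\phi_{\bz}\|_{\vecL_t^2(\Sigma)}<\epsilon$ always exists; what separates the two cases is the behavior of $\|\vecS\vecg_{\bz}^\epsilon\|_{\vecH(\mcurl,D_1\backslash\overline{D})}$.

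For part (i), $\bz\in\R^3\backslash\overline{D}$, so Theorem~\ref{RangeBMaxwell} gives $\phi_{\bz}\in\text{Range}(\vecB)$; fix $\vecf_{\bz}\in\vecH$ with $\vecB\vecf_{\bz}=\phi_{\bz}$. Using density of $\text{Range}(\vecS)$ in $\vecH$, I would choose $\vecg_{\bz}^\epsilon$ so that $\|\vecS\vecg_{\bz}^\epsilon-\vecf_{\bz}\|_{\vecH}$ is small enough that $\|\mathcal{N}\vecg_{\bz}^\epsilon-\phi_{\bz}\|_{\vecL_t^2(\Sigma)}=\|\vecB(\vecS\vecg_{\bz}^\epsilon-\vecf_{\bz})\|_{\vecL_t^2(\Sigma)}<\epsilon$, which is possible since $\vecB$ is bounded. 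The same choice controls the potential, since $\|\vecS\vecg_{\bz}^\epsilon\|_{\vecH(\mcurl,D_1\backslash\overline{D})}\le\|\vecS\vecg_{\bz}^\epsilon\|_{\vecH}\le\|\vecf_{\bz}\|_{\vecH}+o(1)$ remains bounded as $\epsilon\to0$.

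For part (ii), $\bz\in D\backslash\overline{C}\subset D$, so Theorem~\ref{RangeBMaxwell} gives $\phi_{\bz}\notin\text{Range}(\vecB)$, while the approximation is still available from the dense-range argument above. To obtain the blow-up I would argue by contradiction: suppose $\|\vecS\vecg_{\bz}^\epsilon\|_{\vecH(\mcurl,D_1\backslash\overline{D})}$ stayed bounded along a sequence $\epsilon\to0$. Since $\vecH(\mcurl,D_1\backslash\overline{D})$ is a Hilbert space, a subsequence satisfies $\vecS\vecg_{\bz}^\epsilon\rightharpoonup\psi$ weakly. As $\vecB$ is compact (its proof in Lemma~\ref{DenseRangeBMaxwell} uses only the $\vecH(\mcurl)$-continuity of $\vecK$ from Lemma~\ref{ForwardMaxwell} and the interior regularity of $\vecK\vecf$ on $\Sigma\subset\subset D$ from Lemma~\ref{MaxwellInteriorRegularity}), it maps this weakly convergent sequence to a strongly convergent one, so $\mathcal{N}\vecg_{\bz}^\epsilon=\vecB\vecS\vecg_{\bz}^\epsilon\to\vecB\psi$. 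But $\mathcal{N}\vecg_{\bz}^\epsilon\to\phi_{\bz}$, whence $\phi_{\bz}=\vecB\psi\in\text{Range}(\vecB)$, contradicting $\phi_{\bz}\notin\text{Range}(\vecB)$. Therefore $\|\vecS\vecg_{\bz}^\epsilon\|_{\vecH(\mcurl,D_1\backslash\overline{D})}\to\infty$.

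The delicate point, and the step I expect to be the main obstacle, is the bookkeeping of norms in part (ii): the statement measures $\vecS\vecg_{\bz}^\epsilon$ in the weaker $\vecH(\mcurl,D_1\backslash\overline{D})$ norm, whereas $\vecH$ carries the stronger $\vecH^1(\mcurl)$ norm. One must verify that $\vecB$ is compact with respect to the $\vecH(\mcurl)$ topology and that the weak limit $\psi$ lies in the domain on which the non-membership of Theorem~\ref{RangeBMaxwell} applies, so that the contradiction is legitimate. This is precisely where the interior-regularity smoothing of the tangential trace on $\Sigma\subset\subset D$ does the essential work.
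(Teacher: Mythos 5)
Your proposal is correct and follows essentially the same route as the paper: part (i) via the range characterization of Theorem \ref{RangeBMaxwell} plus density of $\mbox{Range}(\vecS)$ in $\vecH$ and boundedness of $\vecB$, and part (ii) via density of $\mbox{Range}(\vecB)$ combined with a contradiction argument using weak convergence of a bounded subsequence and compactness of $\vecB$. Your preliminary packaging of the two density steps into ``$\mathcal{N}$ has dense range'' is just a cosmetic reorganization of the paper's triangle-inequality argument, and the norm bookkeeping you flag is handled (indeed, glossed over) identically in the paper, which measures the approximation in $\vecH(\mcurl, D_1\backslash\overline{D})$ throughout.
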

\begin{proof}
{\textit{(i)}}\quad Assume that $\bz \in \R^3 \backslash \overline{D}$, from Theorem \ref{RangeBMaxwell} we have that
$$
 \big(\nu \times \vecG (\cdot,\bz) \vech \big)|_\Sigma \in \mbox{Range}(\vecB).
$$
Therefore there exists $\vecu_{\bz} \in \vecH$ such that $\vecB \vecu_{\bz}= \big(\nu \times \vecG (\cdot,\bz) \vech\big)|_\Sigma$. As $\vecS$ has dense range in $\vecH$, there exists $\vecg_{\bz}^\epsilon$ such that
$$
\|\vecS   \vecg_{\bz}^\epsilon - \vecu_{\bz}\|_{\vecH(\mcurl,D_1 \backslash \overline{D})} \le \frac{1}{\|\vecB\|} \epsilon.
$$
This immediately yields that $\|\vecS   \vecg_{\bz}^\epsilon\|_{\vecH(\mcurl,D_1 \backslash \overline{D})}$ remains bounded as $\epsilon \to 0$.
Applying $\vecB$ to the above inequality yields
$$
\|\vecB\vecS   \vecg_{\bz}^\epsilon - \vecB\vecu_{\bz}\|_{\vecL_t^2(\Sigma)} < \epsilon, \quad i.e., \quad  \|\mathcal{N}   \vecg_{\bz}^\epsilon -\nu \times \vecG (\cdot,\bz) \vech\|_{\vecL_t^2(\Sigma)} < \epsilon.
$$
{\textit{(ii)}}\quad Assume $\bz \in D \backslash \overline{C}$, from Theorem \ref{RangeBMaxwell} we have that
$$
 \nu \times \vecG (\cdot,\bz) \vech \not\in \mbox{Range}(\vecB).
$$
However from Lemma \ref{DenseRangeBMaxwell} we have that $\vecB$ has dense range in $\vecL_t^2(\Sigma)$. Pick $\vecu_z^\epsilon \in \vecH$ such that
$$
\|\vecB \vecu_{\bz}^\epsilon- \nu \times \vecG (\cdot,\bz) \vech\|_{\vecL_t^2(\Sigma)} < \frac{\epsilon}{2}.
$$
As $\vecS$ has dense range in $\vecH$, there exists $\vecg_{\bz}^\epsilon$ such that
$$
\|\vecS   \vecg_{\bz}^\epsilon - \vecu_{\bz}\|_{\vecH(\mcurl,D_1 \backslash \overline{D})} \le \frac{1}{2\|\vecB\|} \epsilon.
$$
Then
\begin{eqnarray*}
&&\|\vecN   \vecg_{\bz}^\epsilon -\nu \times \vecG (\cdot,\bz) \vech\|_{\vecL_t^2(\Sigma)} \\
&\le&\|\vecB\vecS   \vecg_{\bz}^\epsilon - \vecB\vecu_{\bz}^\epsilon\|_{\vecL_t^2(\Sigma)} + \|\vecB \vecu_{\bz}^\epsilon -\nu \times \vecG (\cdot,\bz) \vech\|_{\vecL_t^2(\Sigma)} \\
&<& \epsilon.
\end{eqnarray*}

We then show that
$$
\lim\limits_{\epsilon \to 0} \|\vecS   \vecg_{\bz}^\epsilon\|_{\vecH(\mcurl,D_1 \backslash \overline{D})} =\infty.
$$
Assume on the contrary that $ \|\vecS   \vecg_{\bz}^\epsilon\|_{\vecH(\mcurl,D_1 \backslash \overline{D})}  \le c$ as $\epsilon \to 0$ for some constant $c$. Then there exists a subsequence $\vecg_{\bz}^{\epsilon_n}$ such that $\vecS   \vecg_{\bz}^{\epsilon_n}$ converges weakly to $\vecu^*$ in $\vecH$ as $n \to \infty$ (${\epsilon_n} \to 0$). From  Lemma \ref{DenseRangeBMaxwell} we have that $\vecB$ is compact, then
$$
\lim\limits_{n \to \infty} \vecB \vecS   \vecg_{\bz}^{\epsilon_n} = \vecB \vecu^*.
$$
Note that we have shown
\begin{eqnarray*}
\|\vecN   \vecg_{\bz}^\epsilon -\nu \times \vecG (\cdot,\bz) \vech\|_{\vecL_t^2(\Sigma)}< \epsilon.
\end{eqnarray*}
Then
$$
\lim\limits_{n \to \infty} \vecB \vecS   \vecg_{\bz,n}^\epsilon = \vecN \vecg_{\bz,n}^\epsilon = \nu \times \vecG (\cdot,\bz) \vech.
$$
This implies that $ \vecB \vecu^* = \big(\nu \times \vecG (\cdot,\bz) \vech \big)|_\Sigma$, which contradicts $\big(\nu \times \vecG (\cdot,\bz) \vech \big)|_\Sigma \not\in \mbox{Range}(\vecB)$. This completes the proof. 
\end{proof}

The above theorem implies that $\|\vecS   \vecg_{\bz}^\epsilon\|_{\vecH(\mcurl,D_1 \backslash \overline{D})}$ have different properties for $\bz \in \R^3 \backslash \overline{D}$ and $\bz \in D \backslash \overline{C}$. In practice we use $\frac{1}{\|\vecg_{\bz}^\epsilon\|_{\vecL^2(\Sigma)}}$ as the imaging function  to reconstruct the cavity $D$, as demonstrated in our numerical example Section \ref{Numerics}. { We remark that a complete theoretical characterization of the cavity can be expected using the factorization method \cite{kirsch2008factorization} and generalized linear sampling method \cite{audibert2014generalized,audibert2015qualitative}.}
\begin{figure}[hb!]
\includegraphics[width=0.48\linewidth]{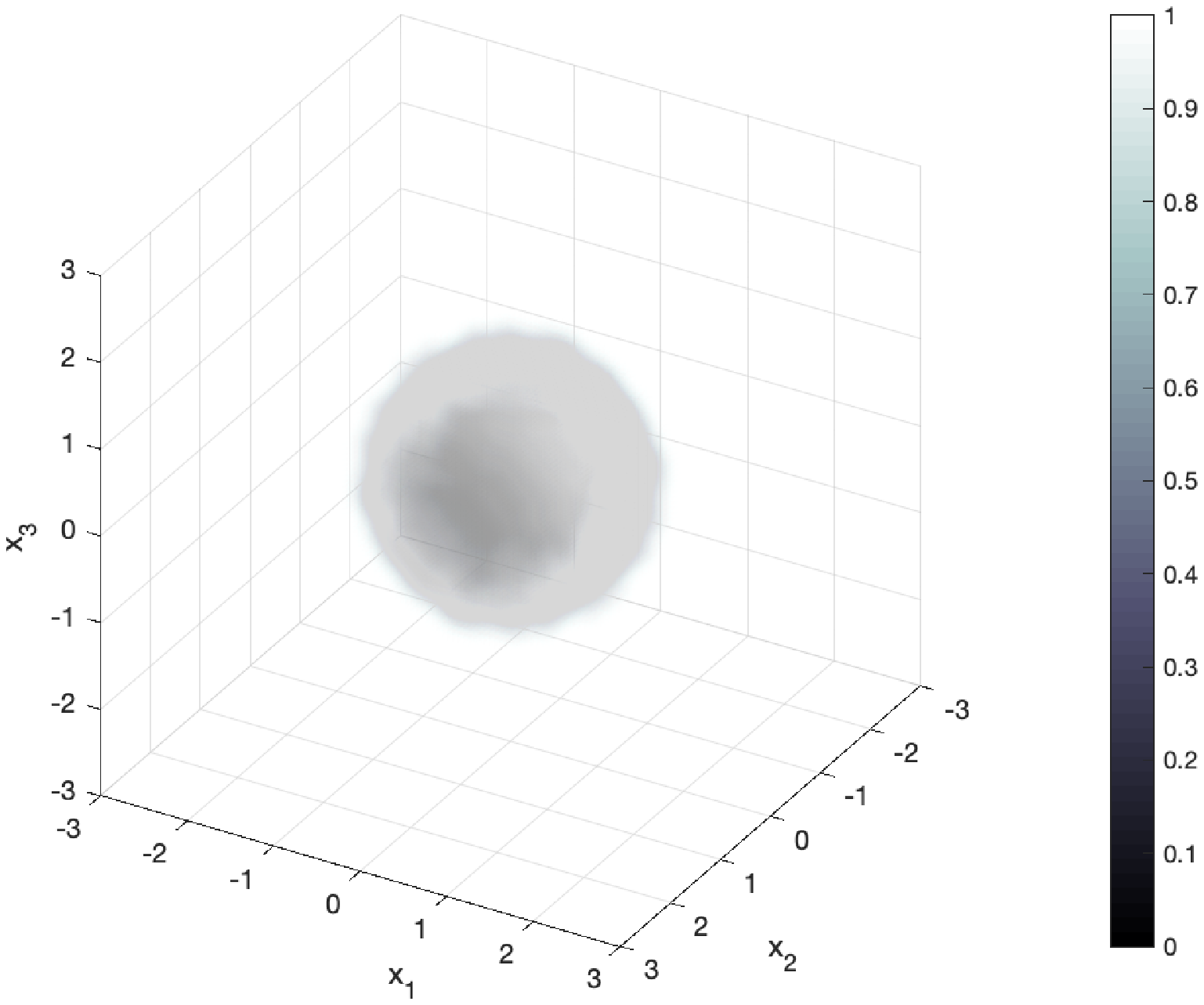}
\includegraphics[width=0.48\linewidth]{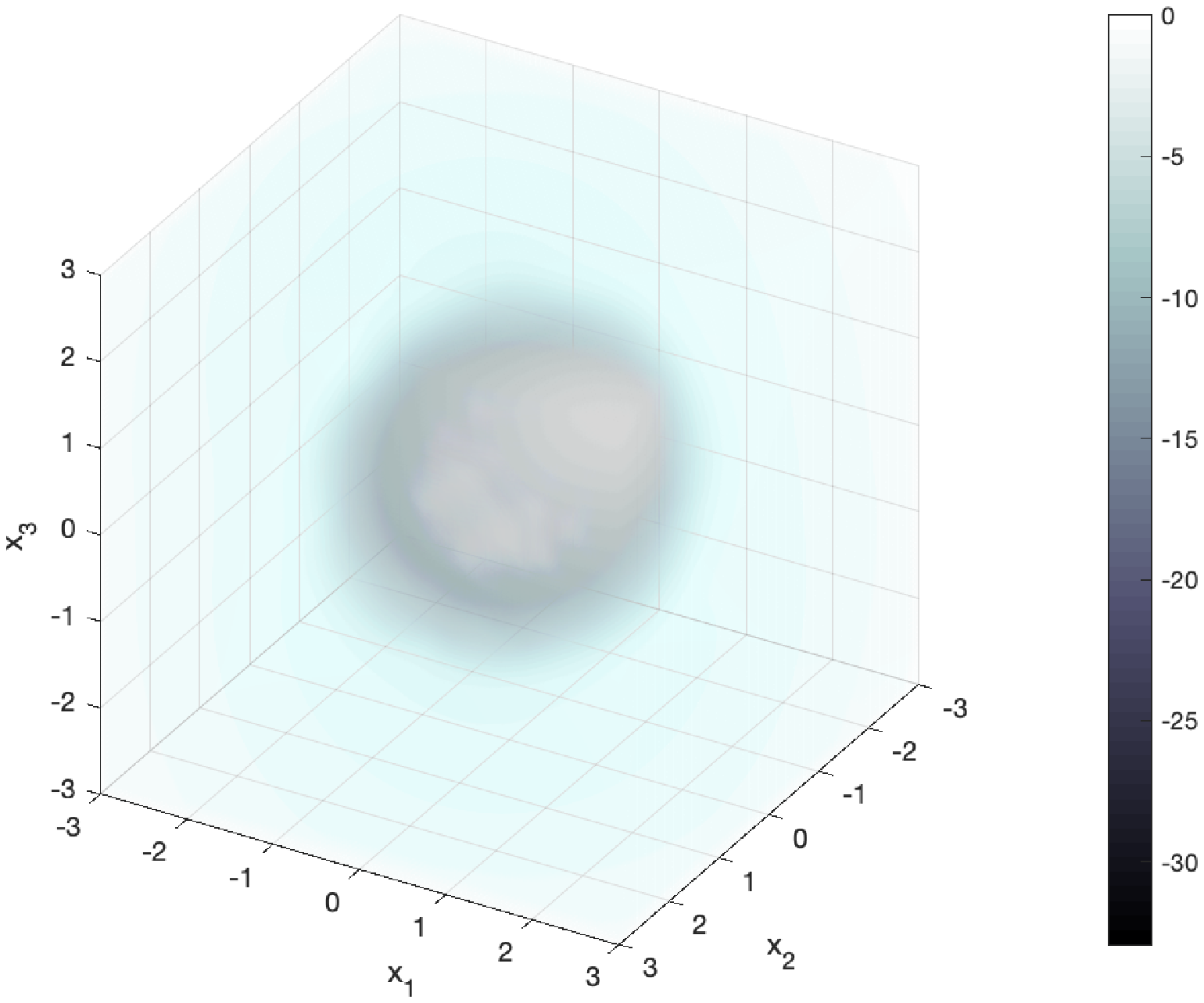}
     \caption{
     \linespread{1}
     Three dimensional image of a ball cavity. Left: exact geometry. Right: reconstructed geometry.
     } \label{ball}
    \end{figure}

    \begin{figure}[hb!]
\includegraphics[width=0.32\linewidth]{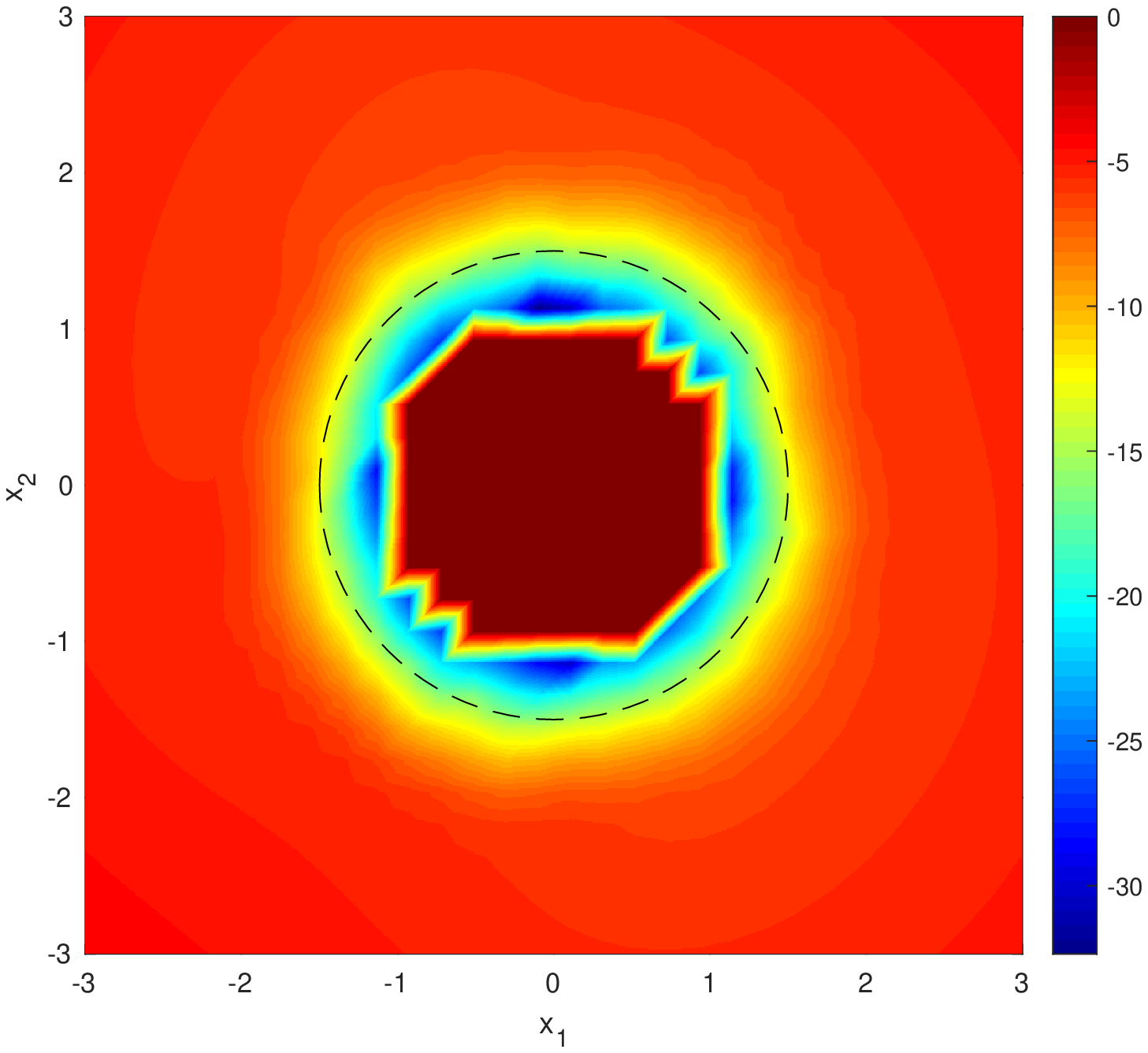}
\includegraphics[width=0.32\linewidth]{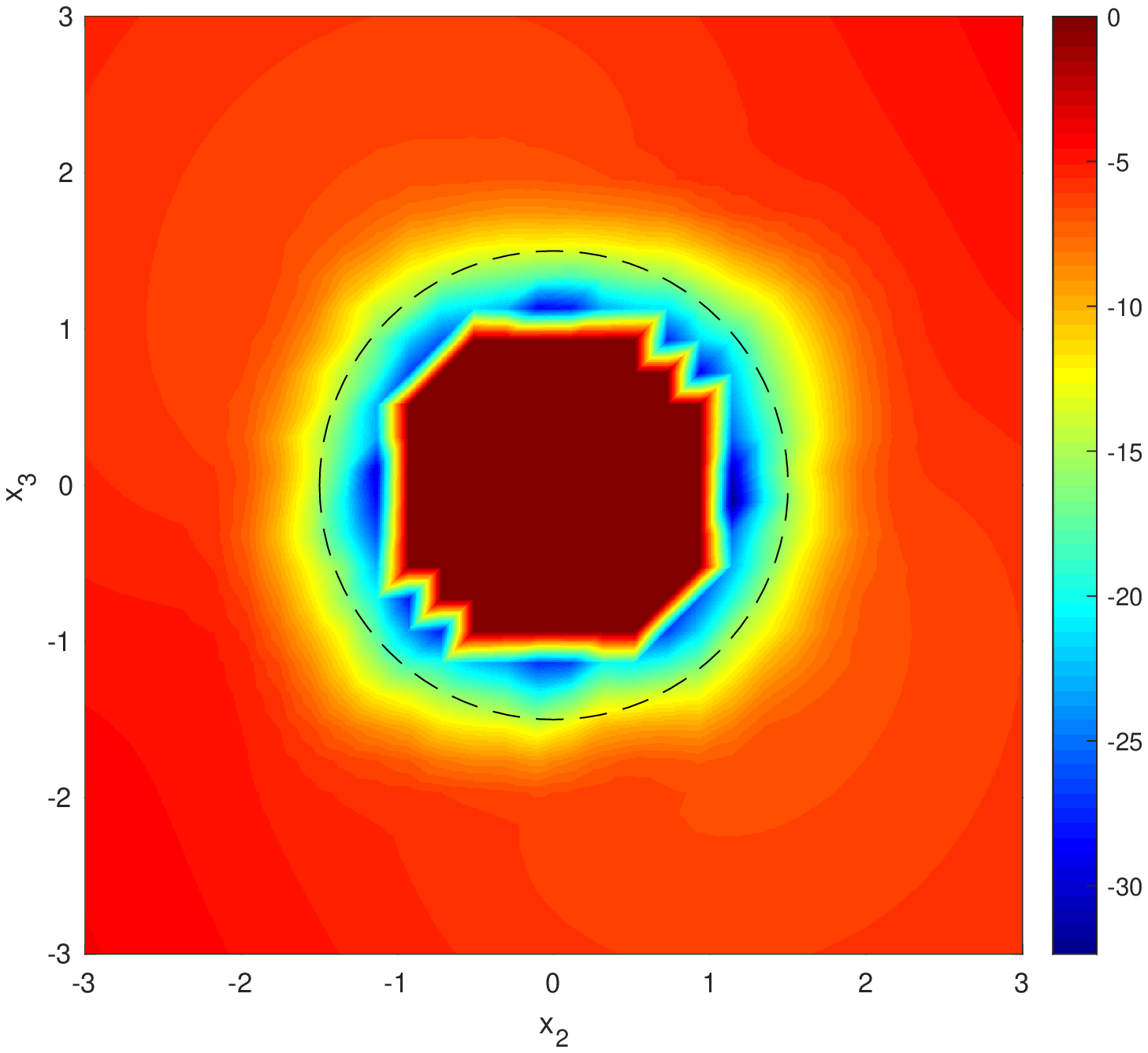}
\includegraphics[width=0.32\linewidth]{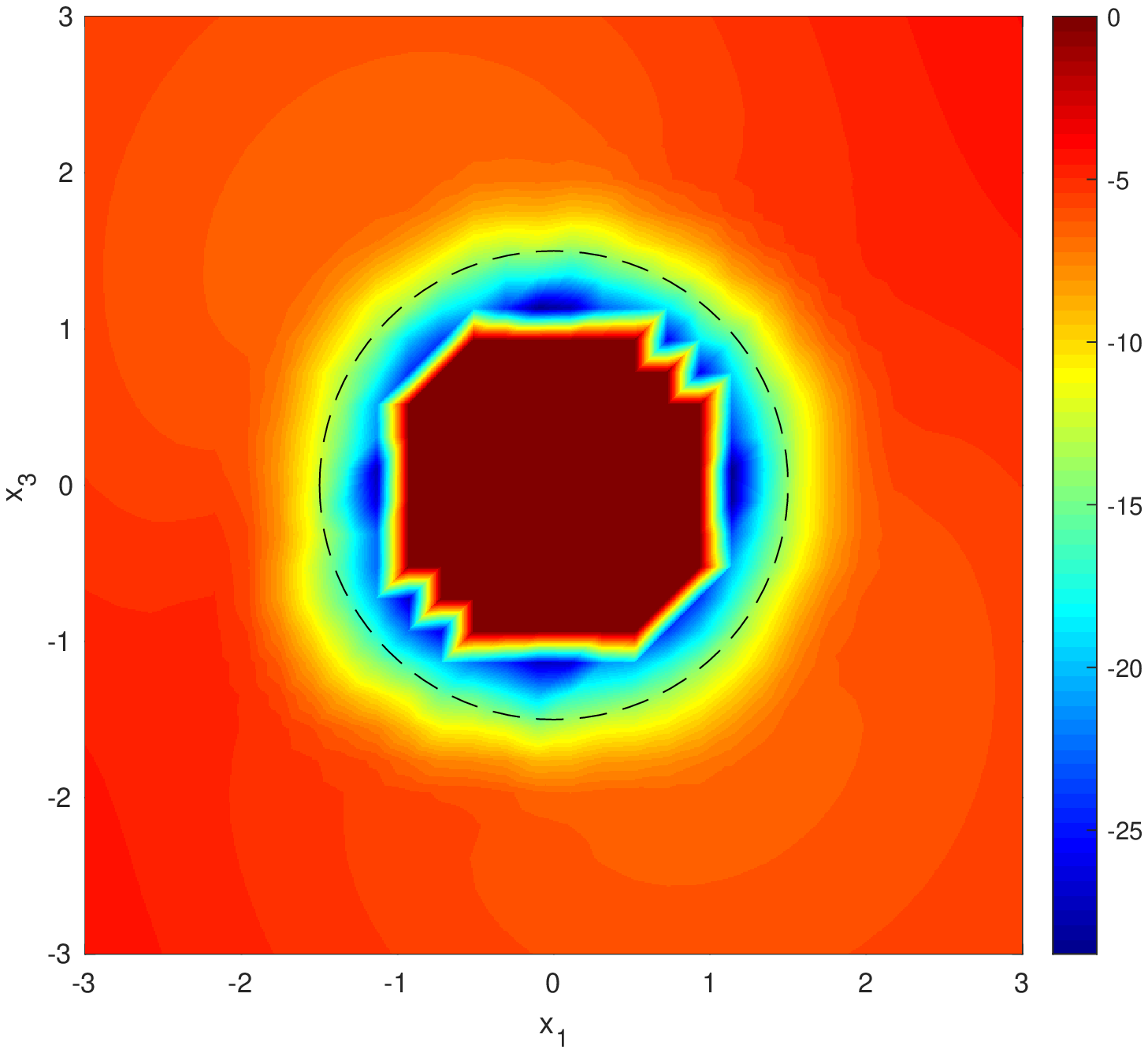}
     \caption{
     \linespread{1}
     Cross-section images of the ball cavity.
     } \label{ball_cs}
    \end{figure}
\section{Numerical Examples}\label{Numerics}
In this section, we provide several numerical examples to illustrate the viability of the linear sample method.

    \begin{figure}[ht!]
\includegraphics[width=0.48\linewidth]{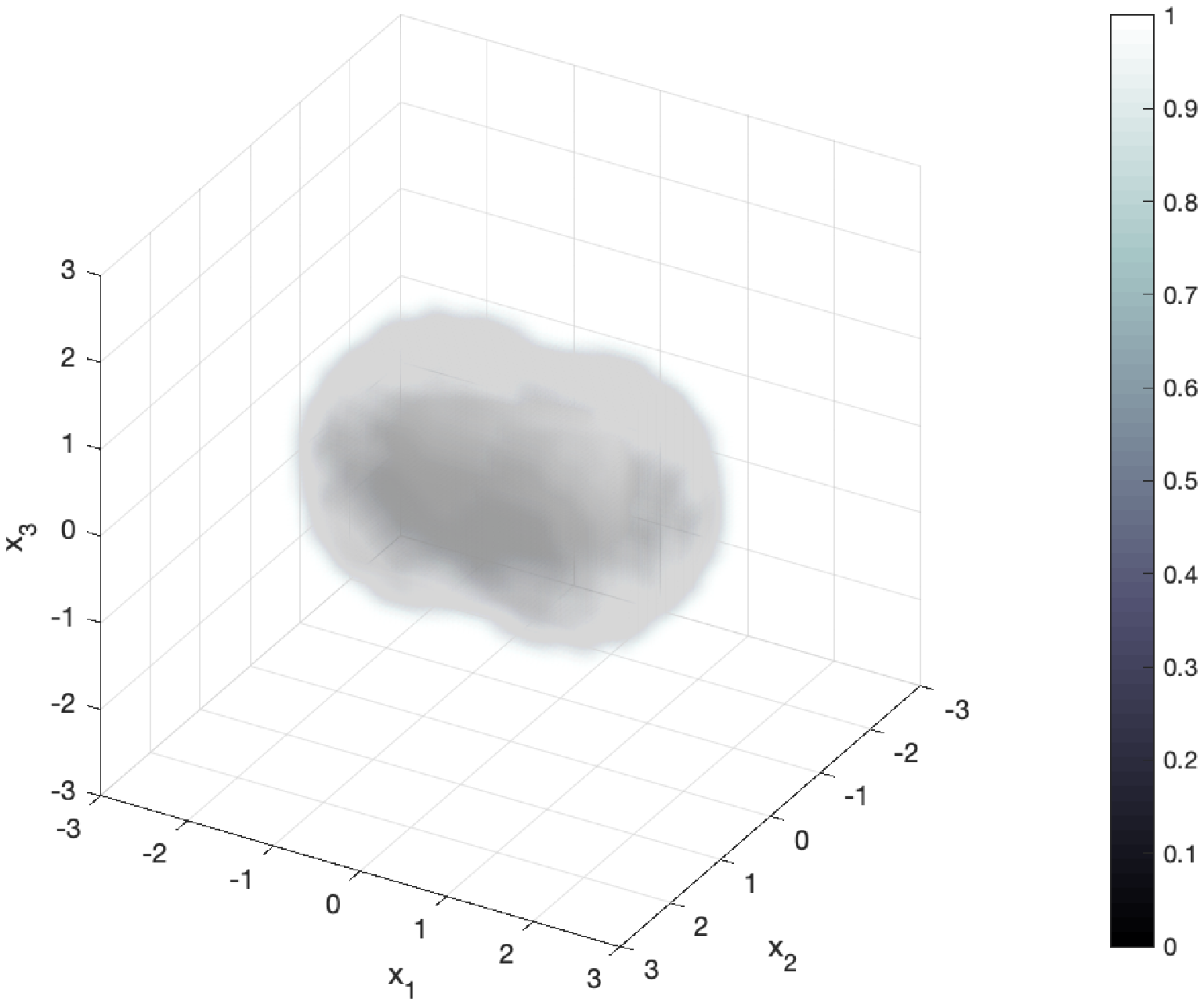}
\includegraphics[width=0.48\linewidth]{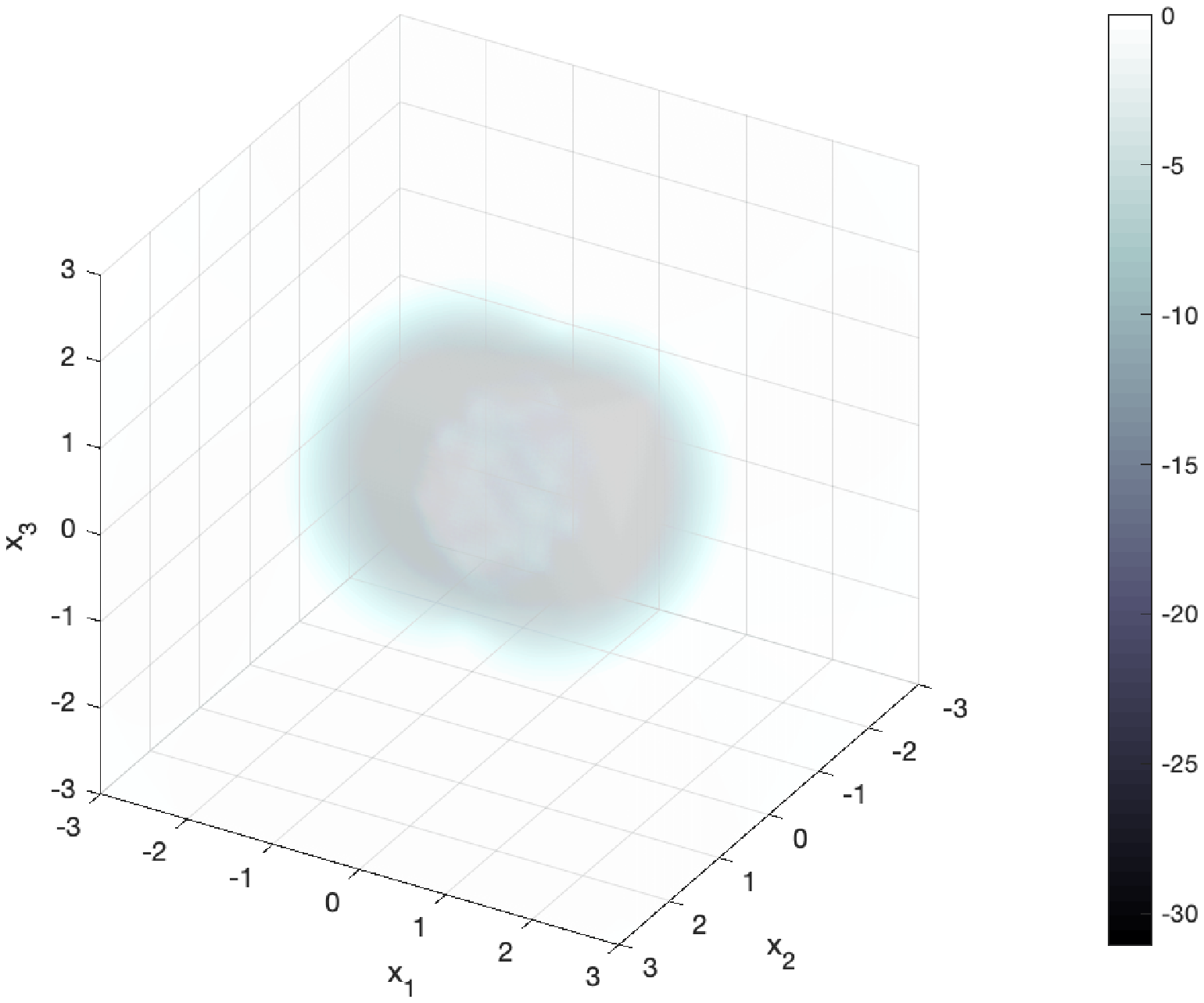}
     \caption{
     \linespread{1}
     Three dimensional image of a peanut-shape cavity. Left: exact geometry. Right: reconstructed geometry.
     } \label{peanut}
    \end{figure}
        \begin{figure}[ht!]
\includegraphics[width=0.32\linewidth]{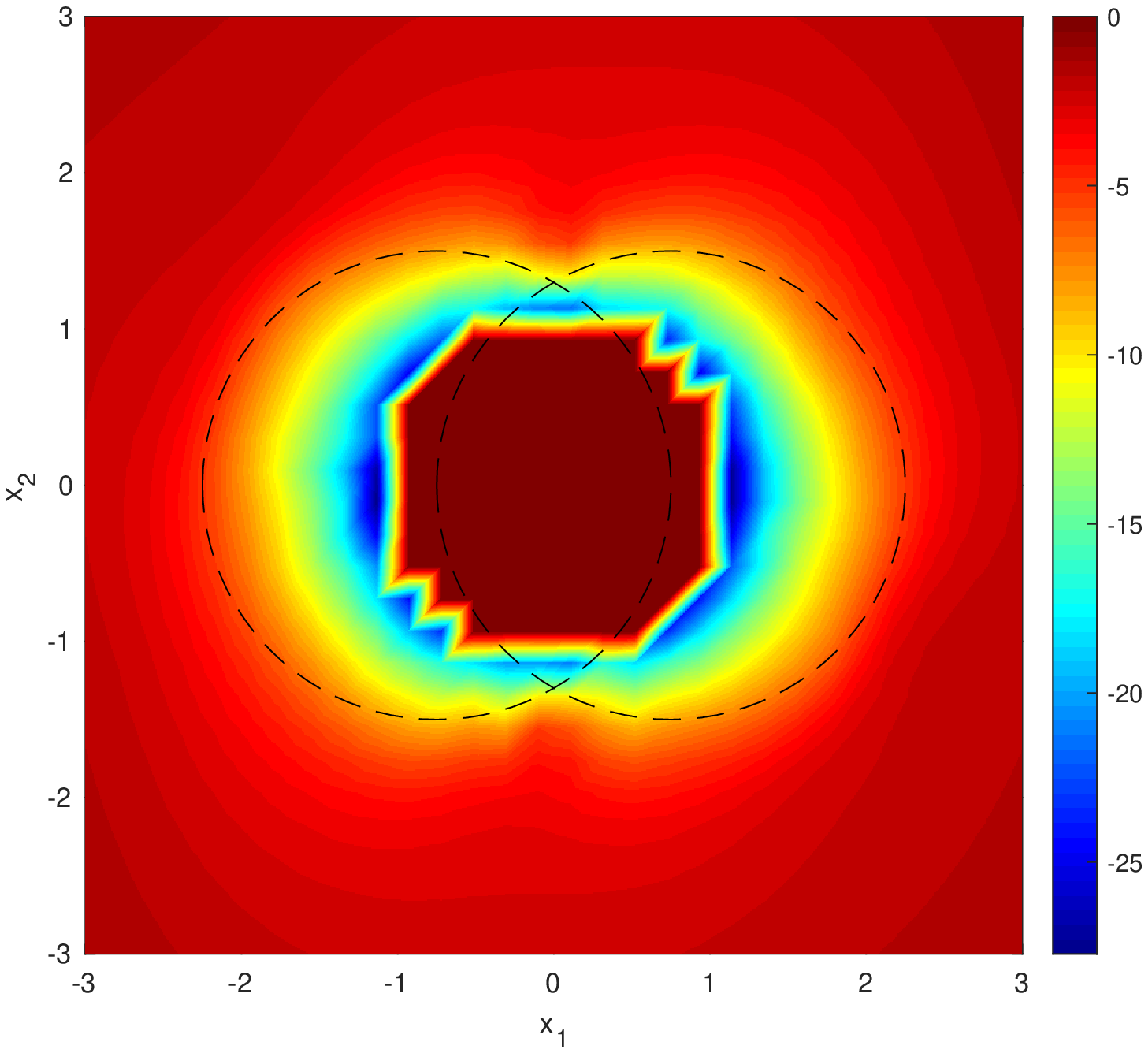}
\includegraphics[width=0.32\linewidth]{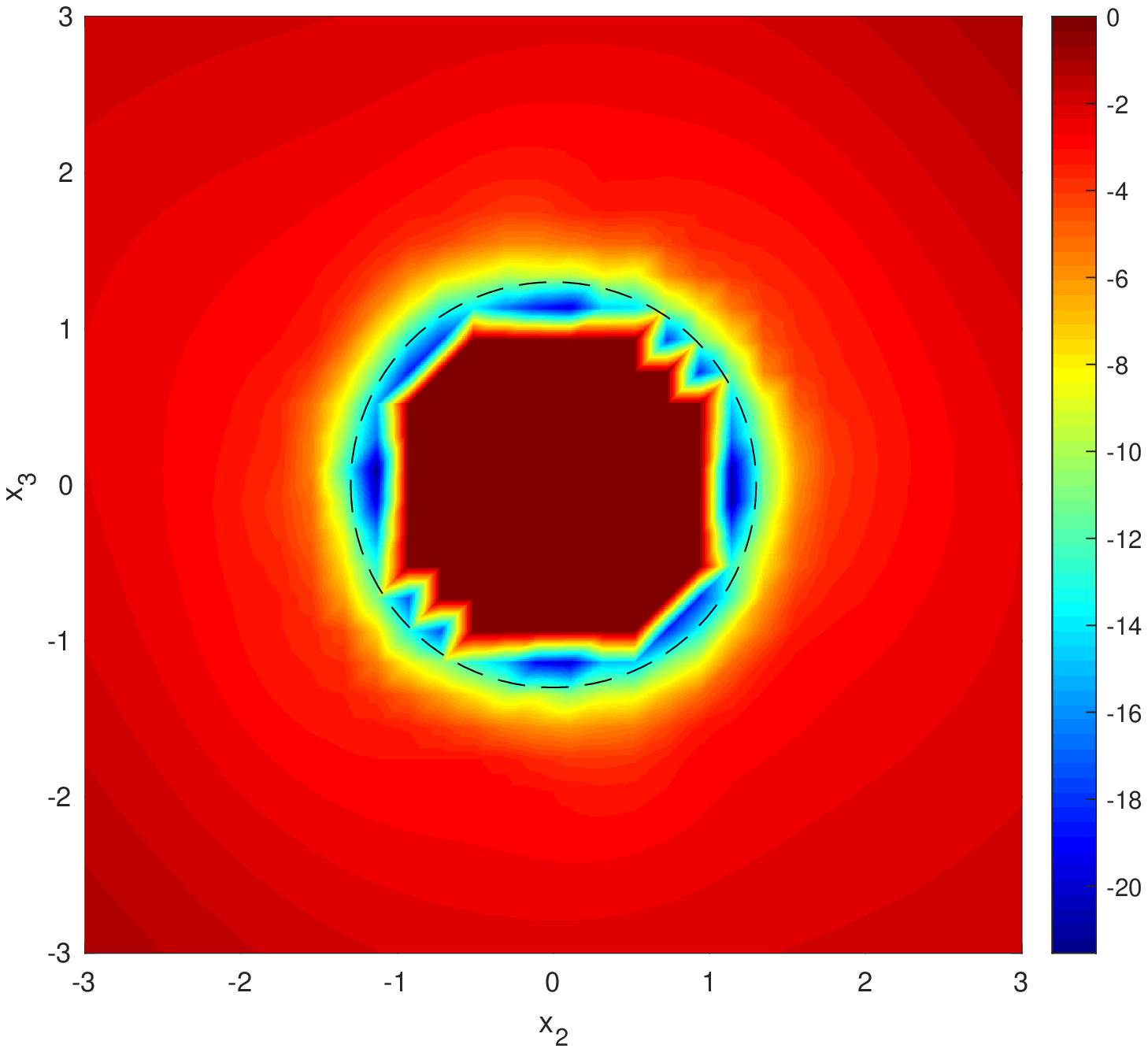}
\includegraphics[width=0.32\linewidth]{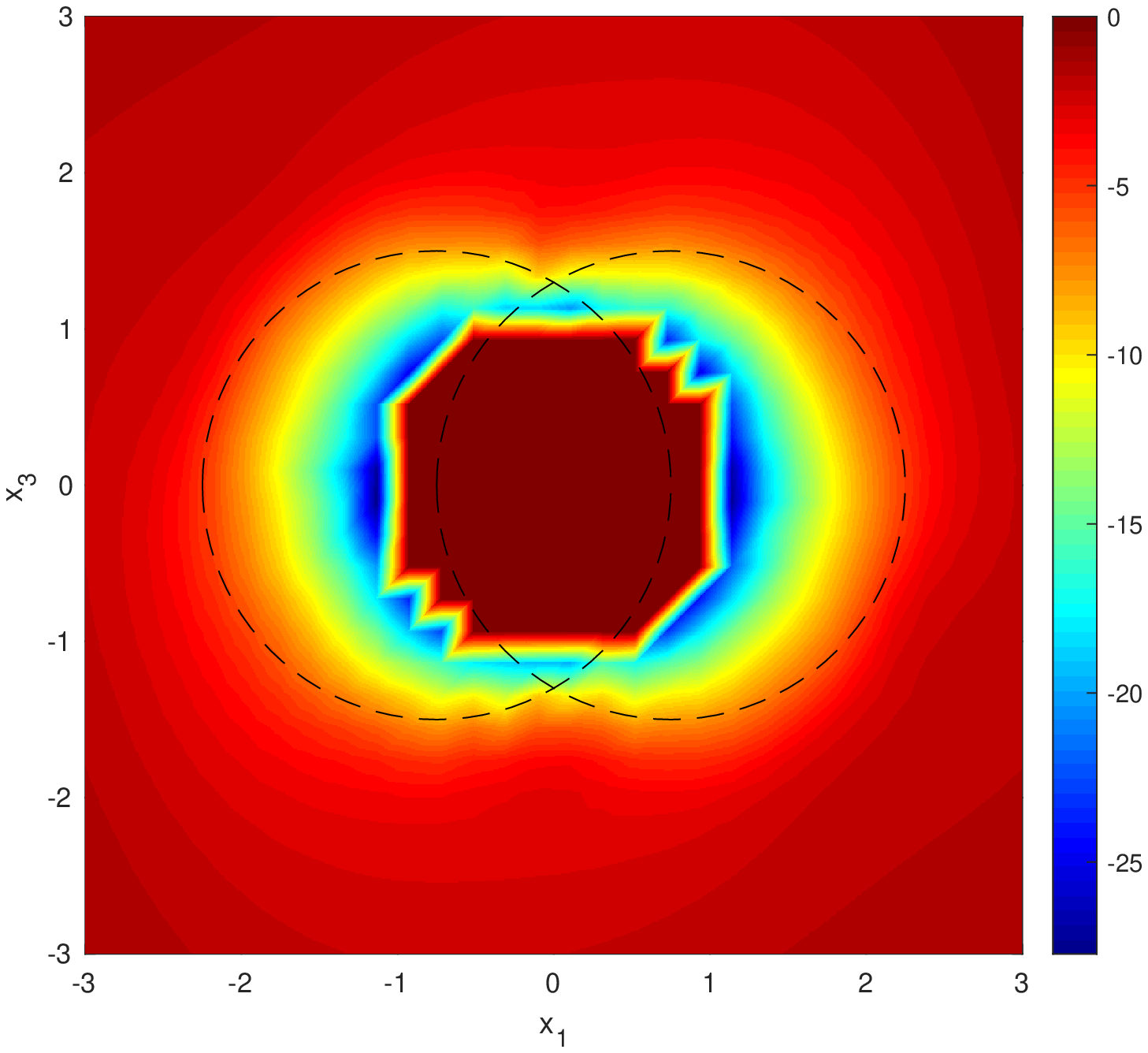}
     \caption{
     \linespread{1}
     Cross-section images of the peanut-shape cavity.
     } \label{peanut_cs}
    \end{figure}

To begin with, let $(r,\theta,\phi)$ denote the spherical coordinate such that $\bx=(x_1,x_2,x_3)$ can be represented by
\begin{eqnarray*}
\Bigg\{
\begin{array}{ccc}
x_1  & =  & r \sin(\theta) \cos(\phi)  \\
x_2  & = &  r \sin(\theta) \sin(\phi) \\
x_3  & = &  r \cos(\theta)
\end{array}, \quad \mbox{where} \quad r \ge0, ~\theta \in [0,\pi], ~\phi \in [0,2\pi].
\end{eqnarray*}
We further denote the local orthogonal unit vectors on the sphere in the directions of increasing $(r,\theta,\phi)$ as $(\hat{\bx}, \hat{\be}_1,\hat{\be}_2)$.

We chose $\Sigma$ as the unit sphere. To discretize the near field equation
\begin{equation} \label{numerics near field eqn}
\mathcal{\vecN}  \vecg_{\bz}^\epsilon = \nu \times \vecG (\cdot,\bz) \vech \mbox{ on } \Sigma,
\end{equation}
 we chose a set of spherical grid points $\{\theta_j,\phi_j\}_{j=1}^n$ on the sphere. At each grid point $\bx_j$ with spherical coordinate $(1,\theta_j,\phi_j)$, let the corresponding local orthogonal unit vector be $(\hat{\bx}(\bx_j), \hat{\be}_1(\bx_j),\hat{\be}_2(\bx_j) )$. Now equation \eqref{numerics near field eqn} can be discretized as
\begin{equation} \label{lsm_discre}
\sum_{j=1}^n w_j \hat{\bx}(\bx_i) \times \vecE^s(\bx_i,\by_j,\vecg(\by_j))  = \hat{\bx}(\bx_i) \times \vecG (\bx_i,\bz) \vech, \quad \forall i=1,\cdots,n,
\end{equation}
where $w_j$ is the quadrature weight at $\by_j$, $\bx_i$ is given by spherical coordinate $(1,\theta_i,\phi_i)$, and $\by_j$ is given by spherical coordinate $(1,\theta_j,\phi_j)$. Note that $\hat{\be}_1(\bx_j)$ and $\hat{\be}_2(\bx_j)$ are the local basis at $\bx_j$, equation \eqref{lsm_discre} is then equivalent to
$$
\sum_{j=1}^n w_j \hat{\be}_\ell(\bx_i) \cdot \vecE^s(\bx_i,\by_j,\vecg(\by_j))  = \hat{\be}_\ell(\bx_i)\cdot \vecG (\bx_i,\bz) \vech, \quad \forall i=1,\cdots,n, ~ \ell=1,2.
$$
We further apply the reciprocity relation in Lemma \ref{ReciprocityMaxwell} to obtain
\begin{equation} \label{lsm_discre1}
\sum_{j=1}^n w_j \vecg(\by_j) \cdot \vecE^s(\bx_i,\by_j,\hat{\be}_\ell(\bx_i))  = \hat{\be}_\ell(\bx_i)\cdot \vecG (\bx_i,\bz) \vech, \quad \forall i=1,\cdots,n, ~  \ell=1,2.
\end{equation}
Let $\vecg(\by_j) = g_1(\by_j) \hat{\be}_1(\by_j)+ g_2(\by_j) \hat{\be}_2(y_j)$, then  \eqref{lsm_discre1} can be written as
\begin{eqnarray} \label{lsm_discre2}
\sum_{m=1}^2\sum_{j=1}^n w_j g_m(\by_j) \hat{\be}_m(\by_j) \cdot \vecE^s(\bx_i,\by_j,\hat{\be}_\ell(\bx_i))  = \hat{\be}_\ell(\bx_i)\cdot \vecG (\bx_i,\bz) \vech,
\end{eqnarray}
for all $i=1,\cdots,n$, and $\ell=1,2$. Solve \eqref{lsm_discre2} for $(g_1(\by_j),g_2(\by_j))_{j=1}^n$, then we can approximate $\vecg$.
    \begin{figure}[hb!]
\includegraphics[width=0.48\linewidth]{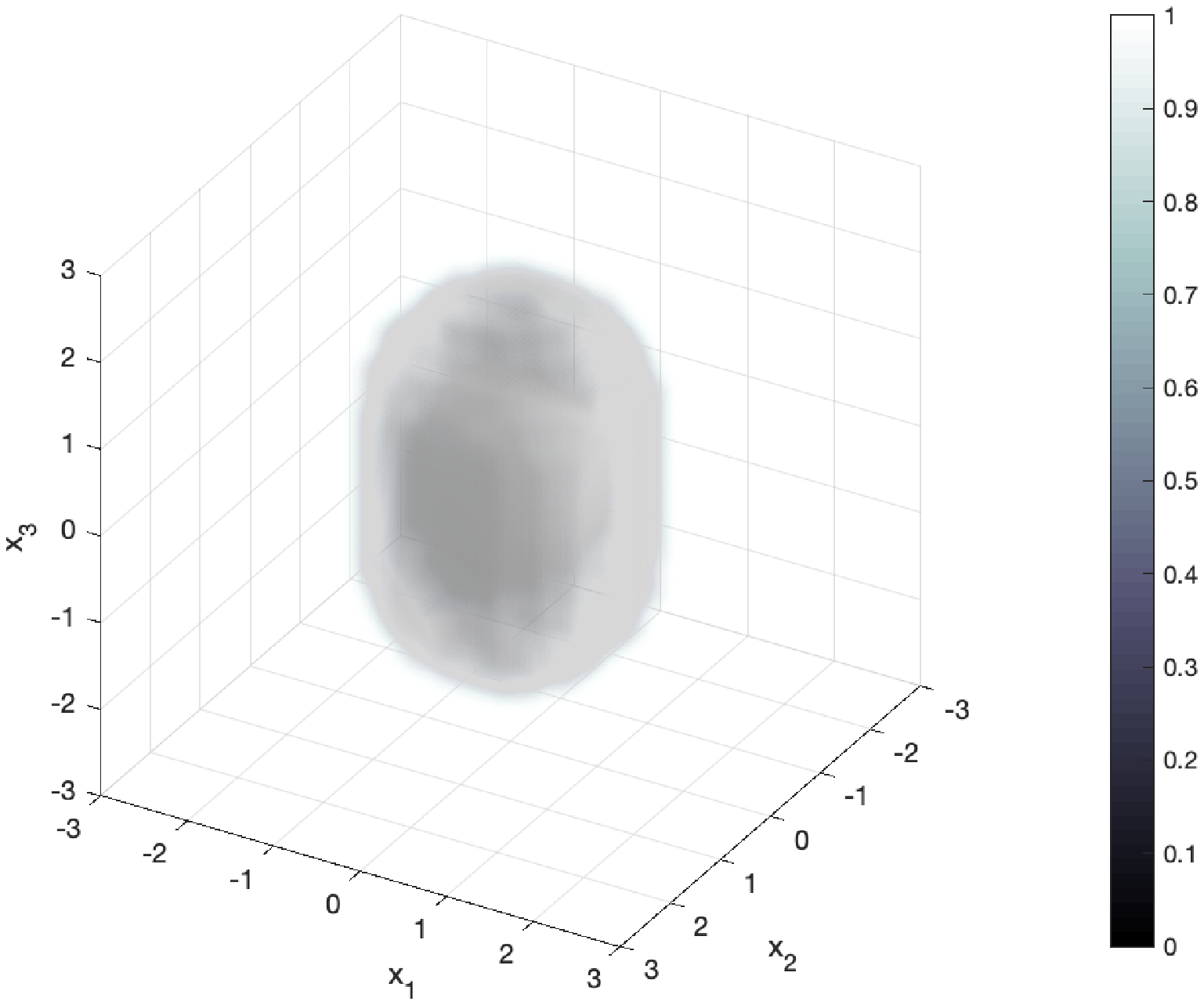}
\includegraphics[width=0.48\linewidth]{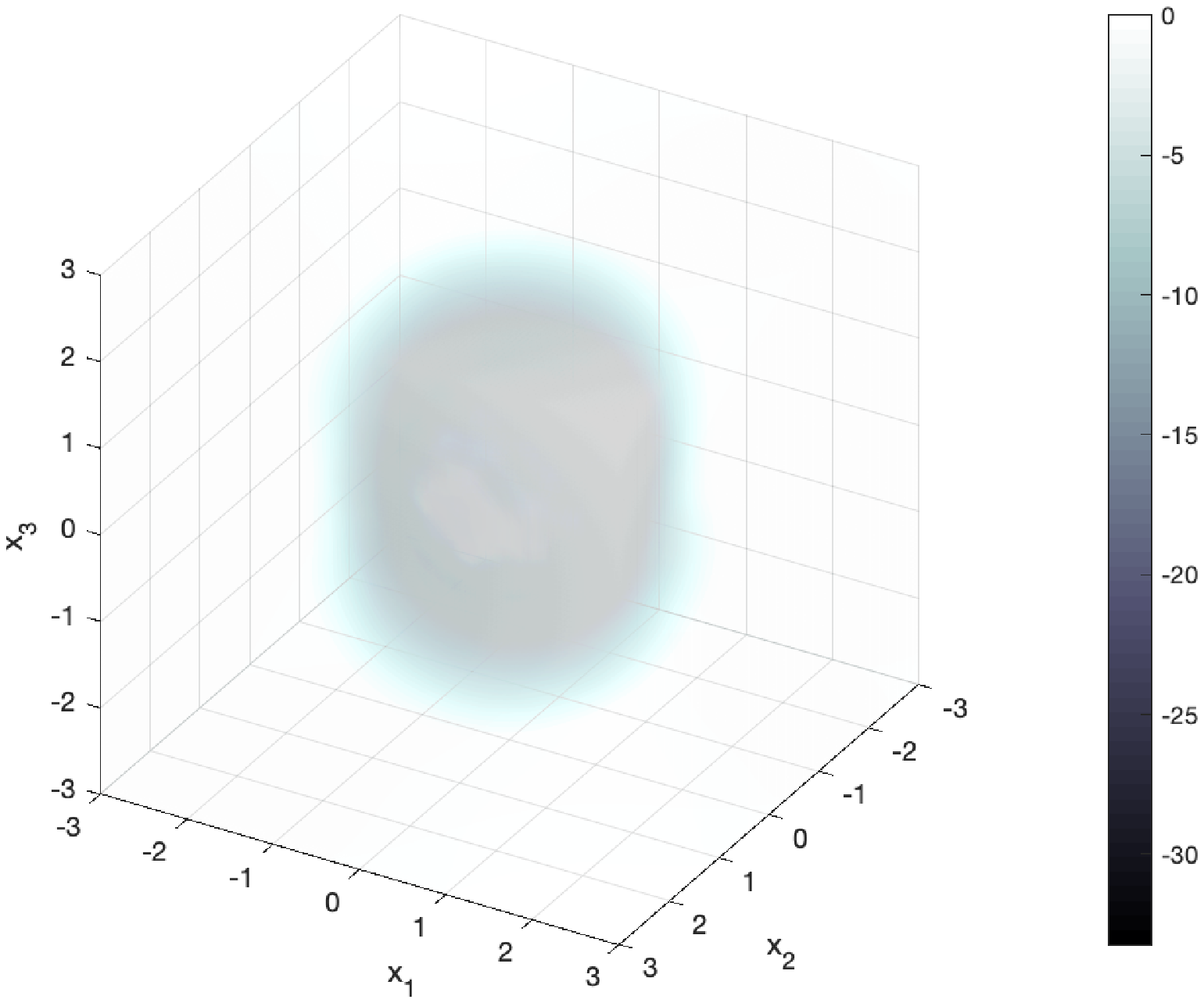}
     \caption{
     \linespread{1}
     Three dimensional image of a cylinder  cavity. Left: exact geometry. Right: reconstructed geometry.
     } \label{cyl}
    \end{figure}
        \begin{figure}[hb!]
\includegraphics[width=0.32\linewidth]{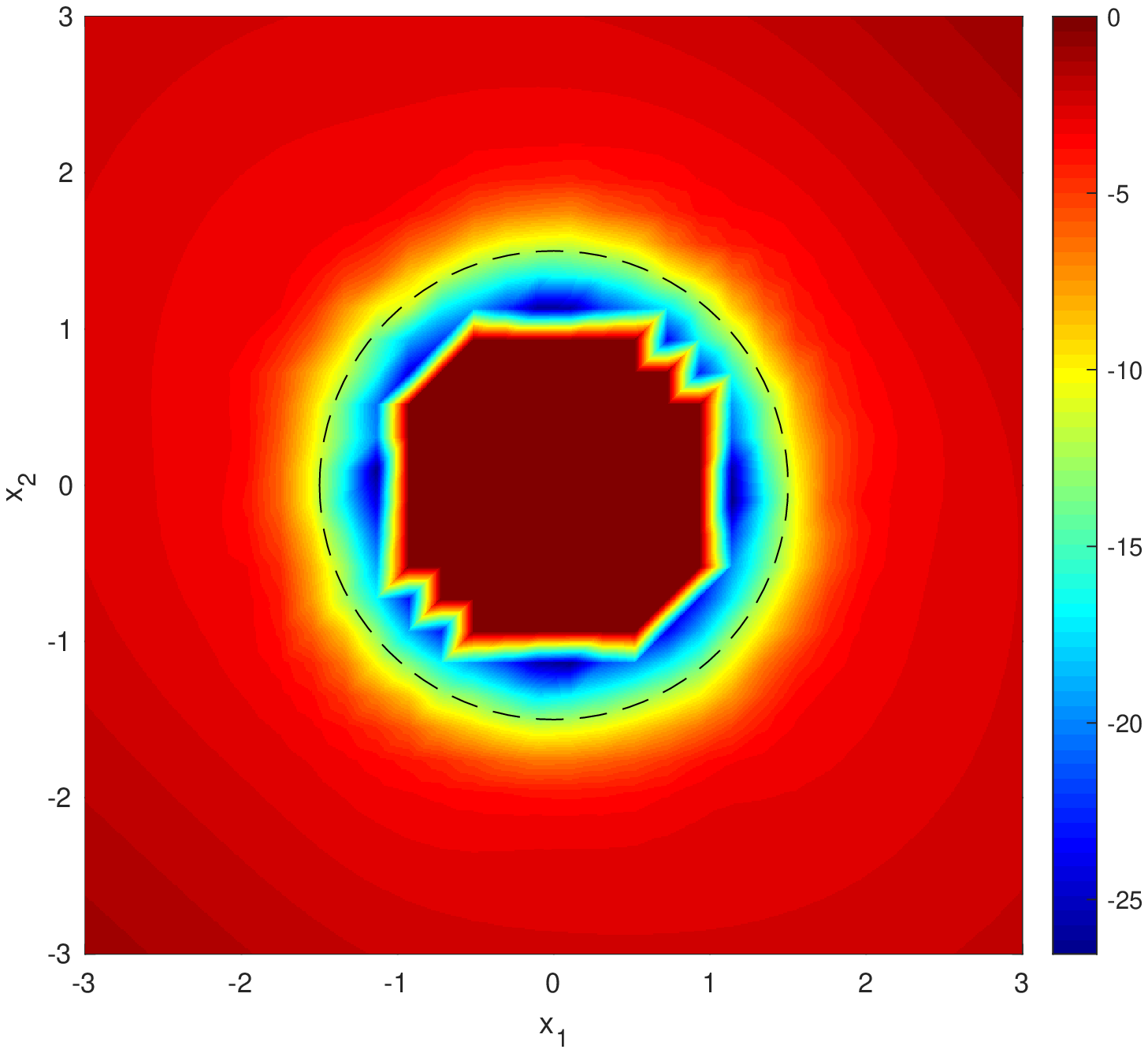}
\includegraphics[width=0.32\linewidth]{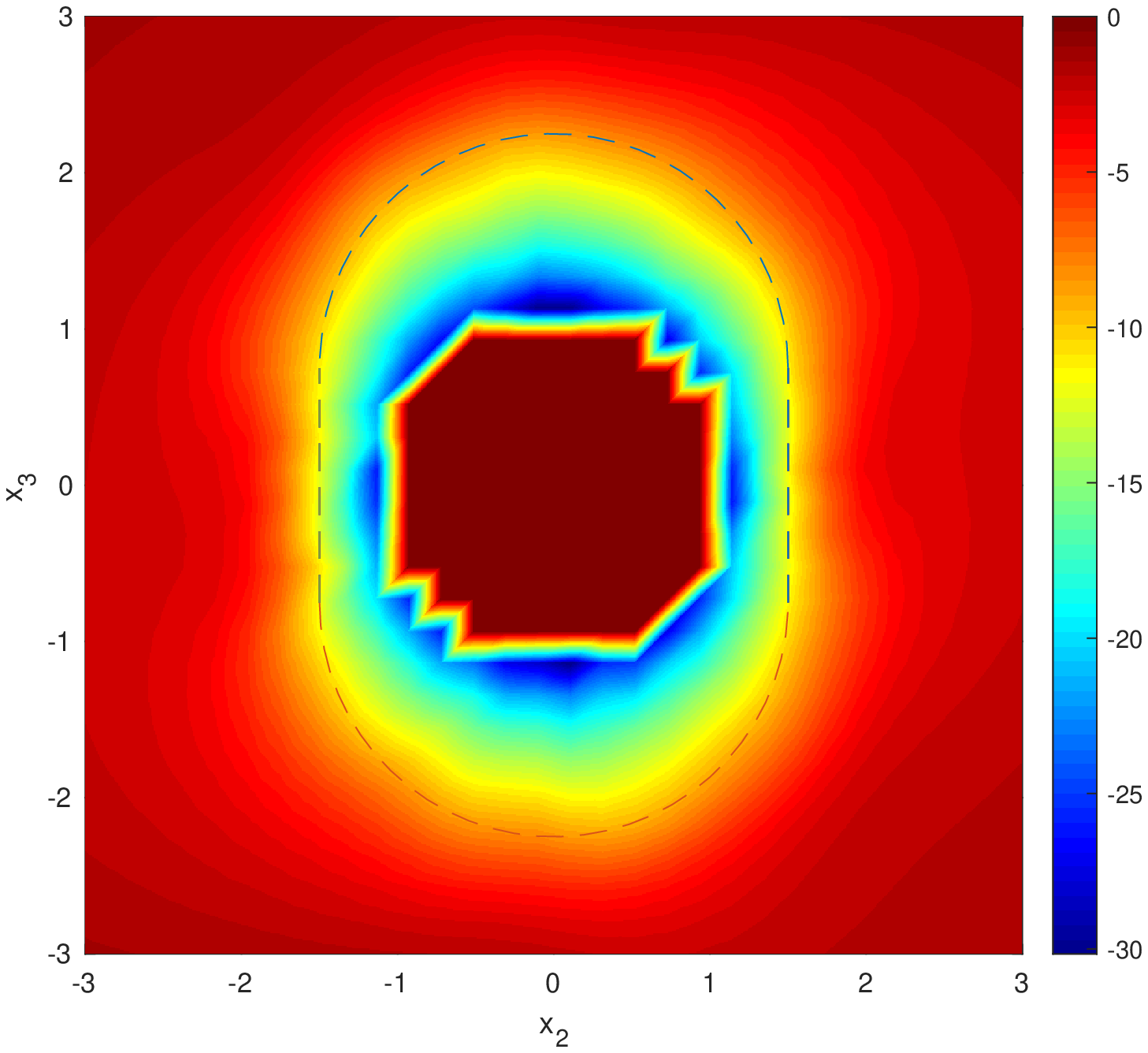}
\includegraphics[width=0.32\linewidth]{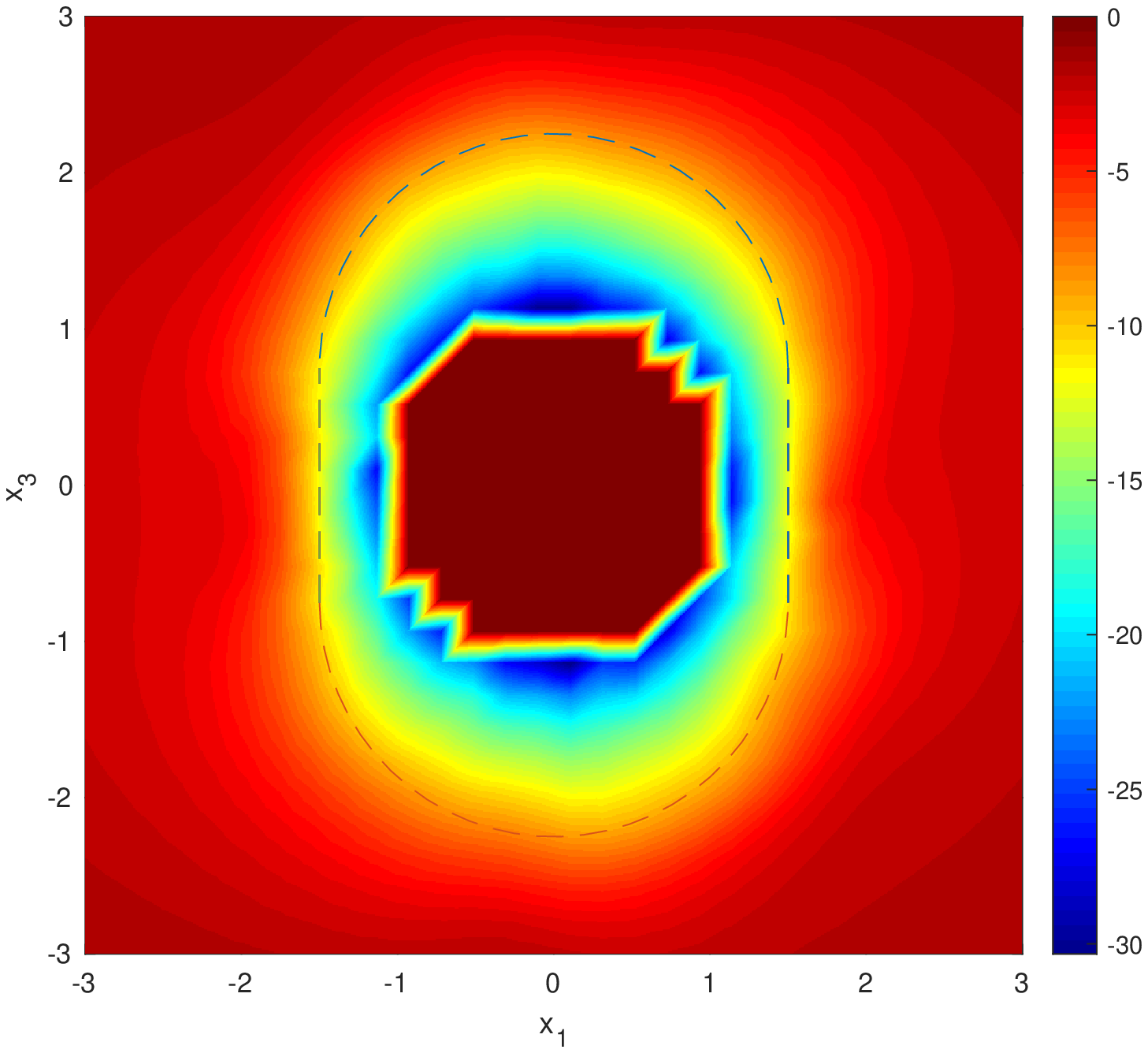}
     \caption{
     \linespread{1}
     Cross-section images of the cylinder-shape  cavity.
     } \label{cyl_cs}
    \end{figure}

In all of our numerical examples, we use the Finite Element software Netgen/NgSolve \cite{schoberl1997netgen} to generate the above synthetic data ${\{\hat{\be}_m(\by_j) \cdot \vecE^s(\bx_i,\by_j,\hat{\be}_\ell(\bx_i)): m,\ell=1,2\}_{i,j=1}^n}$. All the computations were done on a laptop with 8GB RAM. We use linear edge element to discretize the problem. The mesh size is chosen as $0.5$. We apply radial Perfectly Matched Layer (PML) to formulate the problem in a bounded domain. The material coefficients are chosen as $A=\boldsymbol{I}$, and $N=2\boldsymbol{I}$, where $\boldsymbol{I}$ is the $3\times3$ identity matrix. The wave number is chosen as $k=0.75$ in all the numerical examples, unless specified. The polarization vector is chosen as $\vech=(1/\sqrt{3},-1/\sqrt{3},1/\sqrt{3})$.

We further add $2\%$ Gaussian noise to the above synthetic data. To regularize the problem \eqref{lsm_discre2}, we apply Tikhonov regularization with Morozov principle to get the regularized solution $\vecg_{\bz}^\epsilon$, where $\epsilon$ is related to the noise level, see for instance \cite{CaCoMo}.

Our imaging function is then defined by
\begin{equation}
I(\bz): = \frac{1/\|\vecg_{\bz}^\epsilon\|}{\max_{\bz}1/\|\vecg_{\bz}^\epsilon\|}
\end{equation}
Indicated by our Theorem \ref{MaxwellLSM}, our imaging function is large for $\bz \not\in D$, while it is small for $\bz \in D \backslash \Sigma$. For best visualization, we plot the log scale of the imaging function over a mesh grid with grid distance $0.1$. Indicated by our Theorem \ref{MaxwellLSM}, we only need to sample the region outside the measurement ball $C$. Therefore in our numerical examples, we manually set the values (in log scale) of the imaging function inside the measurement ball $C$ to be $0$.

            \begin{figure}[hb!]
            \includegraphics[width=0.32\linewidth]{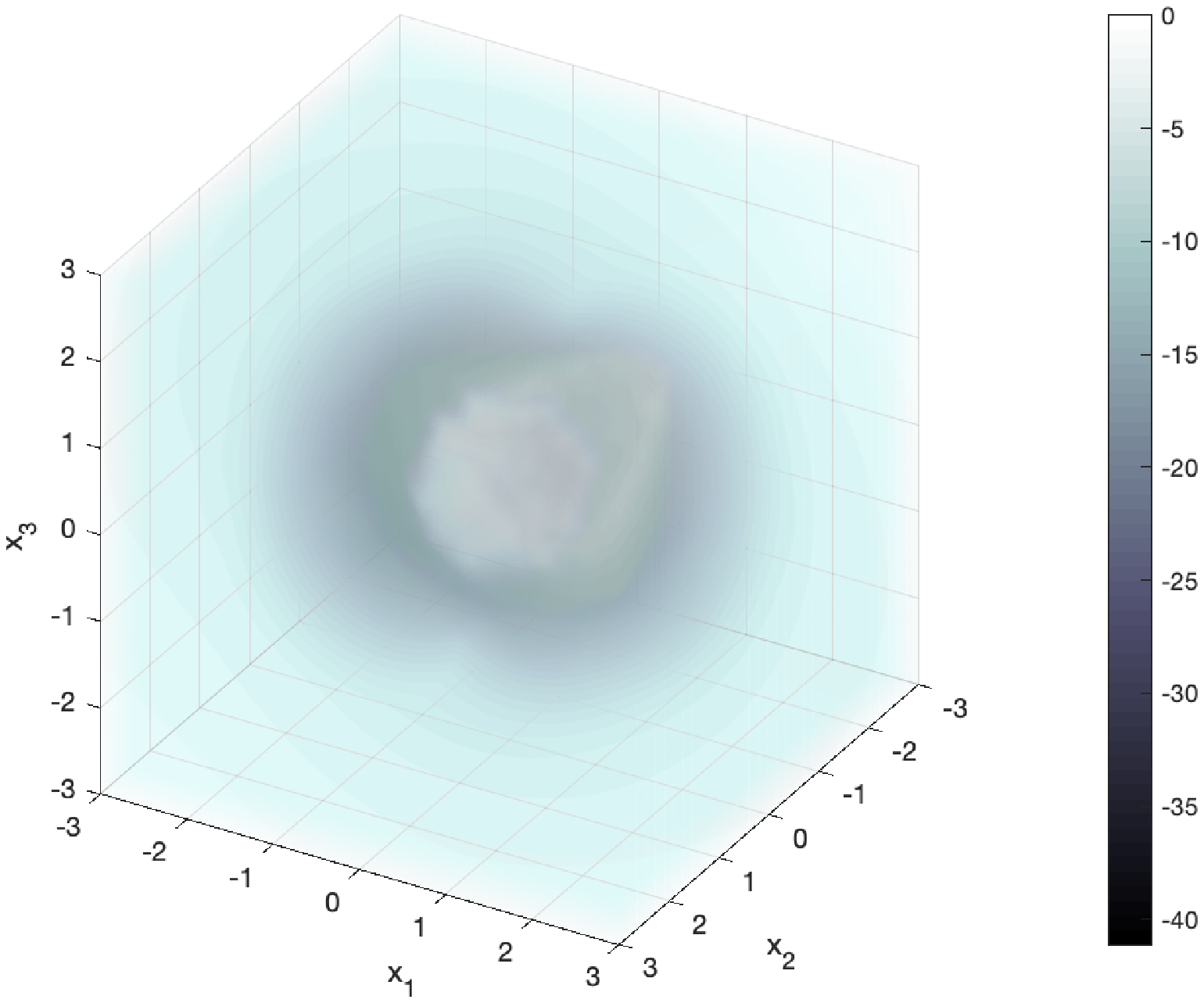}
\includegraphics[width=0.32\linewidth]{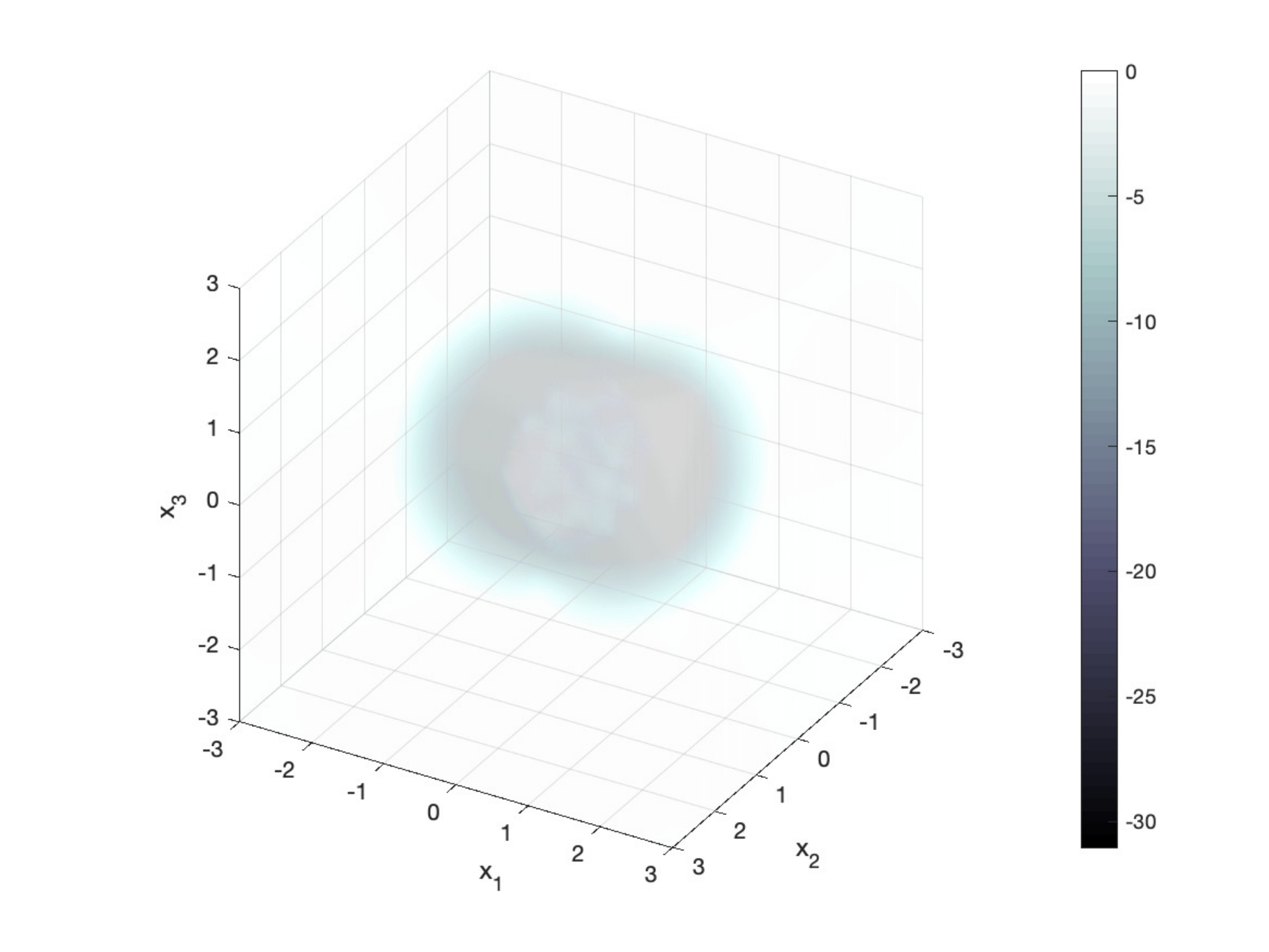}
\includegraphics[width=0.32\linewidth]{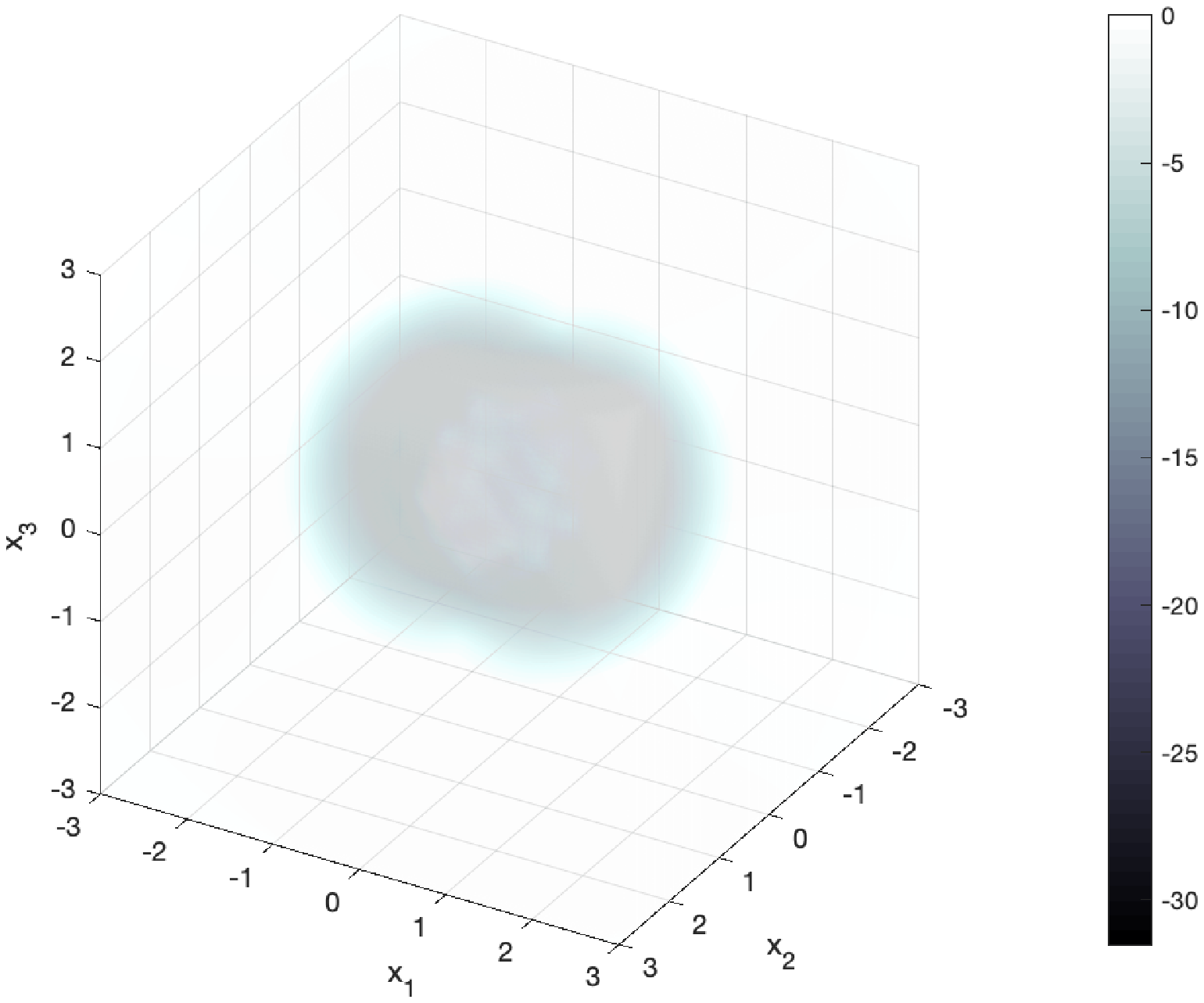}

\includegraphics[width=0.32\linewidth]{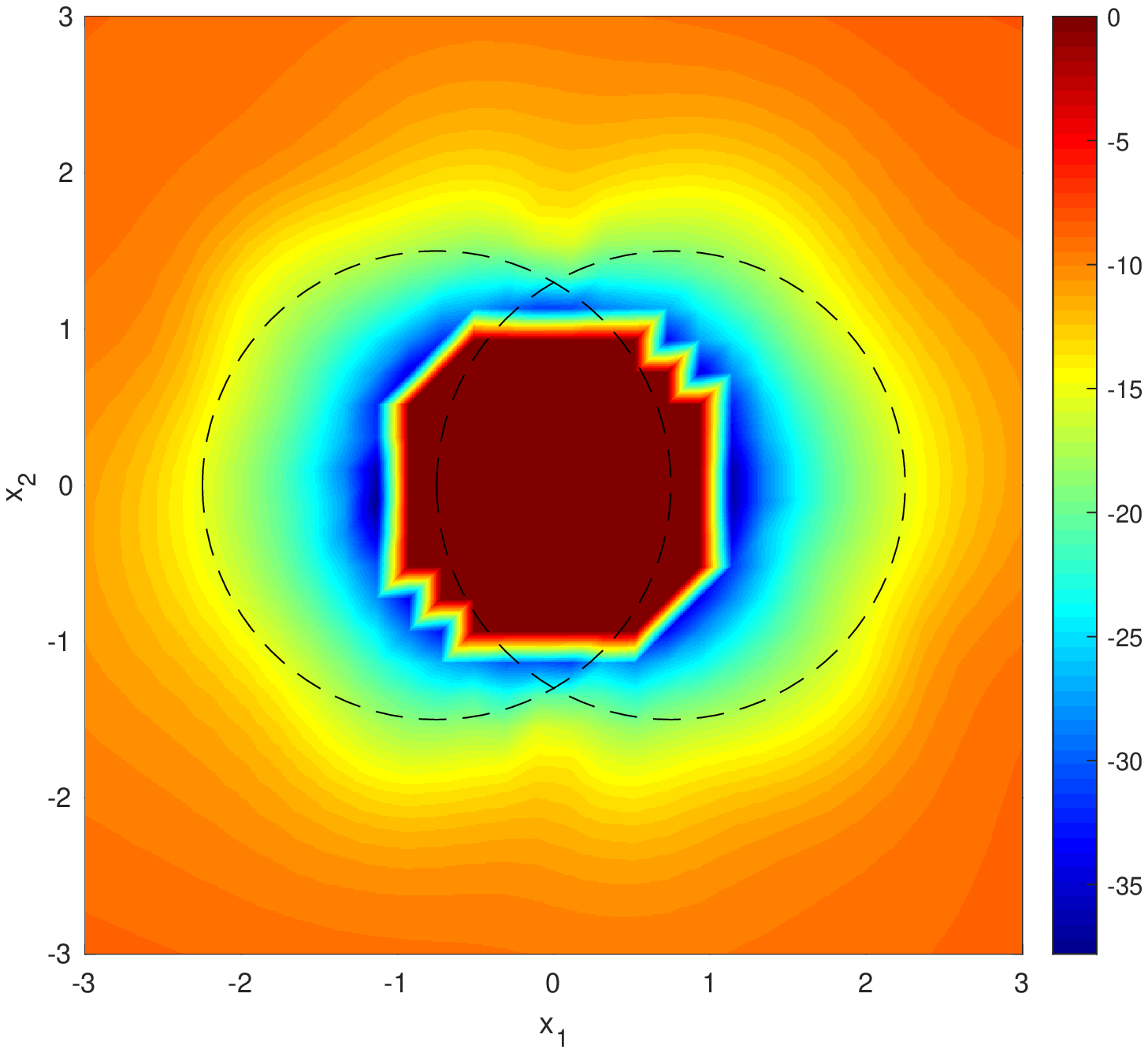}
   \includegraphics[width=0.32\linewidth]{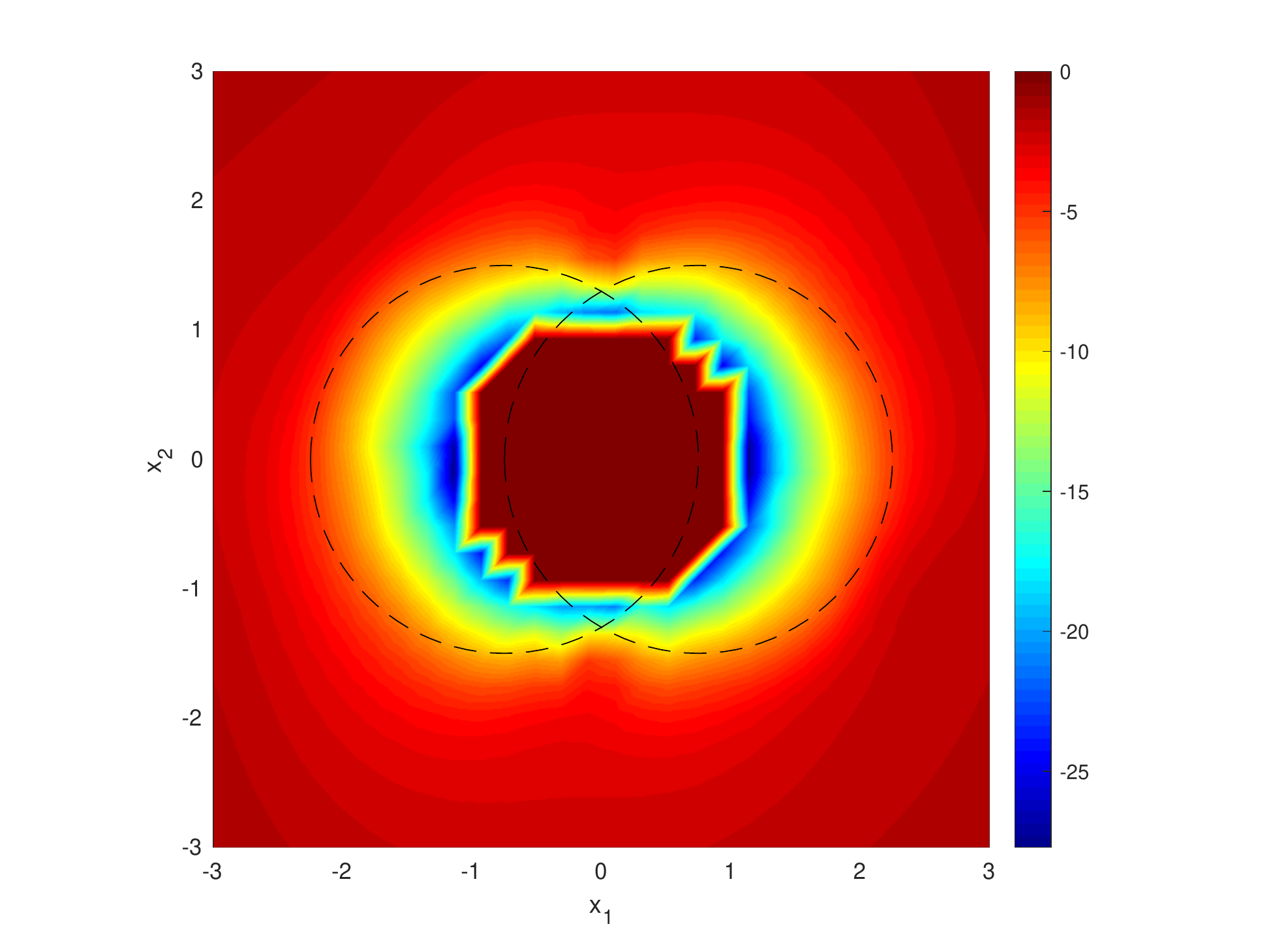}
\includegraphics[width=0.32\linewidth]{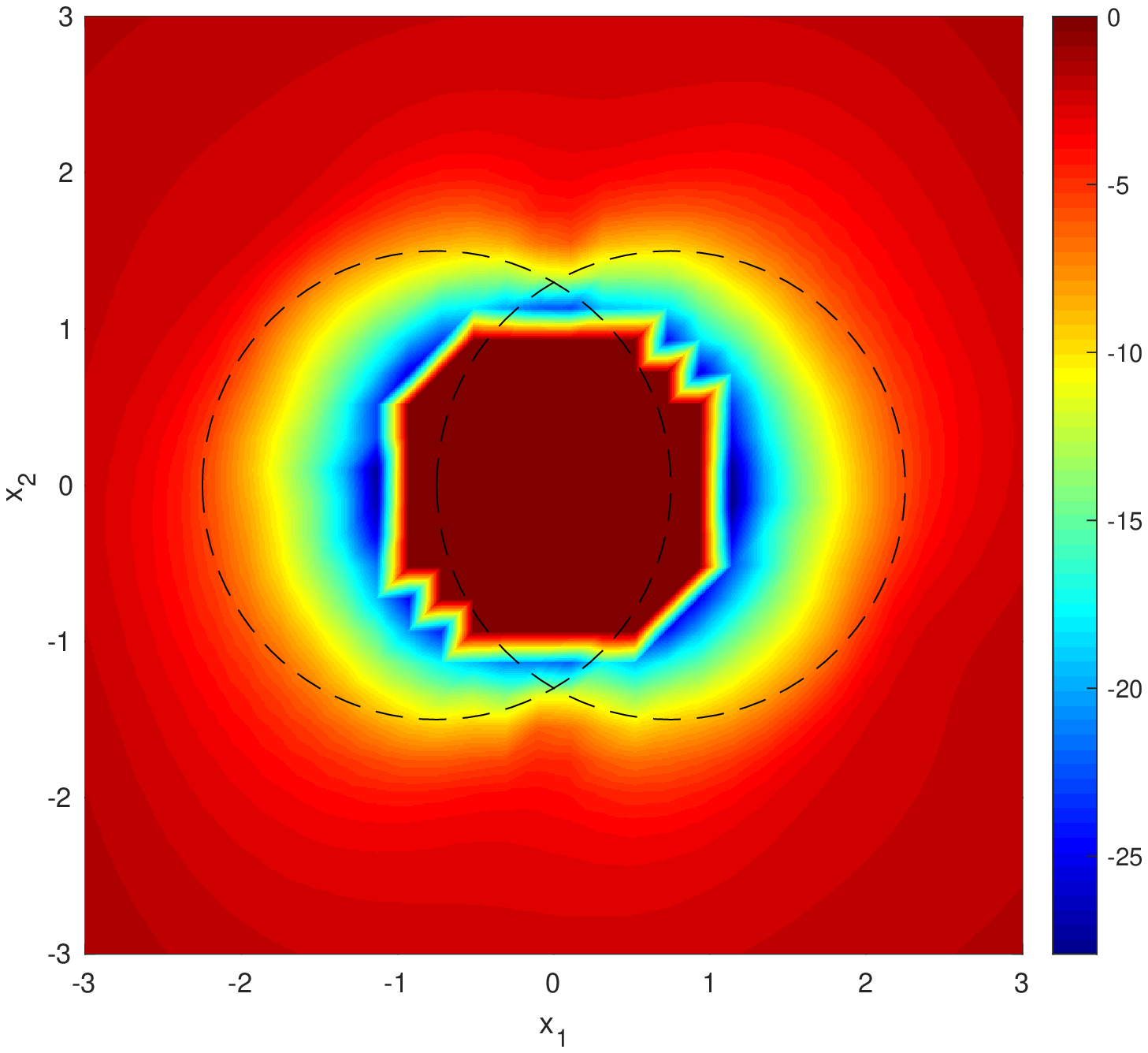}

\includegraphics[width=0.32\linewidth]{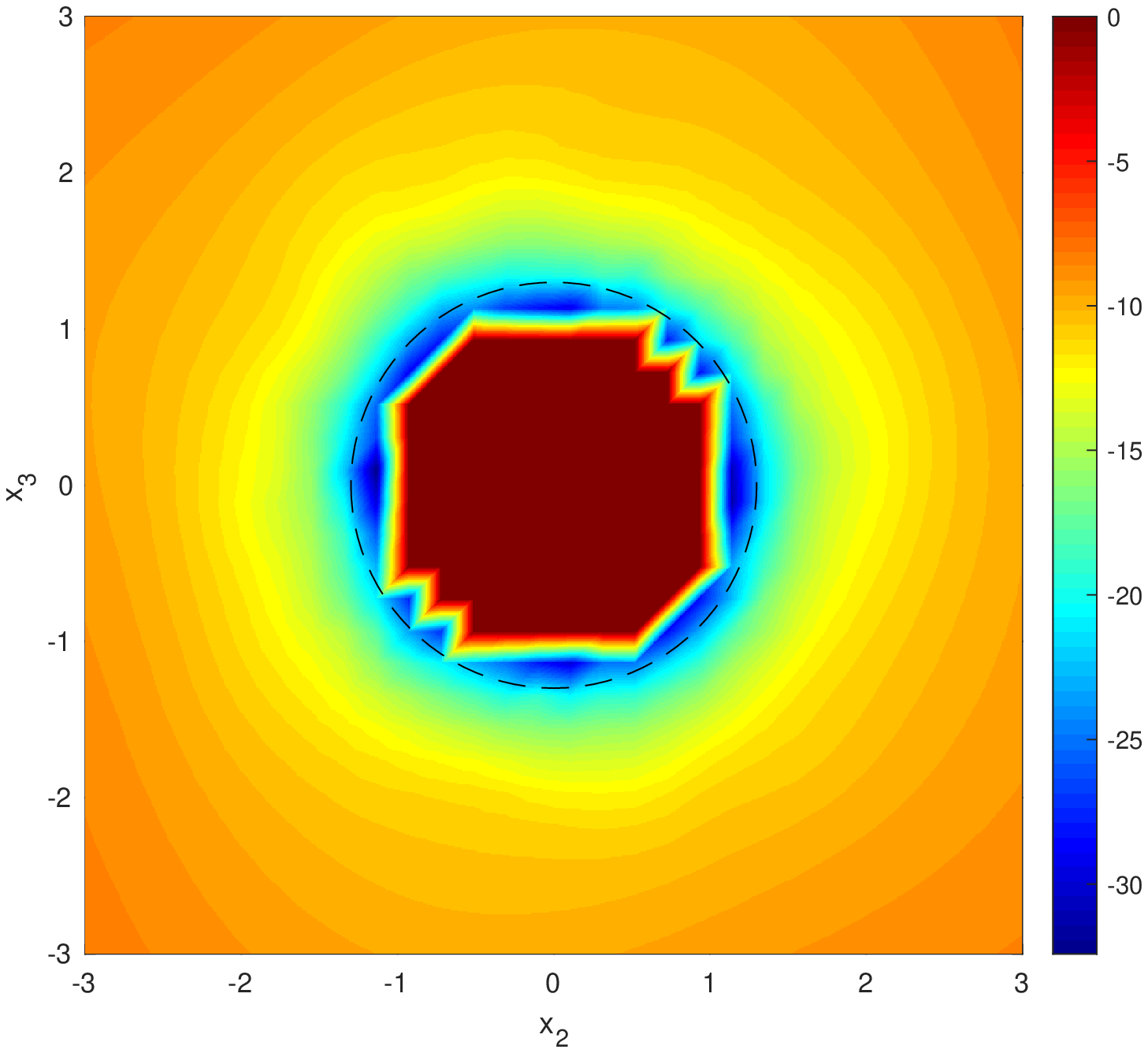}
   \includegraphics[width=0.32\linewidth]{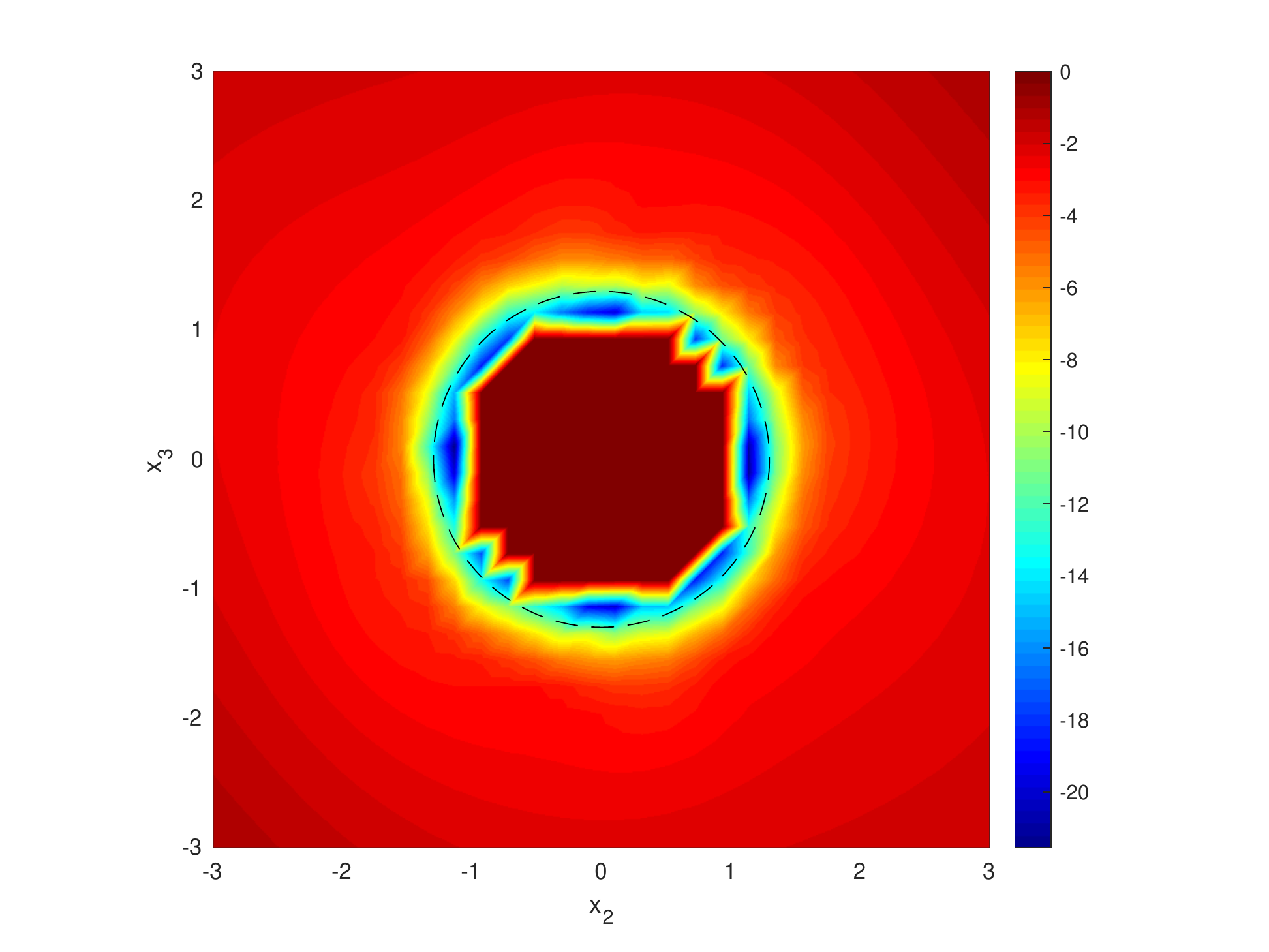}
\includegraphics[width=0.32\linewidth]{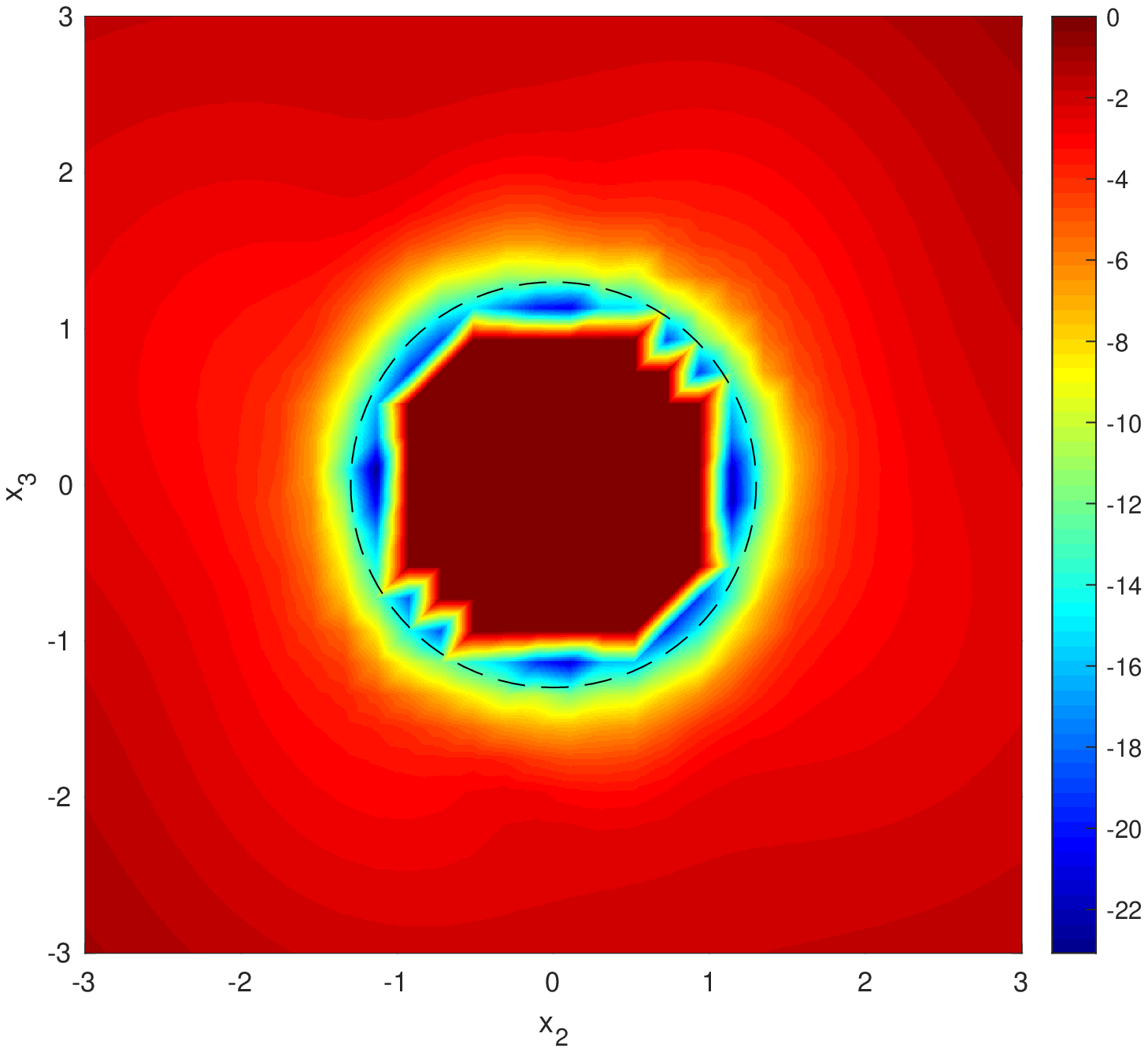}

\includegraphics[width=0.32\linewidth]{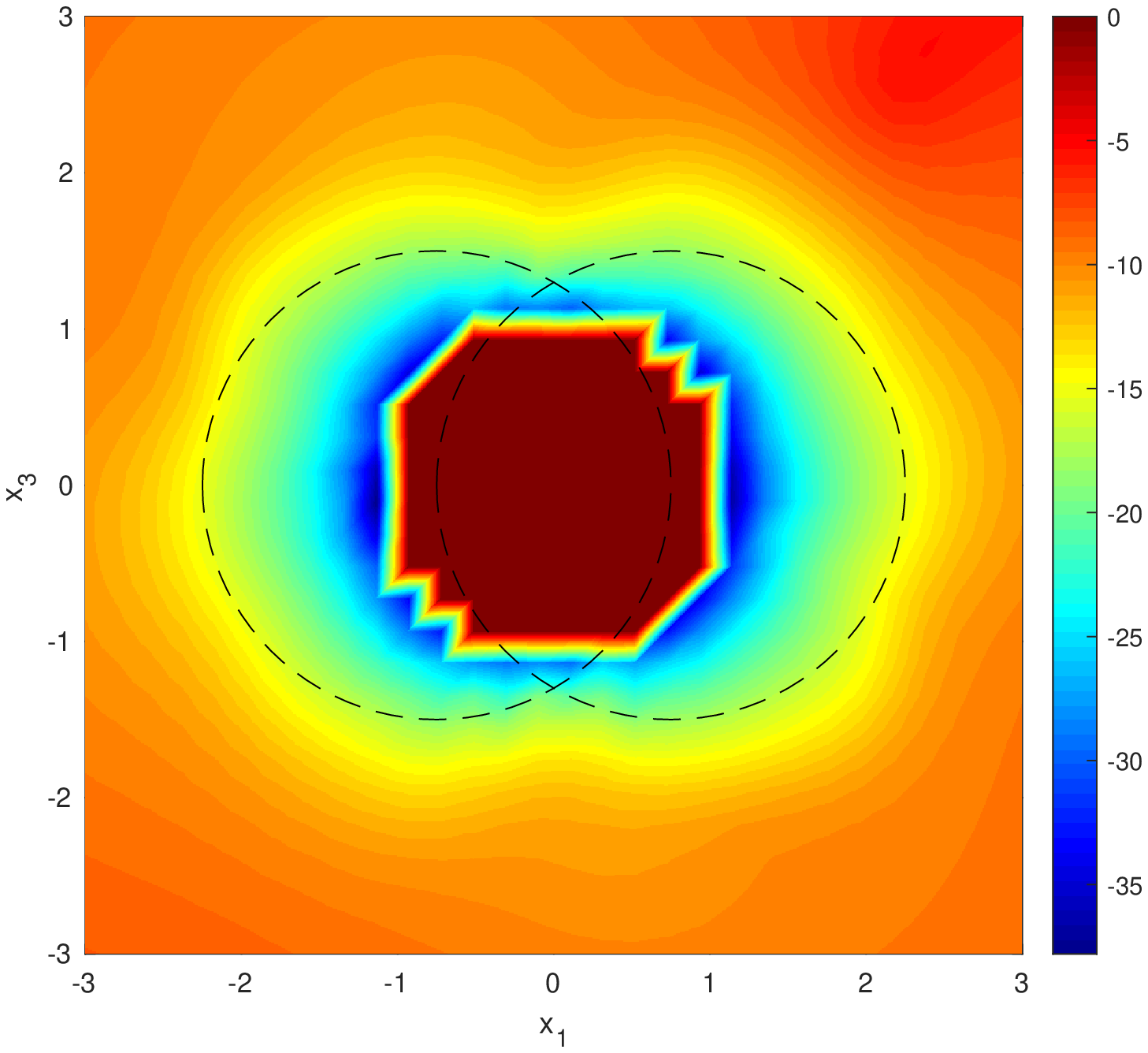}
   \includegraphics[width=0.32\linewidth]{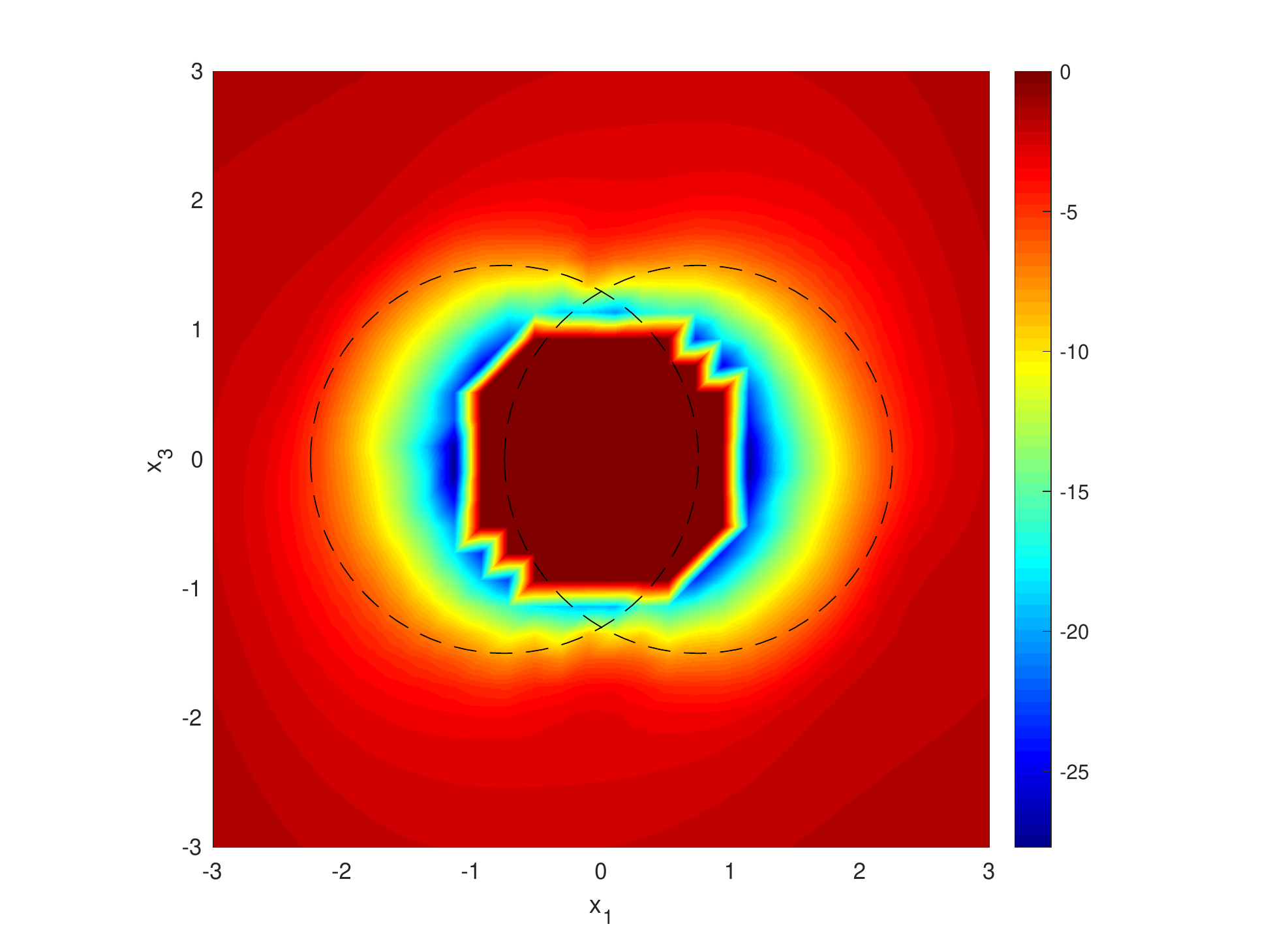}
\includegraphics[width=0.32\linewidth]{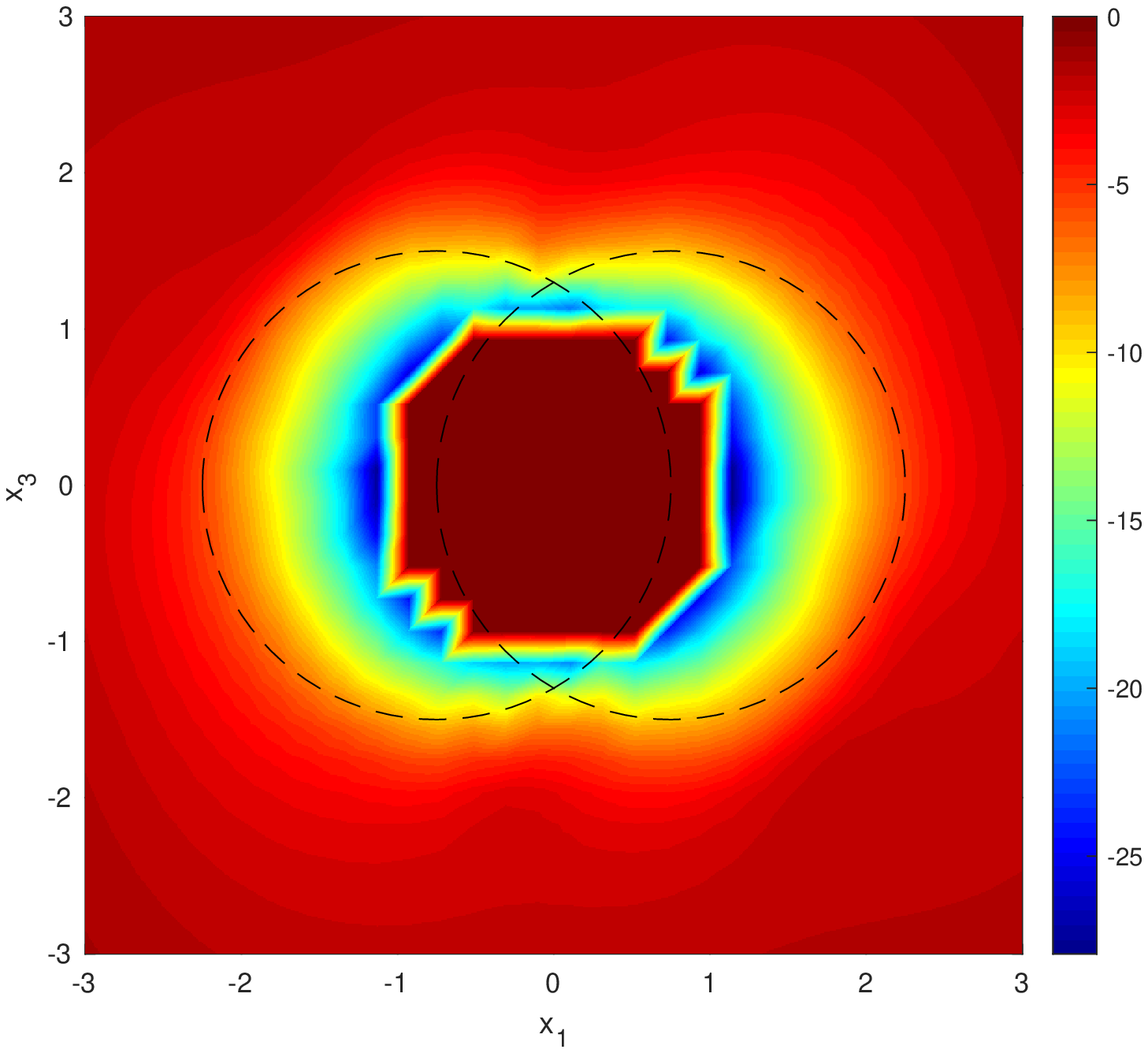}
     \caption{
Images of a peanut-shape cavity. Left column: $k=0.5$. Middle column: $k=0.75$. Right column: $k=1$. First row: three dimensional images. Second row: $x_1 x_2-$cross section images. Third row: $x_2x_3-$cross section images. Last row: $x_1x_3-$cross section images.
     } \label{peanut_wavenums}
    \end{figure}

For the first numerical example, the cavity $D$ is a ball centered at origin with radius $1.5$ defined by
$$
\{ \bx=(x_1,x_2,x_3): \sqrt{x_1^2 + x_2^2+ x_3^2} < 1.5\}.
$$
Fig. \ref{ball} displays the three dimensional image, and Fig. \ref{ball_cs} displays the cross-section images where the cross-section of the exact ball is indicated by the dashed line.

The second numerical example is for  a peanut-shape cavity $D$ defined by
$$
\{ \bx=(x_1,x_2,x_3): \sqrt{(x_1-0.75)^2 + x_2^2+ x_3^2} < 1.5 \mbox{ or } \sqrt{(x_1+0.75)^2 + x_2^2+ x_3^2} < 1.5\}.
$$
Fig. \ref{peanut} displays the three dimensional image, and Fig. \ref{peanut_cs} displays the cross-section images where the cross-section of the exact  peanut-shape cavity is indicated by the dashed line.

The third numerical example is for a cylinder-shape cavity $D$ defined by
$$
\Bigg\{ \bx=(x_1,x_2,x_3):
\begin{array}{ccc}
\sqrt{x_1^2 + x_2^2} <1.5  &  \mbox{ for } & |x_3| < 0.75  \\
\sqrt{x_1^2 + x_2^2 + (x_3-0.75)^2} <1.5  & \mbox{ for }  &  0.75 < x_3 < 2.25\\
\sqrt{x_1^2 + x_2^2 + (x_3+0.75)^2} <1.5  &  \mbox{ for }  &   -2.25 < x_3 <  -0.75
\end{array}\Bigg\}.
$$
Fig. \ref{cyl} displays the three dimensional image, and Fig. \ref{cyl_cs} displays the cross-section images where the cross-section of the exact cylinder-shape cavity is indicated by the dashed line. So far, our algorithm gives good images for the shape of the ball, the peanut, and the cylinder. We can clearly distinguish those different cavities.

We further illustrate the performance of the linear sampling method for different wave numbers $k\in \{0.5, 0.75, 1\}$ in Fig. \ref{peanut_wavenums}. Here the exact geometry is a peanut-shape cavity as in Fig. \ref{peanut}. 

{Finally we compare the numerical reconstructions of a cuboid cavity using a single polarization vector and three linearly independent polarization vectors (similar comparisons were reported in  \cite{haddar2002linear} for the typical scattering case). The cuboid is given by
$$
\{ \bx=(x_1,x_2,x_3): |x_1| < 1.5, |x_2| < 1.5, |x_3| < 2 \}.
$$
For the best visualization and comparison, we use iso-surface plot for all of the images in Fig. \ref{cuboid}. The second, third, and last columns are reconstructions using a single polarization vector, three polarization vectors, and three polarization vectors with noiseless data respectively. We observe that the cuboid can be (slightly) better imaged using three polarization vectors $(1,0,0)$, $(0,1,0)$, $(0,0,1)$. 
We finally remark that the rounded corners may be due to the regularization.}

\begin{figure}[hb!]
\includegraphics[width=0.24\linewidth]{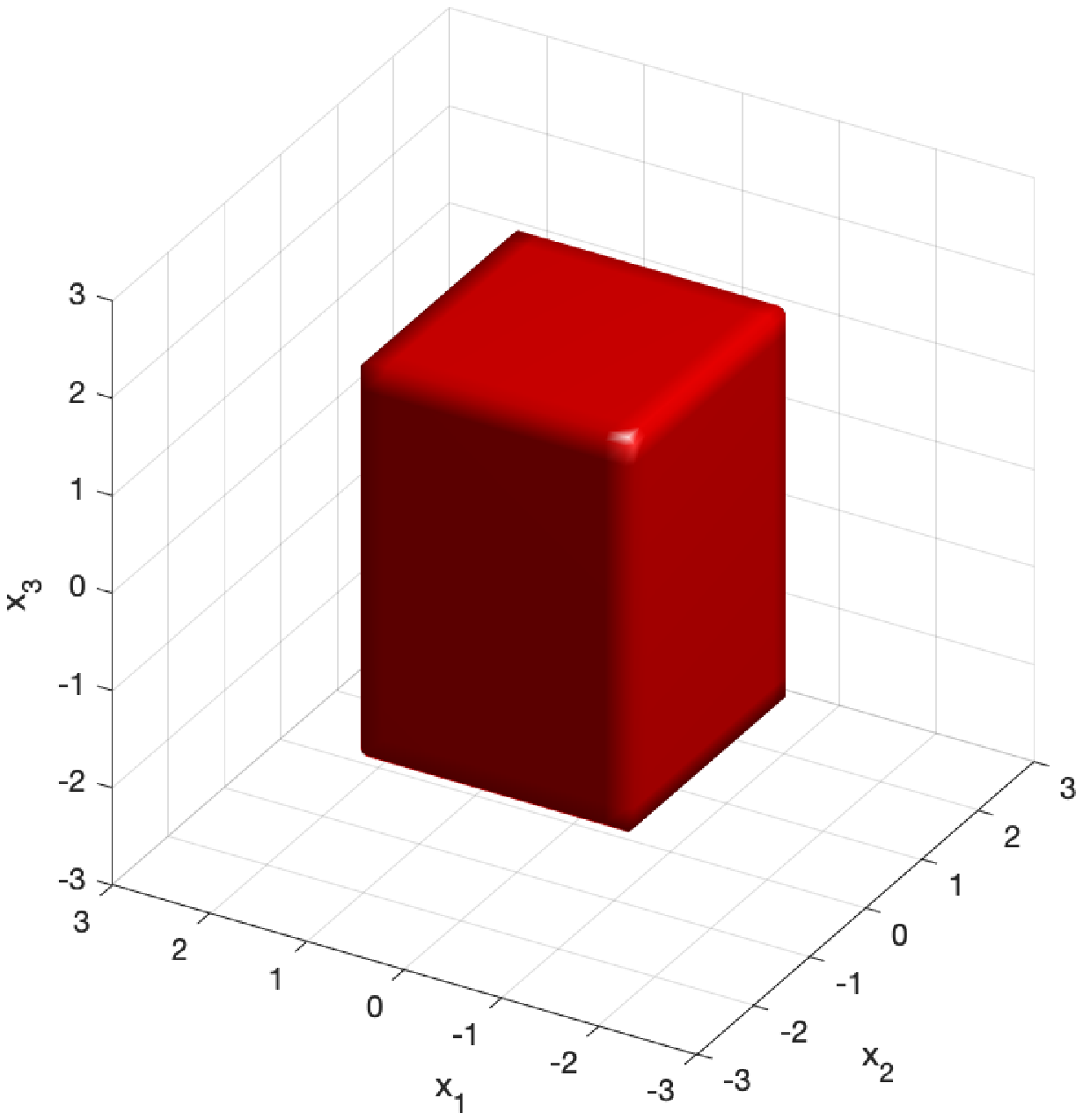}\includegraphics[width=0.24\linewidth]{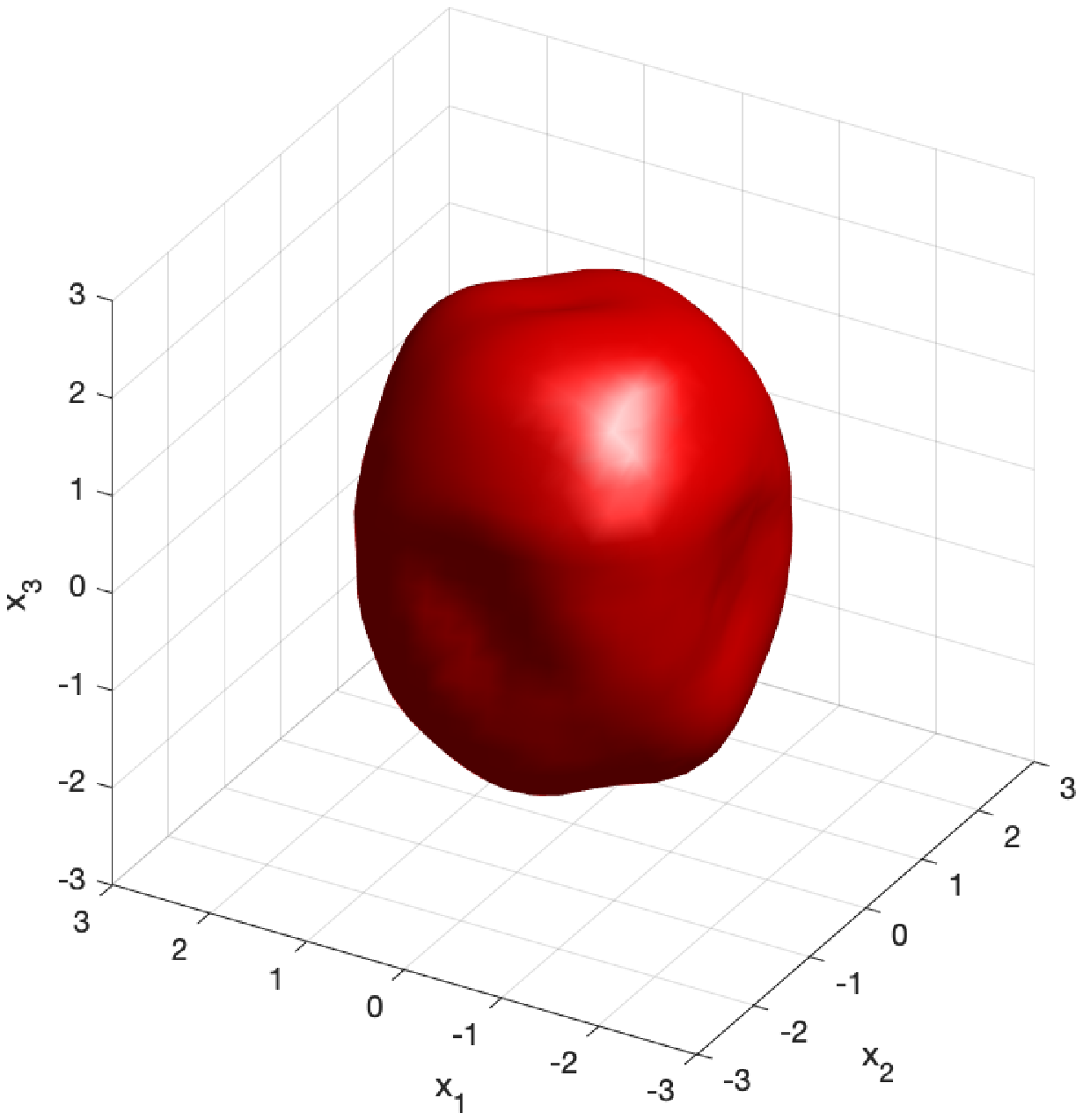}
\includegraphics[width=0.24\linewidth]{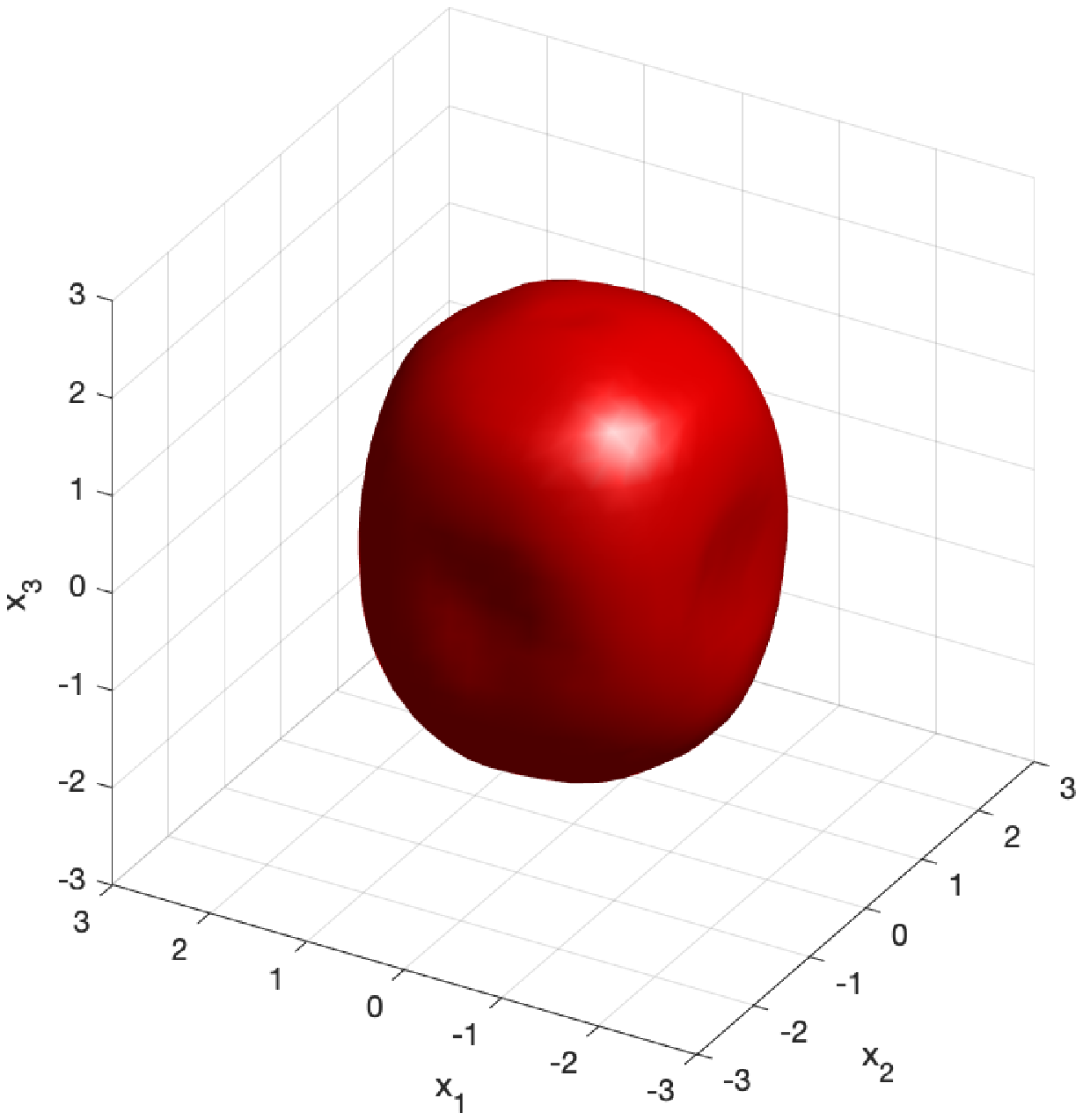}
\includegraphics[width=0.24\linewidth]{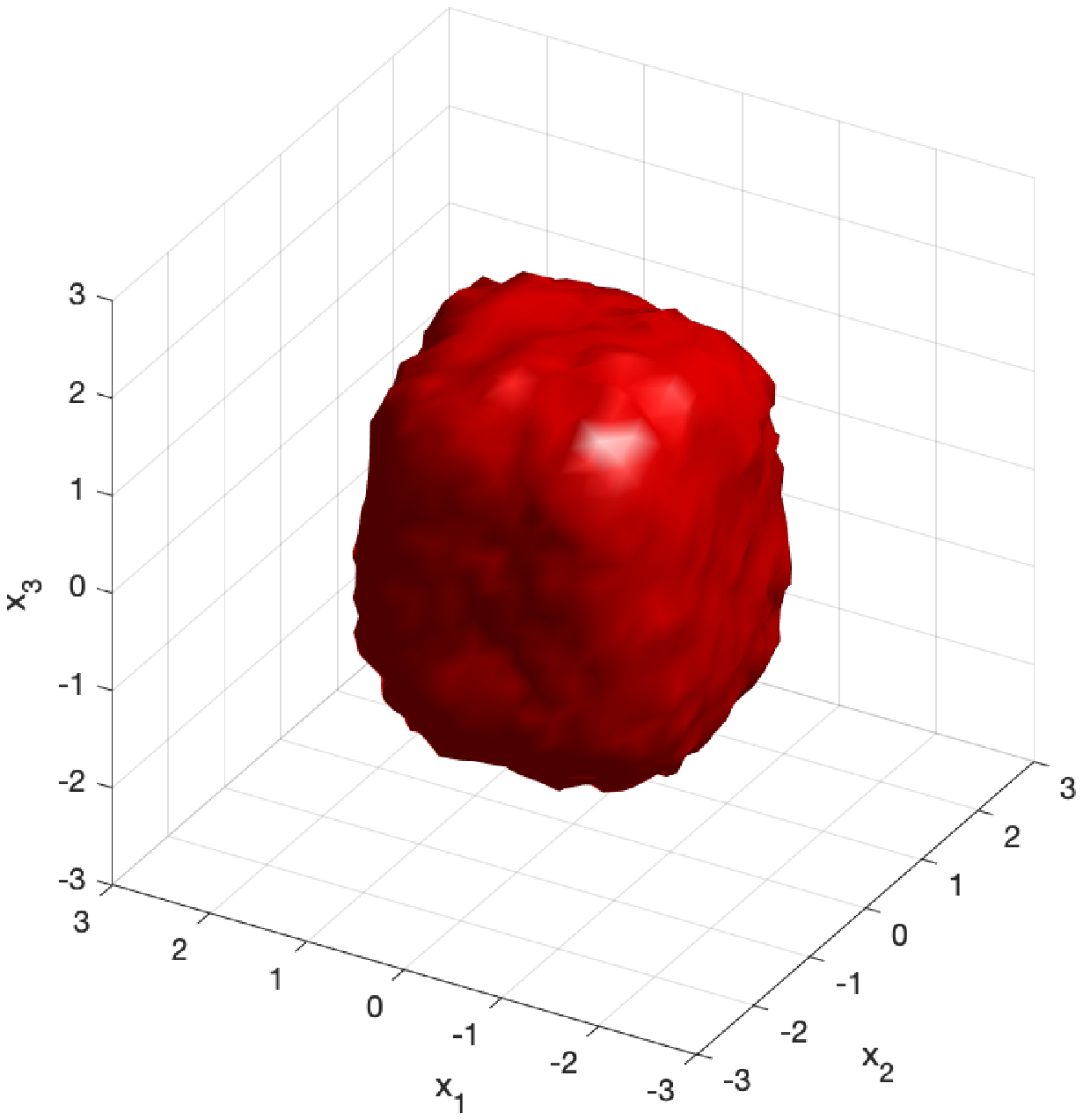}
            
\includegraphics[width=0.24\linewidth]{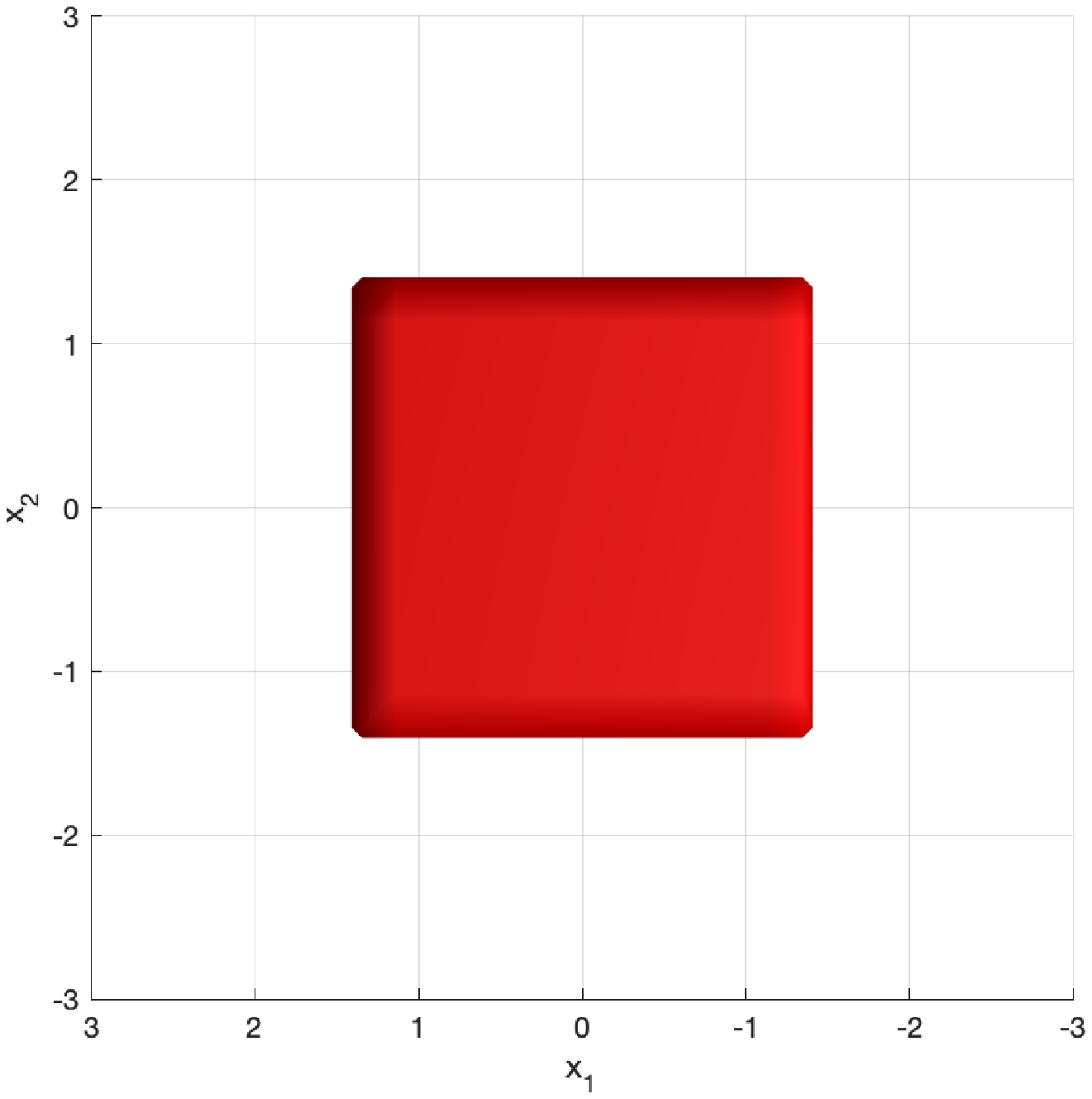}
\includegraphics[width=0.24\linewidth]{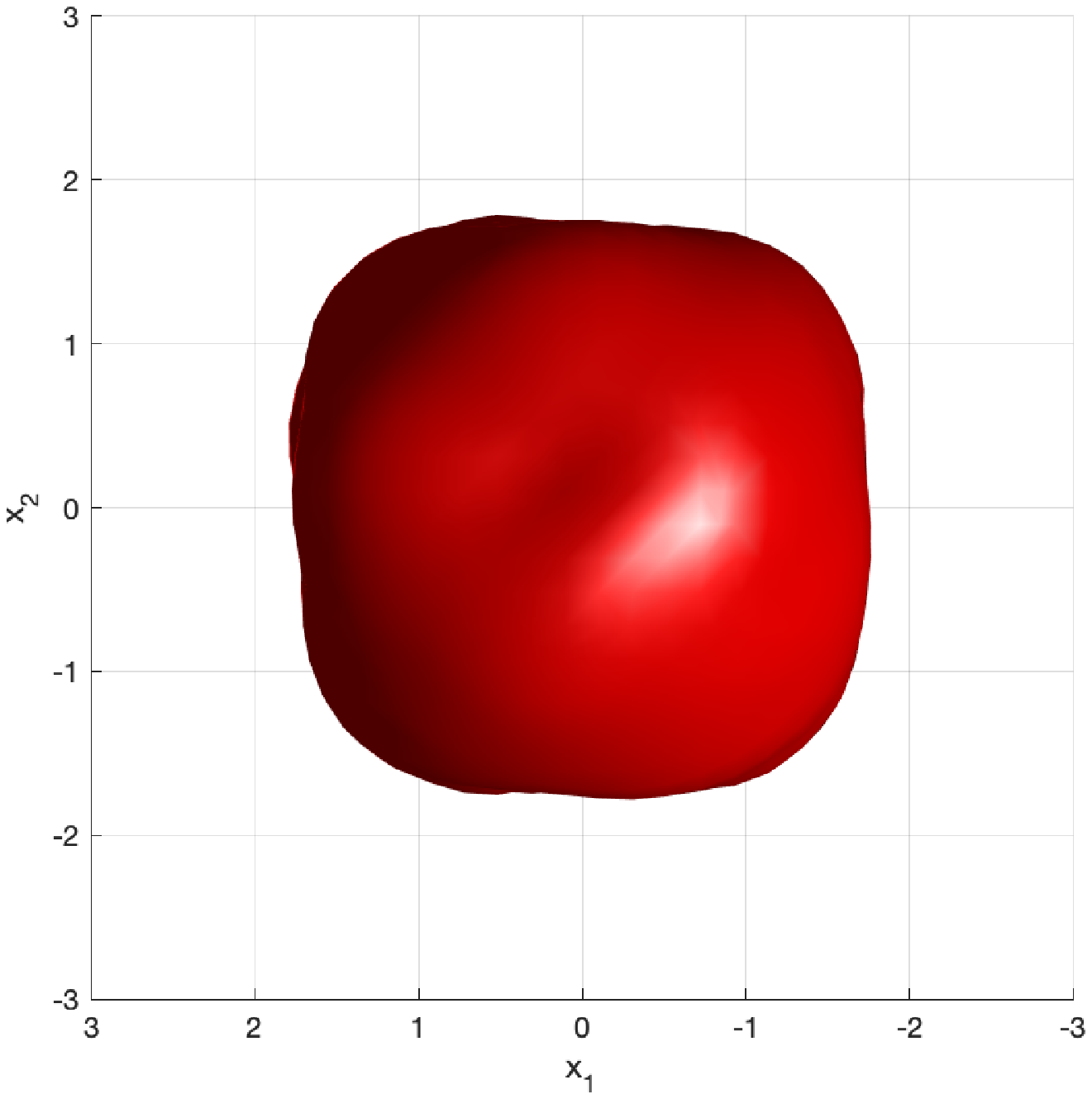}
\includegraphics[width=0.24\linewidth]{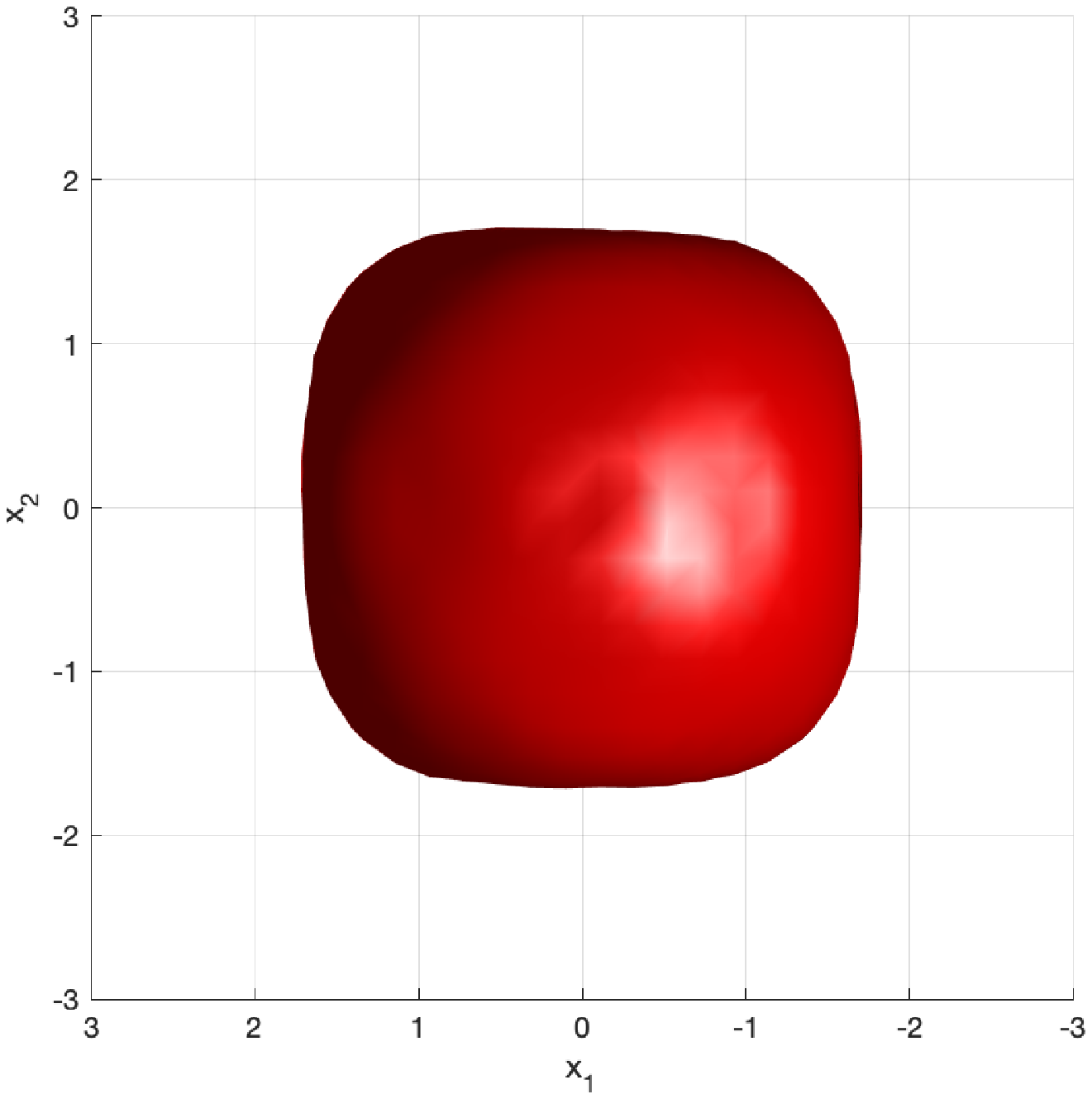}
\includegraphics[width=0.24\linewidth]{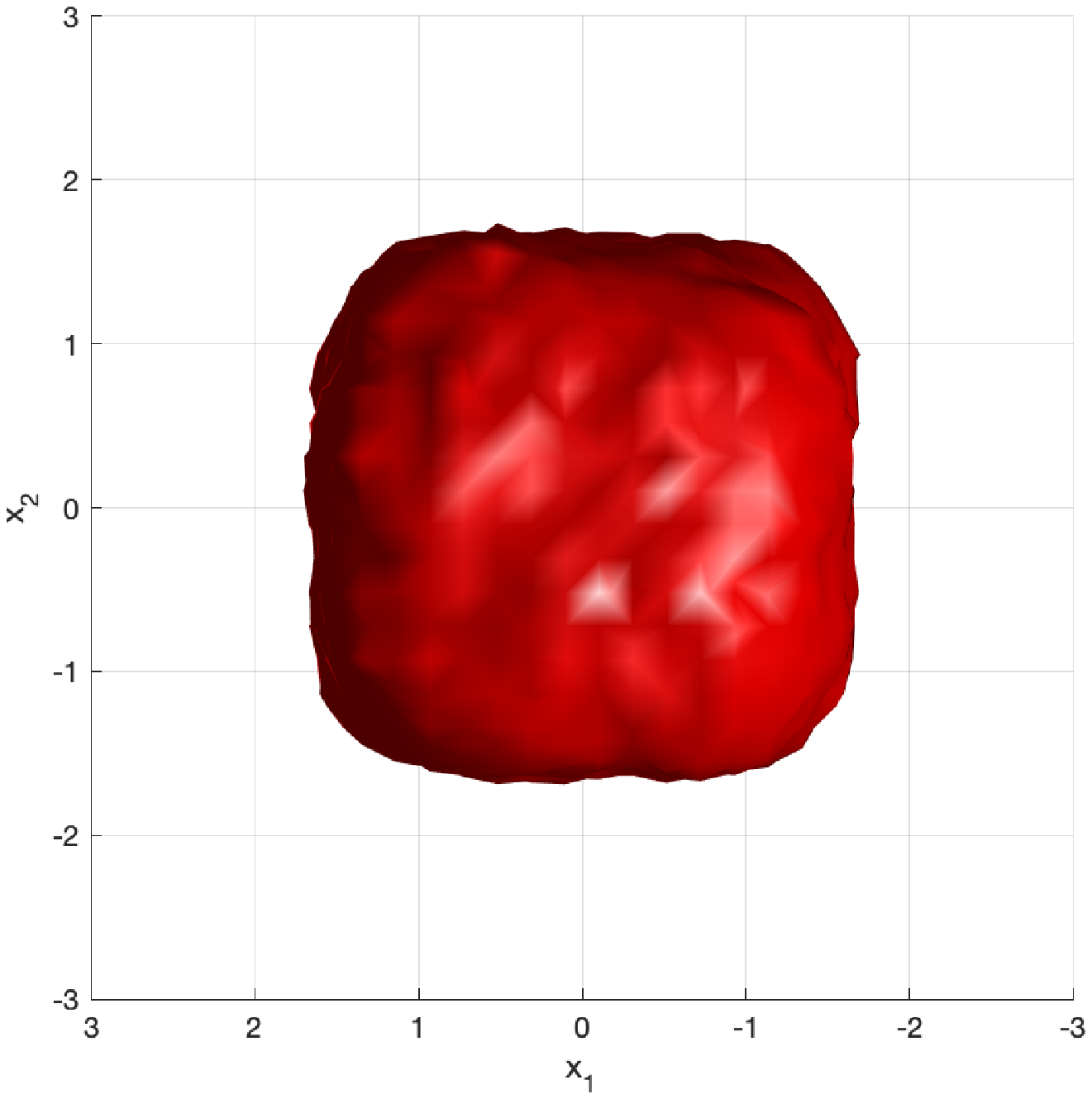}

\includegraphics[width=0.24\linewidth]{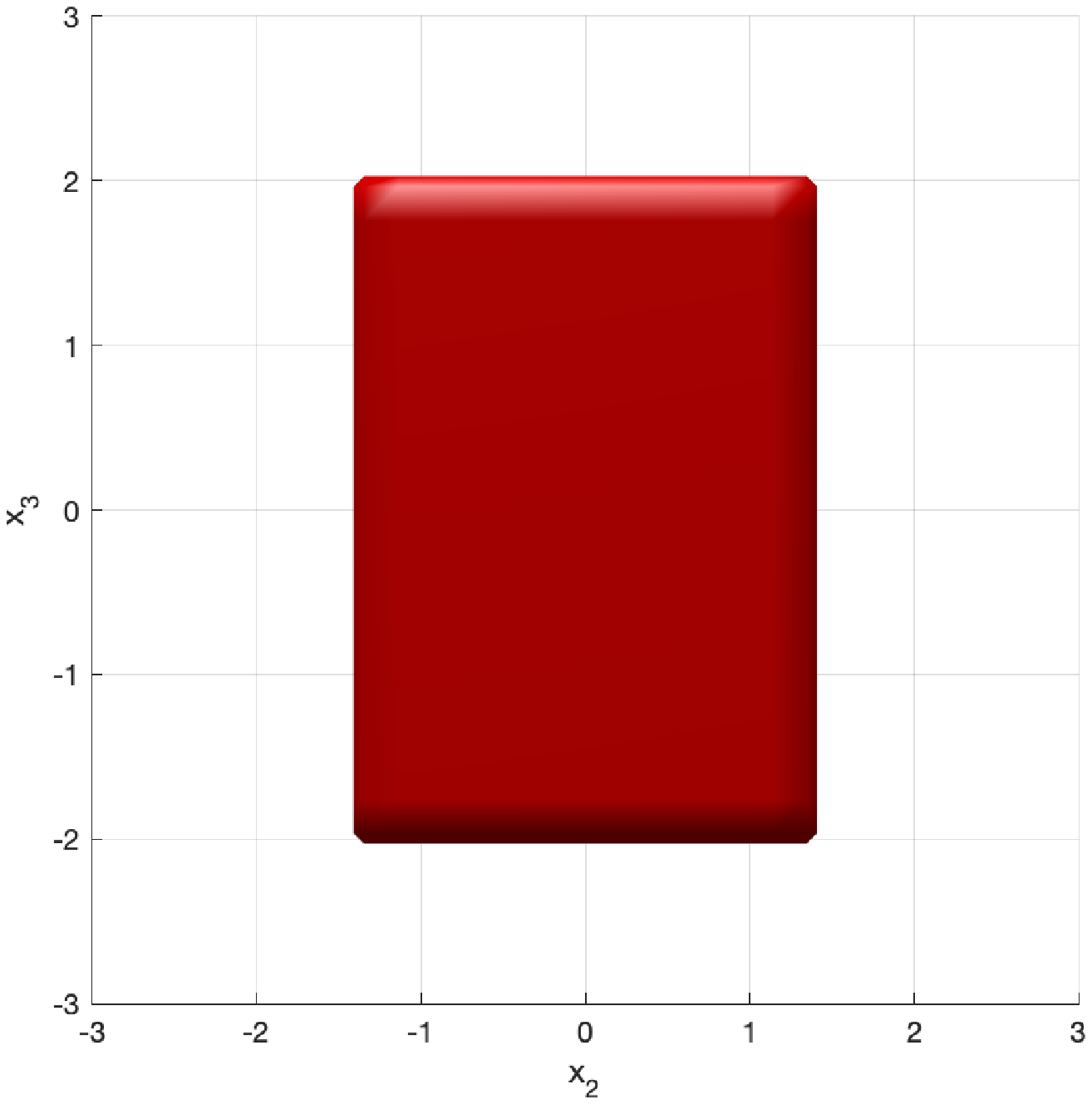}
\includegraphics[width=0.24\linewidth]{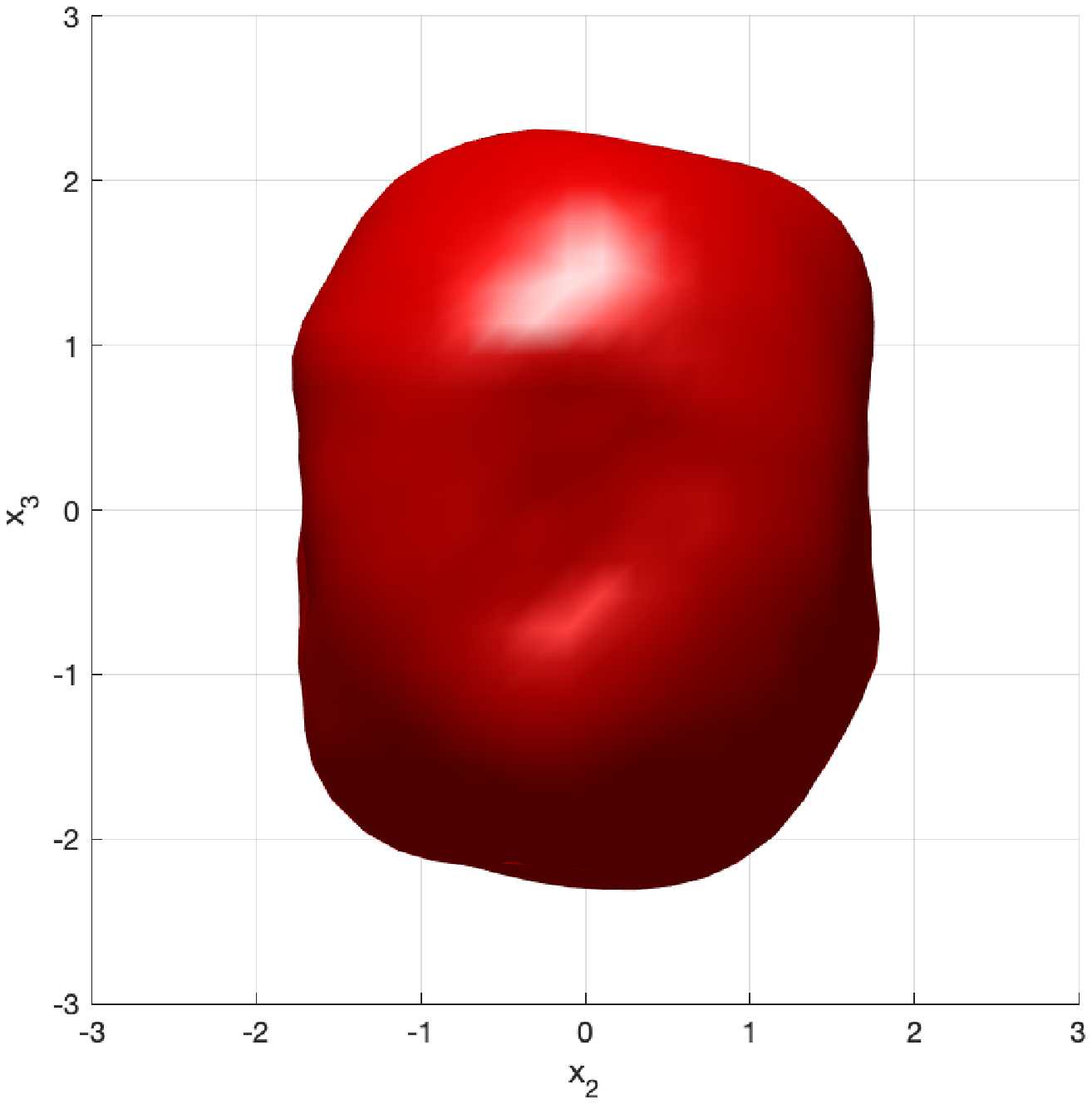}
\includegraphics[width=0.24\linewidth]{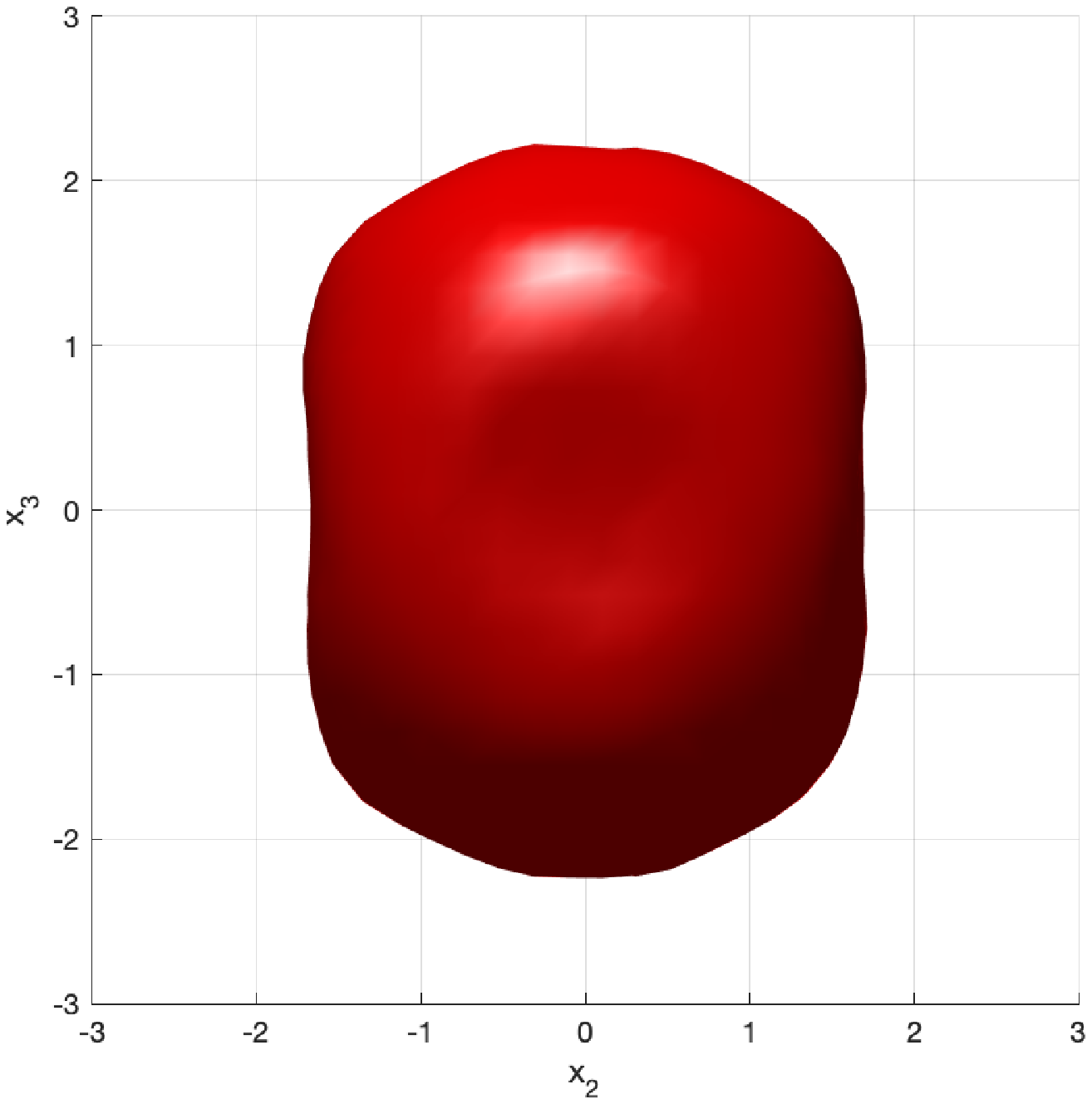}
\includegraphics[width=0.24\linewidth]{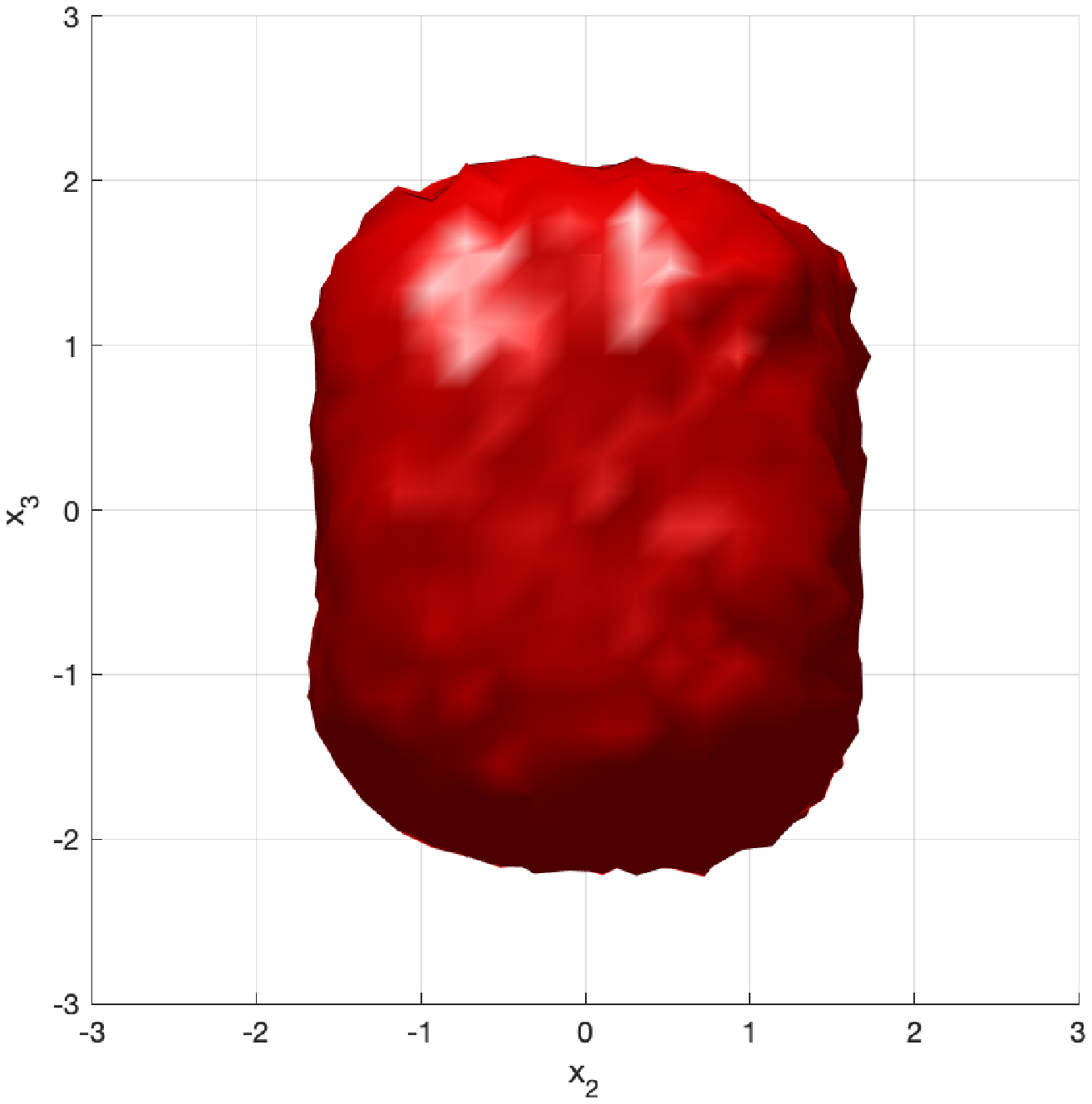}

\includegraphics[width=0.24\linewidth]{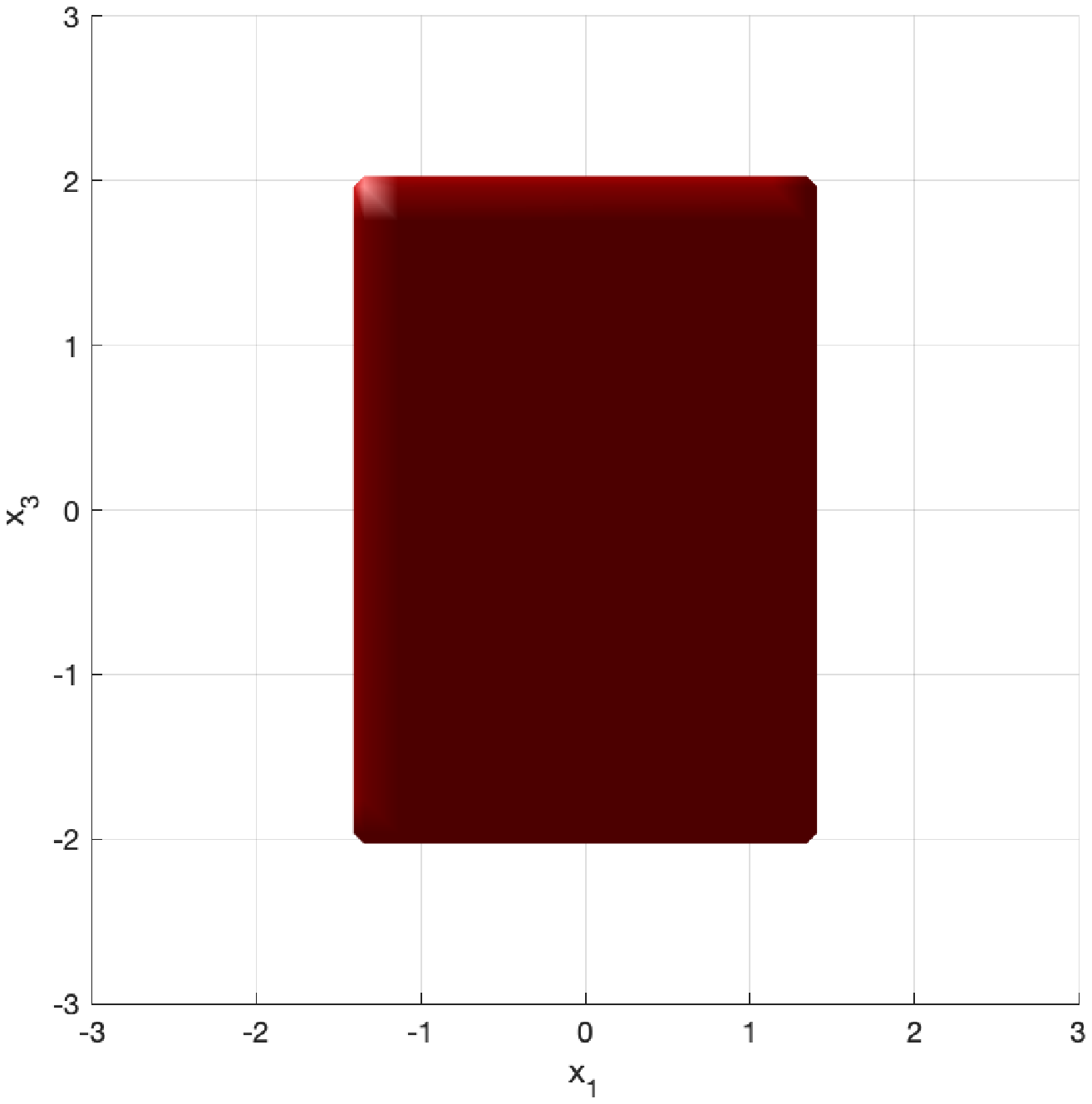}
\includegraphics[width=0.24\linewidth]{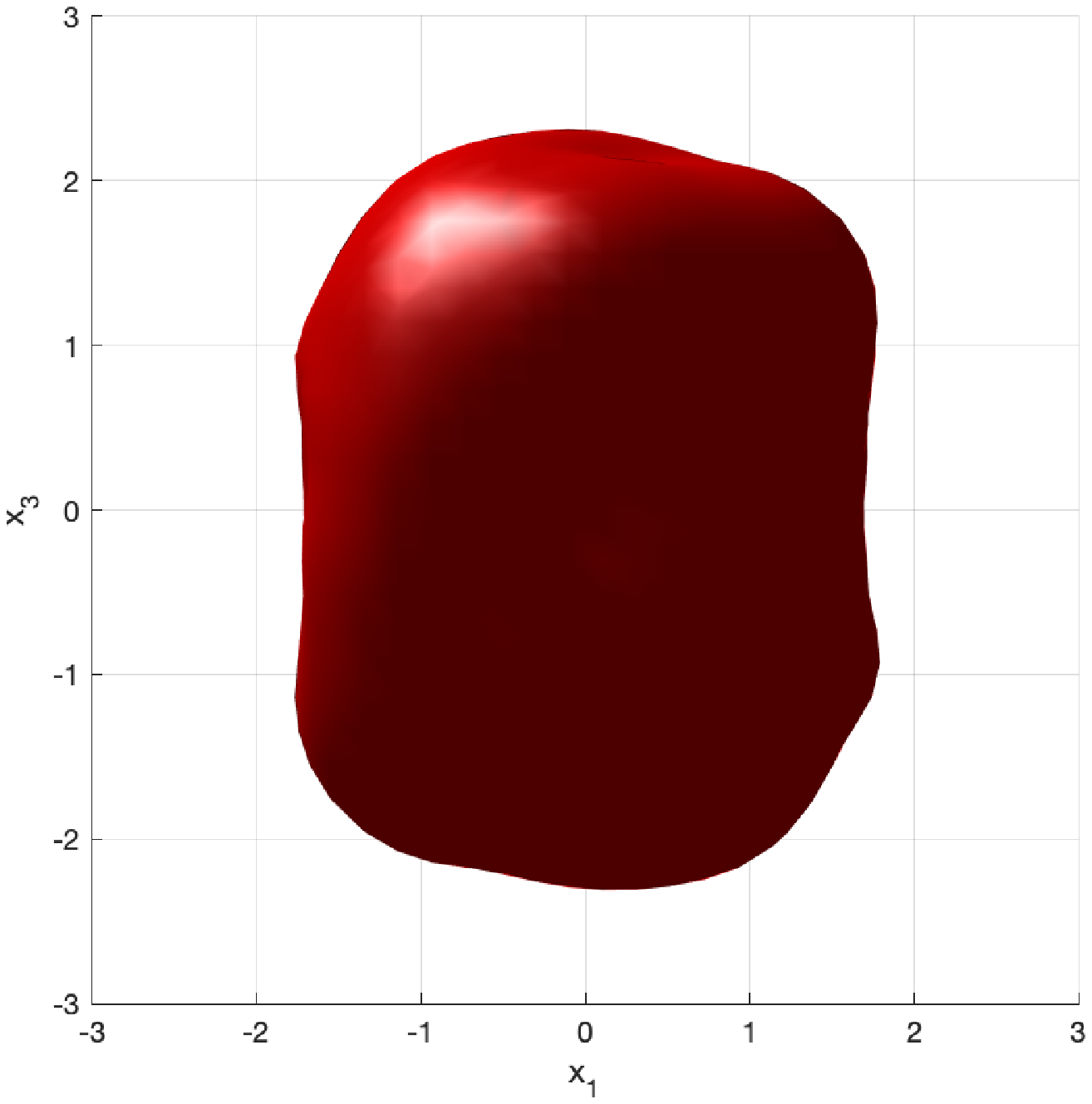}
\includegraphics[width=0.24\linewidth]{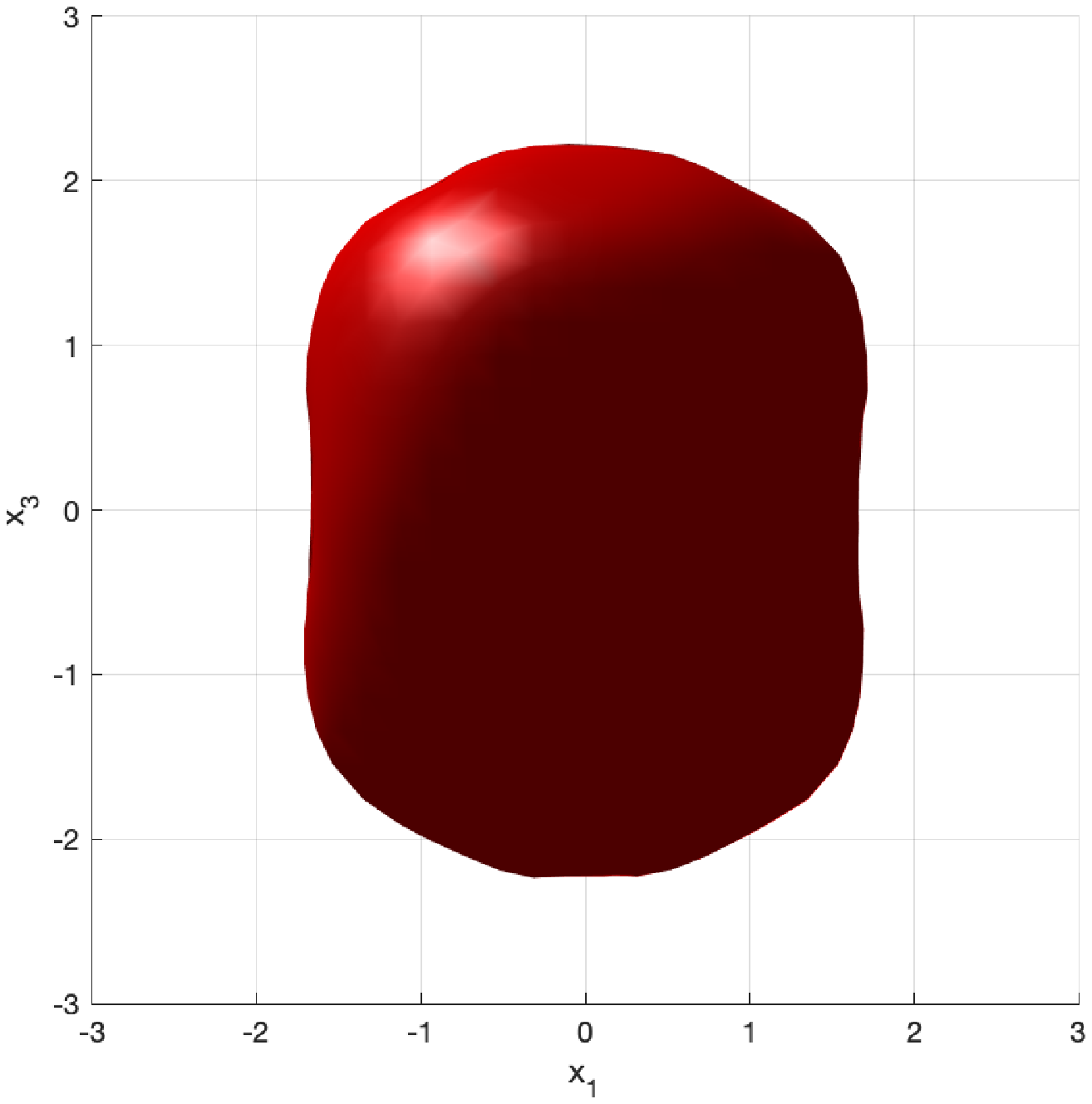}
\includegraphics[width=0.24\linewidth]{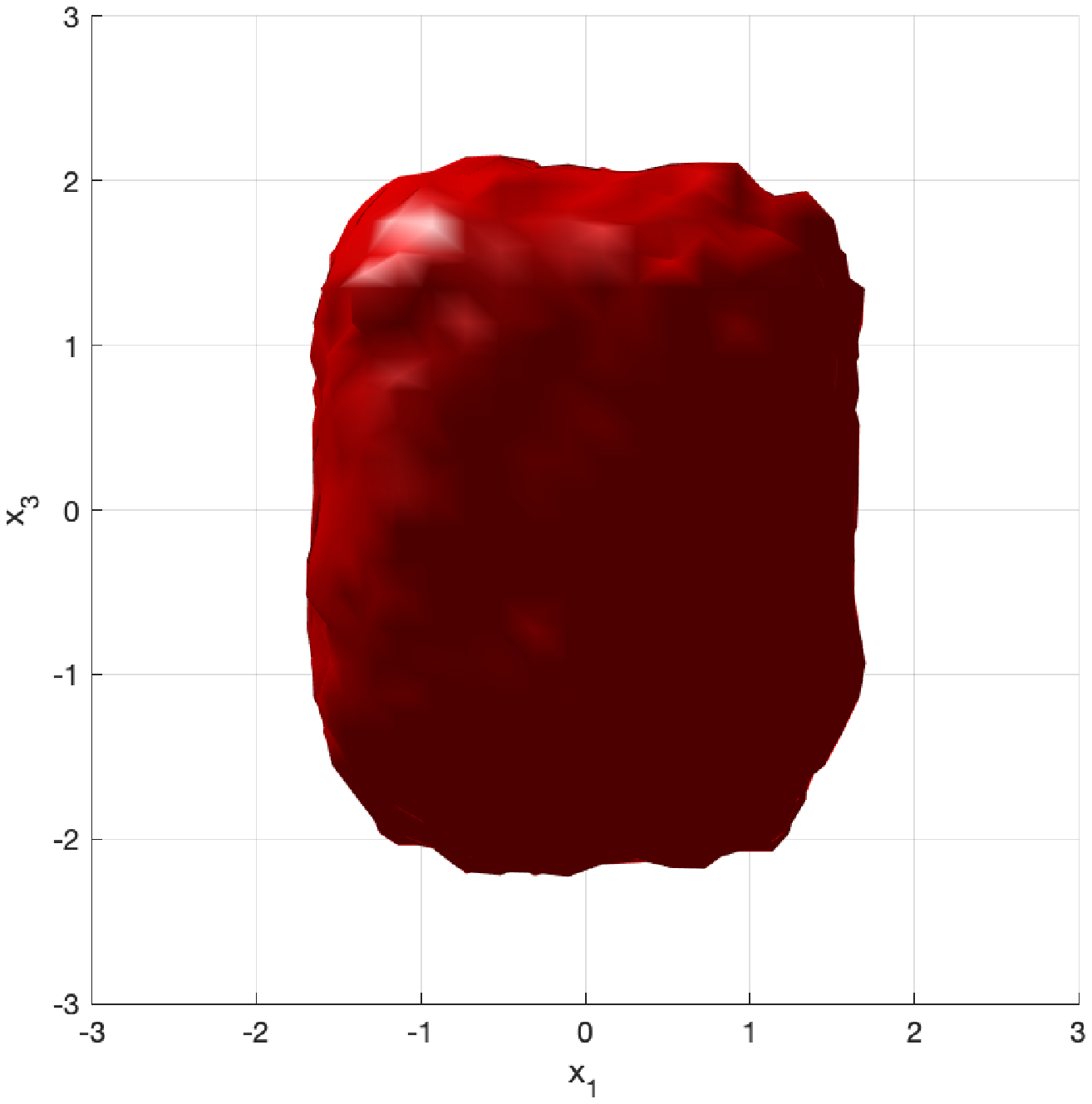}
     \caption{
Images of a cuboid cavity. First row: three dimensional view. Second row: $x_1 x_2-$cross section view. Third row: $x_2x_3-$cross section view. Last row: $x_1x_3-$cross section view. First column: exact geometry. Second column: image using a single polarization vector (iso-surface plot with iso-value -7). Third column: image using three polarization vectors (iso-surface plot with iso-value -7). Last column: image using three polarization vectors with noiseless data (iso-surface plot with iso-value -10).
     } \label{cuboid}
\end{figure}

\section{Summary} \label{Discussion}
We provide a complete study of the interior inverse electromagnetic scattering problem for a penetrable cavity. First, we prove that the homogeneous cavity is uniquely determined by the internal measurements. Our approach is based on the so-called {\it exterior transmission problem}, where such method allows us to consider a general background medium as we avoid constructing the Green's function. Second, we develop the linear sampling method to reconstruct the cavity and provide numerical examples to demonstrate the viability of the method. Finally, we mention several questions worth investigating in the future. Similar to the interior transmission problem, we expect that the exterior transmission eigenvalues form a discrete set and there exists infinitely many such eigenvalues. We expect that the exterior transmission eigenvalues provide information about the surrounding medium and such eigenvalues can be used in non-destructive material testing. Another future interest is to investigate the factorization method, the generalized linear sampling method, and other related robust imaging methods.

\section*{Acknowledgement} 
The work of FZ was supported by NSFC grant (11771068).

\section{Appendix}\label{Appendix}

{\bf Proof of Lemma \ref{ForwardMaxwellUnique}}:
\begin{proof}Assume $\vecE^s$ is such that (\ref{Scattered})-(\ref{SilverMuller}) holds with $\vecf=0$, it is sufficient to show that $\vecE^s=0$. Following exactly the same line of Theorem 2.1 in \cite{CaCo1}, we can directly obtain that  $\vecE^s=0$ in $\R^3 \backslash \overline{D}_1$. Since $A$ and $N$ have $C^1(\overline{D}_1 \backslash D)$ entries, we can extend $A$ and $N$ to $\tilde{A}$ and $\tilde{N}$ with $C^1(\R^3)$ entries. Since $\vecE^s=0$ in $\R^3 \backslash \overline{D}_1$ and $\vecE^s \in \vecH_{loc}(\mcurl, \R^3)$, then
\begin{eqnarray*}
(\nu \times \vecE^s)^- = 0 \quad &\mbox{on}& \quad \partial D_1, \\
(\nu \times A\mcurl \vecE^s)^- = 0 \quad &\mbox{on}& \quad \partial D_1,
\end{eqnarray*}
where $(\cdot)^-$ denotes the tangential traces of $\vecE^s|_{D_1}$ on $\partial D_1$. The above boundary conditions allow us to extend $\vecE^s|_{D_1 \backslash \overline{D}}$ by zero to $\tilde{\vecE}^s$ such that $\tilde{\vecE}^s \in \vecH(\mcurl,\R^3 \backslash \overline{D})$ and
$$
\mcurl (\tilde{A}\mcurl \tilde{\vecE}^s)- k^2 \tilde{N} \tilde{\vecE}^s = 0 \quad \mbox{in} \quad \R^3 \backslash \overline{D}.
$$
Since $\tilde{\vecE}^s=0$ in an open set in $\R^3 \backslash \overline{D}_1$, we have from unique continuation (Corollary 2.2 in \cite{O}) that $\vecE^s = \tilde{\vecE}^s =0$ in $D_1 \backslash \overline{D}$. A similar argument allows us to prove $\vecE^s$ vanish in $D$ and hence in $\R^3$. This completes the proof. 
\end{proof}

{\bf Proof of Lemma \ref{MaxwellTransmissionRegularity}}:
\begin{proof}
From Section \ref{Introduction}, we have that the forward problem  (\ref{Scattered})-(\ref{SilverMuller}) is equivalent to finding $\vecE \in \vecH_{loc}(\mcurl, \R^3 \backslash \overline{D})$ and $\vecE^s \in \vecH(\mcurl, D)$ such that
\begin{eqnarray}
\mcurl(A\mcurl \vecE) - k^2 N \vecE = 0 \quad &\mbox{in}& \quad \R^3 \backslash \overline{D}, \label{Appen 2.5 eqn 1}\\
\mcurl^2 \vecE^s - k^2  \vecE^s = 0 \quad &\mbox{in}& \quad D, \label{Appen 2.5 eqn 2}\\
\nu \times \vecE - \nu \times \vecE^s = \nu \times \vecE^i \quad &\mbox{on}& \quad \partial D, \label{Appen 2.5 eqn 3}\\
\nu \times  A\mcurl \vecE - \nu \times \mcurl \vecE^s = \nu \times \mcurl\vecE^i \quad &\mbox{on}& \quad \partial D, \label{Appen 2.5 eqn 4}\\
\lim\limits_{|\bx|\to \infty}\left(\mcurl \vecE^s\times \bx-ik|\bx|\vecE^s\right)=0, \label{Appen 2.5 eqn 5}
\end{eqnarray}
{where the first two equations are understood in the distributional sense, and the third and fourth equation hold in $\vecH^{-\frac{1}{2}}(\mdiv,\partial D)$.}

{Note that $\mdiv \vecE^s=0$ due to equation \eqref{Appen 2.5 eqn 2}, therefore we have that $\vecE^s \in \vecH(\mdiv, D)$ and $\mcurl^2\vecE^s \in \vecH(\mdiv, D)$. Therefore the normal traces $(\vecE^s \cdot \nu)|_{\partial D}$ and  $(\mcurl^2 \vecE^s \cdot \nu)|_{\partial D}$ belong to $H^{-1/2}(\partial D)$. Similarly $(N\vecE \cdot \nu)|_{\partial D}$ and  $(\mcurl (A\mcurl \vecE) \cdot \nu)|_{\partial D}$ belong to $H^{-1/2}(\partial D)$. Now from equations \eqref{Appen 2.5 eqn 1} -- \eqref{Appen 2.5 eqn 2}, we can derive that $(\mcurl^2 \vecE^s \cdot \nu)|_{\partial D} = k^2(\vecE^s \cdot \nu)|_{\partial D}$ and $(\mcurl (A\mcurl \vecE) \cdot \nu)|_{\partial D} = k^2 (N\vecE \cdot \nu)|_{\partial D}$ hold in $H^{-1/2}(\partial D)$.
Note that $(\nu \times  \mcurl \vecE^s)|_{\partial D}$ and $(\nu \times  A\mcurl \vecE)|_{\partial D}$ belong to $\vecH^{-\frac{1}{2}}(\mdiv,\partial D)$, {we can apply the surface divergence operator $\mdiv_{\partial D}$ to $(\nu \times  A\mcurl \vecE)|_{\partial D}$ and $(\nu \times  \mcurl \vecE^s)|_{\partial D}$ respectively, this allows us to derive that $\mdiv_{\partial D}  (\nu \times  \mcurl \vecE^s)|_{\partial D} = - (\mcurl^2 \vecE^s \cdot \nu)_{\partial D}$ and $\mdiv_{\partial D}  (\nu \times  A\mcurl \vecE)|_{\partial D} = - (\mcurl (A \mcurl \vecE) \cdot \nu)_{\partial D}$ hold in $H^{-1/2}(\partial D)$. Therefore  we can obtain} that $\mdiv_{\partial D}  (\nu \times  \mcurl \vecE^s)|_{\partial D} = - (\mcurl^2 \vecE^s \cdot \nu)|_{\partial D} = -k^2 (\vecE^s \cdot \nu)|_{\partial D}$ and $\mdiv_{\partial D}  (\nu \times  A\mcurl \vecE)|_{\partial D} = - (\mcurl (A \mcurl \vecE) \cdot \nu)|_{\partial D} = -k^2 (N \vecE \cdot \nu)|_{\partial D}$ in $H^{-1/2}(\partial D)$. Similarly $\mdiv_{\partial D}  (\nu \times  \mcurl \vecE^i)|_{\partial D} = - (\mcurl^2 \vecE^i \cdot \nu)|_{\partial D} = -k^2( \vecE^i \cdot \nu)|_{\partial D}$ in $H^{-1/2}(\partial D)$. Now we can derive from the above that $\mdiv_{\partial D}  (\nu \times  A\mcurl \vecE)|_{\partial D} -\mdiv_{\partial D}  (\nu \times  \mcurl \vecE^s)|_{\partial D}   = -k^2 \big((N\vecE \cdot \nu)|_{\partial D} - (\vecE^s \cdot \nu)|_{\partial D}\big)$.
Together with the  boundary condition \eqref{Appen 2.5 eqn 4} and $\mdiv_{\partial D}  (\nu \times  \mcurl \vecE^i)|_{\partial D}=-k^2( \vecE^i \cdot \nu)|_{\partial D}$, we can obtain that $(N\vecE \cdot \nu)|_{\partial D} - (\vecE^s \cdot \nu)|_{\partial D} = (\vecE^i \cdot \nu)|_{\partial D}$ in $H^{-1/2}(\partial D)$. Then if $\vecE^i \in \vecH_{loc}^1(\R^3 \backslash \overline{D})$, we have from the above identity and equation \eqref{Appen 2.5 eqn 3} that}
$$ \label{MaxwellTransmissionRegularityEqn1}
(\nu \times \vecE)|_{\partial D} - (\nu \times \vecE^s)|_{\partial D}=(\nu \times \vecE^i)|_{\partial D} \in \vecH_t^{\frac{1}{2}}(\partial D),
$$  
and
$$ (N\vecE \cdot \nu)|_{\partial D} - (\vecE^s \cdot \nu)|_{\partial D} = (\vecE^i \cdot \nu)|_{\partial D} \in H^{\frac{1}{2}}(\partial D).
$$

Let $\chi \in C_0^\infty (B_R)$ be a cutoff function that $\chi=1$ near $\partial D$ and has compact support in $D_1$. We remark that $A$ and $N$ are continuous in $D_1 \backslash \overline{D}$ so that $\mdiv(N \chi\vecE)= \chi \mdiv (N\vecE) + \nabla \chi \cdot (N\vecE)$ in $B_R  \backslash \overline{D}$. Taking the divergence of equation \eqref{Appen 2.5 eqn 1}, we can obtain $\mdiv (N\vecE) = 0$, then we have that $\mdiv(N \chi\vecE)= \nabla \chi \cdot (N\vecE)$. Now it is directly verified that
$$
\chi \vecE \in \vecH(\mcurl, B_R \backslash \overline{D}),~~ \vecE^s \in \vecH(\mcurl, D), ~~ \mdiv (N\chi \vecE) \in \vecL^2(B_R \backslash \overline{D}), ~~ \mdiv (\vecE^s) \in \vecL^2(D),
$$
$$
(\nu \times \chi\vecE)|_{\partial D} - (\nu \times \vecE^s)|_{\partial D}=(\nu \times \vecE^i)|_{\partial D} \in \vecH_t^{\frac{1}{2}}(\partial D),
$$
and
$$(N \chi\vecE \cdot \nu)|_{\partial D} - (\vecE^s \cdot \nu)|_{\partial D} = (\vecE^i \cdot \nu)|_{\partial D} \in H^{\frac{1}{2}}(\partial D).
$$
Now we apply Theorem 2.6 and Remark 2.7 in \cite{CaCo1} to get $\vecE^s \in \vecH^1(D)$, $\chi \vecE \in \vecH^1(B_R \backslash \overline{D})$, and (as in equation (2.41) in \cite{CaCo1})
\begin{eqnarray*}
&&\|\vecE^s\|_{\vecH^1(D)} +  \|\chi \vecE\|_{\vecH^1(D_1 \backslash \overline{D})} \\
&\le& C \left(  \|\vecE^s\|_{\vecH(\mcurl, D)}  +  \|\vecE\|_{\vecH(\mcurl, B_R \backslash \overline{D})}+\|\nu \times \vecE^i\|_{\vecH_t^{\frac{1}{2}}(\partial D)}+ \|\vecE^i \cdot \nu\|_{H^{\frac{1}{2}}(\partial D)} \right).
\end{eqnarray*}
Since $\vecE=\vecE^i + \vecE^s$, then
$$
\|\vecE^s\|_{\vecH^1(D)} +  \|\chi \vecE^s\|_{\vecH^1(D_1 \backslash \overline{D})} \le C \left(  \|\vecE^s\|_{\vecH(\mcurl, D)}  +  \|\vecE^s\|_{\vecH(\mcurl, B_R \backslash \overline{D})}+\|\vecE^i\|_{\vecH^1(B_R \backslash \overline{D})} \right).
$$
Similarly, with the help of the cutoff function we can show that  $(1-\chi)\vecE \in \vecH^1(D_1 \backslash \overline{D}))$, $\vecE^s \in \vecH^1(B_R \backslash \overline{D}_1)$, and
\begin{eqnarray*}
&&\|(1-\chi)\vecE^s\|_{\ \vecH^1(D_1 \backslash \overline{D}))} +  \| \vecE^s\|_{\vecH^1(B_R \backslash \overline{D}_1)} \\
&\le& C \left(  \|\vecE^s\|_{\vecH(\mcurl, D)}  +  \|\vecE^s\|_{\vecH(\mcurl, B_R \backslash \overline{D})}+\|\vecE^i\|_{\vecH^1(B_R \backslash \overline{D})} \right).
\end{eqnarray*}
Summing above two estimates yields the lemma. 
\end{proof}

\section*{References}

\end{document}